\numberwithin{equation}{section}
\numberwithin{figure}{section}
\theoremstyle{plain}
\newtheorem{thm}{Theorem}[section]
\newtheorem*{thm*}{Theorem}
\theoremstyle{definition}
\newtheorem{definition}[thm]{Definition}
\newtheorem*{claim}{Claim}
\theoremstyle{plain}
\newtheorem{proposition}[thm]{Proposition}
\theoremstyle{remark}
\newtheorem{remark}[thm]{Remark}
\theoremstyle{plain}
\newtheorem{corollary}[thm]{Corollary}
\theoremstyle{plain}
\newtheorem{lemma}[thm]{Lemma}
\theoremstyle{remark}
\newtheorem*{rem*}{Remark}
\newcommand{\abs}[1]{\left\vert#1\right\vert}
\newcommand{\set}[1]{\left\{#1\right\}}
\newcommand{\Real}{\mathbb{R}}
\newcommand{\Cpx}{\mathbb{C}}
\newcommand{\eps}{\varepsilon}
\newcommand{\half}{\frac{1}{2}}
\newcommand{\sph}{\mathbb{S}}
\newcommand{\To}{\rightarrow}
\newcommand{\pd}{\partial}
\newcommand{\hF}[1]{\mathsf{F}\left(#1\right)}
\newcommand{\Jp}{\mathsf{P}}
\newcommand{\acloc}{\mathfrak{AC}_{\text{loc}}}
\newcommand{\calL}{\mathcal{L}}
\title[Martin compactification of a negatively curved surface]{Martin compactification of a complete surface with negative curvature}
\author{Huai-Dong Cao}
\address{
Huai-Dong Cao \\ Department of Mathematics, University of Macau, Macau, China \&
Department of Mathematics, Lehigh University \\
Bethlehem, PA, 18015, USA}
\email{huc2@lehigh.edu}
\author{Chenxu He}
\address{Chenxu He \\
Department of Mathematics, University of Oklahoma \\
Norman, OK 73072}
\email{che@math.ou.edu}
\subjclass[2000]{53C21, 58G20, 31C35, 33C05, 34B25\\
The research of the first author was partially supported by NSF Grant DMS-0909581.}
\begin{document}
\maketitle

\begin{abstract}

In this paper we consider the Martin compactification, associated with the operator $\calL = \Delta -1$, of a complete non-compact surface $\Sigma^2$ with negative curvature. In particular, we investigate positive eigenfunctions with eigenvalue one of the Laplace operator $\Delta$ of $\Sigma^2$ and prove a uniqueness result: such  eigenfunctions are unique up to a positive constant multiple if they vanish on the part of the geometric boundary $S_\infty(\Sigma^2)$ of $\Sigma^2$ where the curvature is bounded above by a negative constant, and satisfy some growth estimate on the other part of $S_\infty(\Sigma^2)$ where the curvature approaches zero. This uniqueness result plays an essential role in our recent paper \cite{CaoHesteady3d} in which we prove an infinitesimal rigidity theorem for deformations of certain three-dimensional collapsed gradient steady Ricci soliton with a non-trivial Killing vector field. 
\end{abstract}

\tableofcontents

\section{Introduction}

In this paper, we consider the Martin compactification of a complete non-compact surface $(\Sigma^2, ds^2)$ with  negative curvature. In particular, we investigate positive eigenfunctions with eigenvalue one of the Laplace operator $\Delta$ of $(\Sigma^2, ds^2)$ and prove a certain uniqueness result. The problem arises in our study of deformations of three-dimensional collapsed gradient steady Ricci solitons \cite{CaoHesteady3d}.

A classical result due to G. Herglotz \cite{Herglotz} says that any positive harmonic function $u$ on the unit disk $\mathbb{D}^2$ has an integral representation
\begin{equation*}
u (x) = \int_{\sph^1} K(x, Q)d\sigma(Q), 
\end{equation*}
where $K(x,Q)$ is the Poisson kernel of $\mathbb{D}^2$ and $\sigma$ is a finite positive Borel measure. A fundamental generalization of the above formula to bounded domains in $\Real^n$  was given by R. Martin in \cite{Martin}. He showed that an analogue of the above representation formula holds in complete generality, where the integral is taken over an ideal boundary, called the Martin boundary, defined in terms of the limiting behavior of a Green's function. Later, Martin's results were extended to the second order elliptic differential operators on complete Riemannian manifolds, see, e.g., \cite{Taylor} and the references therein. 

On a Riemannian manifold of non-positive curvature, there is a well-known compactification by attaching its geometric boundary that is determined by the asymptotic behavior of geodesics at infinity, see \cite{EberleinONeill}. A celebrated result by M. Anderson and R. Schoen \cite{AndersonSchoen} states that on a negatively curved manifold $(M^n, g)$, its Martin boundary with respect to the Laplace operator is homeomorphic to its geometric boundary, provided the sectional curvature is pinched between two negative constants. Thus the Martin compactification of such a manifold associated to the Laplace operator coincides with its geometric compactification.  Anderson-Schoen's result has been generalized by A. Ancona in \cite{Ancona} to weakly coercive elliptic operators $L$ using different techniques. In \cite{Ballmann}, W. Ballmann constructed examples of non-positively curved manifolds containing flats of dimension $k\ge 2$ such that the Martin compactification agrees with the geometric compactification.

The Cartan-Hadamard surface $(\Sigma^2, ds^2)$ we are concerned with is diffeomorphic to the upper half-plane $\Real \times (0, \infty)$ with the length element given by
\begin{equation}\label{eqn:ds2surface}
ds^2 = \frac{e^{4y} + 10 e^{2y} + 1}{4(e^{2y}-1)^2} \left(dx^2 + dy^2\right)
\end{equation}
for $(x,y)\in \Real \times (0, \infty)$. We are especially interested in characterizing non-negative functions $W=W(x, y)$ on $\Sigma^2$ such that 
\begin{equation}\label{eqn:PDEW}
\left\{
\begin{array}{rcl}
W_{xx} + W_{yy} -P(y) W & = & 0 \\
& & \\
W(x, 0)  =  0 \quad \& \quad \pd_y W - \half \coth(y) W  & \geq  &0,
\end{array}
\right.
\end{equation}
where
\begin{equation}\label{eqn:P}
P(y) = \frac{e^{4y} + 10e^{2y} + 1} {4(e^{2y}-1)^2}. 
\end{equation}
Note that the Laplace operator of $(\Sigma^2, ds^2)$ is given by $\Delta=P^{-1}(y) (\pd_x^2 + \pd_y^2),$ hence such a function $W$ is in fact an eigenfunction of $\Delta$ with eigenvalue one, 
$$\Delta W= W.$$
This problem of characterizing non-negative solutions of (\ref{eqn:PDEW}) arises in our study of the infinitesimal rigidity of deformations of the collapsed 3D cigar soliton $N^2\times \Real$, the product of Hamilton's cigar soliton $N^2$ and the real line $\Real$ with the product metrics, as follows: consider any one-parameter family of complete  three-dimensional gradient steady Ricci solitons $(M^3(t),g(t), f(t))$ ($0\leq t < \eps$), with $(M^3(0), g(0), f(0))$ being the 3D cigar soliton $N^2 \times \Real$,  satisfying the following two conditions: 
\begin{itemize}
\item the metric $g(t)$ admits  a non-trivial Killing vector field for all $t\in [0, \eps)$,  
\item the scalar curvature $R(t)$ of $g(t)$ attains its maximum at some point on $M^3(t)$ for each $t\in [0, \eps)$.
\end{itemize}
Then, the first variation of the sectional curvatures of $(M^3(t),g(t), f(t))$ produces a certain non-negative function $W=W(x, y)$ on $\Sigma^2$ satisfying (\ref{eqn:PDEW}). The infinitesimal rigidity of the deformation essentially reduces to proving the uniqueness of such nonnegative eigenfunctions up to a positive constant multiple; see \cite{CaoHesteady3d} for the details. This led us to consider the Martin compactification of $(\Sigma^2, ds^2)$ associated with the operator $\calL = \Delta -1$.

It turns out that the Gauss curvature of $(\Sigma^2, ds^2)$ is negative, bounded from below, but approaches zero along some paths to infinity. So we cannot apply the results of Anderson-Schoen \cite{AndersonSchoen} and Ancona \cite{Ancona} in this case. Nevertheless, we shall see that the Martin boundary with respect to the operator $\calL$ is the same as the geometric boundary of $(\Sigma^2, ds^2)$.  Our first results are the following Martin and geometric compactifications of $\Sigma^2$.

\begin{thm}\label{thm:Martinboundary}
The Martin compactification of $\Sigma^2$ associated with the operator $\calL = \Delta -1$ is homeomorphic to the closed half disk $\hat \Sigma = \set{(u,v) \in \Real^2: u^2 + v^2 \leq 1, v\geq 0}$. On the Martin boundary $\pd \hat \Sigma$ we have
\begin{enumerate}
\item the real line $ \set{(x,y) \in \Real^2: y = 0}$ is identified with $\set{ -1<u<1, v=0}$; 
\item the $y$-axis with $y \To \infty$ is identified with the point $(0,1)$;
\item the asymptotic ray $y = x \tan \theta$ is identified with the point $\omega_\theta$ on the semi-circle
\[
\set{\omega_\theta = (\cos\theta,\sin \theta) : \theta \in (0, \pi/2)\cup (\pi/2, \pi)} \subset \pd \hat \Sigma.
\]
\end{enumerate}
\end{thm}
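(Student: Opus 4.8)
The plan is to exploit that the coefficient $P$ depends only on $y$, so that $\calL W=0$, i.e. $W_{xx}+W_{yy}-P(y)W=0$, separates. First I would rewrite the potential as
\[
P(y)=\frac14+\frac{3}{4\sinh^2 y},
\]
which makes both ends transparent: $P(y)\to\frac14$ as $y\to\infty$ (the flat, curvature-to-zero end) and $P(y)\sim\frac{3}{4y^2}$ as $y\to0^+$ (the asymptotically hyperbolic end, where the curvature tends to a negative constant). Setting $W=e^{i\lambda x}\phi_\lambda(y)$ reduces the equation to
\[
\phi_\lambda''=\Big(\lambda^2+\tfrac14+\frac{3}{4\sinh^2 y}\Big)\phi_\lambda ,
\]
an associated Legendre equation which, in the variable $\cosh y$, has solutions given by conical functions of degree $-\tfrac12+i\lambda$ and order $1$, reducing to complete elliptic integrals $\EK{k}$ and $\EE{k}$ when $\lambda=0$. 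The indicial roots $\tfrac32,-\tfrac12$ at $y=0$ select, up to scale, the recessive solution $\phi_\lambda^-\sim y^{3/2}$ (compatible with the boundary value in \eqref{eqn:PDEW}), while at $y=\infty$ there is a recessive solution $\phi_\lambda^+\sim e^{-\mu(\lambda)y}$ with $\mu(\lambda)=\sqrt{\lambda^2+\tfrac14}$ and a dominant one growing like $e^{\mu(\lambda)y}$; the explicit special-function formulas supply the connection coefficients between the two normalizations.

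Next I would construct the Green's function of $\calL$ by combining the Fourier transform in $x$ with the Sturm--Liouville Green's function in $y$,
\[
G\big((x,y),(x',y')\big)=\frac{1}{2\pi}\int_{\Real}e^{i\lambda(x-x')}\,\frac{\phi_\lambda^-(y_<)\,\phi_\lambda^+(y_>)}{\mathcal W(\lambda)}\,d\lambda ,
\]
where $\mathcal W(\lambda)$ is the Wronskian, computed in closed form from the connection coefficients above. Writing $K(z,w)=G(z,w)/G(z_0,w)$ for a fixed basepoint $z_0$, the Martin boundary is read off from the limits of $K(\cdot,w)$ as $w=(x',y')$ leaves every compact set, and I expect exactly three regimes matching the three pieces of $\pd\hat\Sigma$. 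When $y'\to0$ with $x'\to x_0$, the factor $\phi_\lambda^-(y')\sim (y')^{3/2}$ cancels in the ratio and the asymptotically hyperbolic structure produces a Poisson-type minimal solution $K_{x_0}$; translation invariance in $x$ gives $K_{x_0}(x,y)=K_0(x-x_0,y)$, so these fill the open diameter $\set{-1<u<1,\,v=0}$. When $x'$ stays bounded and $y'\to\infty$, symmetry in $x$ forces the $x$-independent solution $\phi_0(y)$, the apex $(0,1)$. Along a ray $y'=x'\tan\theta$, a saddle-point analysis of the $\lambda$-integral, with the saddle on the imaginary axis at $\lambda=-\tfrac{i}{2}\cos\theta$ dictated by the exponent $-i\lambda x'-\mu(\lambda)y'$, selects the solution $W_\theta$ growing like $e^{\frac12(x\cos\theta+y\sin\theta)}$ and yields the point $\omega_\theta=(\cos\theta,\sin\theta)$; as $\theta\to0^+,\pi^-$ this saddle runs into the branch points $\lambda=\pm\tfrac{i}{2}$ of $\mu(\lambda)$, matching the corners $(\pm1,0)$ where the diameter meets the semicircle.

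Finally I would assemble the compactification: verify that the above limits exhaust all Martin kernel limits (no spurious boundary points), that the boundary functions $\set{K_{x_0}}\cup\set{W_\theta}$ are pairwise distinct and minimal, and that the resulting parametrization by $(x_0,\theta)$ is a homeomorphism onto the boundary of the half-disk, so that $\hat\Sigma$ is the closed half-disk $\set{u^2+v^2\le1,\,v\ge0}$ with the three stated identifications.

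The step I expect to be the main obstacle is the ray regime: precisely there the curvature tends to zero, so the methods of Anderson--Schoen \cite{AndersonSchoen} and Ancona \cite{Ancona} do not apply and the Martin kernel must be extracted directly from the oscillatory integral. This demands asymptotics of the conical solutions $\phi_\lambda^\pm$ and of $\mathcal W(\lambda)$ that hold uniformly as $\lambda$ and $y'$ tend to infinity together, a controlled contour deformation onto the moving saddle $\lambda=-\tfrac{i}{2}\cos\theta$ (whose collision with the branch points at $\theta\to0,\pi$ governs the delicate behavior near $(\pm1,0)$), and a careful matching of this regime with the hyperbolic regime at $y=0$ to confirm that the boundary parametrization is continuous across the whole of $\pd\hat\Sigma$.
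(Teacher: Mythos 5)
Your plan is sound and follows the same overall architecture as the paper: rewrite $P(y)=\frac14+\frac{3}{4\sinh^2 y}$, separate variables, solve the resulting Legendre-type equation explicitly, express the Green's function of $\calL$ as an explicit integral, extract the Martin kernels from its asymptotics in the three regimes ($\eta\To 0$, vertical direction, rays $\eta=\xi\tan\theta$), and assemble the closed half disk. The one genuine difference is the choice of integral representation: you put the Fourier transform in $x$ outside and the Sturm--Liouville resolvent in $y$ inside, whereas the paper starts from Titchmarsh's contour formula $G=\frac{1}{2\pi i}\int G_1(x,\xi,-\lambda)G_2(y,\eta,\lambda)\,d\lambda$ and deforms around the branch cut of $G_2$, landing on the dual representation (eigenfunction expansion in the $y$-variable, explicit exponential kernel in $x$). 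Correspondingly, your saddle for the ray $y=x\tan\theta$ sits at $\lambda=-\frac{i}{2}\cos\theta$ in the $x$-Fourier variable, while the paper's sits at $z_0=-\frac{i}{2}\sin\theta$ in the $y$-spectral variable; both produce the same kernel $e^{\frac{\cos\theta}{2}x}$ times a $y$-profile growing like $e^{\frac{\sin\theta}{2}y}$. Your route spares the contour gymnastics of Section 4 and the spectral theorem of Appendix A, but requires asymptotics of the $y$-solutions and of the Wronskian that are uniform in $\lambda$ near the moving saddle, which is comparable work; the delicate matching at the corners $(\pm1,0)$ that you flag is exactly the content of Lemma \ref{lem:Kxixomegalim}.

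Two cautions. First, the regime $\eta\To 0$ is not handled purely computationally in the paper: the limiting integral in Theorem \ref{thm:asymGeta0} diverges at $\xi=x$, and minimality of the points $\omega_\xi$ together with the vanishing of $K(\cdot,\omega_\xi)$ at all other boundary points is obtained not from the formula but from Ancona's local theory on the strip $\Real\times(0,1]$ (where the curvature is pinched between two negative constants), combined with Taylor's characterization of the Martin compactification via continuity and separation of the extended kernels. Your closing step ``verify \ldots minimal'' needs some such mechanism; kernel asymptotics alone do not yield minimality. Second, a small bookkeeping slip: because of the $+\frac14$ in the potential, the relevant Legendre solutions in $\cosh y$ have order $1$ and real degree $-\frac12+\sqrt{\lambda^2+\frac14}$, not the conical degree $-\frac12+i\lambda$ (the solutions are recessive/dominant in $y$, not oscillatory), and at $\lambda=0$ they are elementary, namely $e^{y/2}(e^{2y}-1)^{-1/2}$ and $(e^{2y}+1)e^{-y/2}(e^{2y}-1)^{-1/2}$; the complete elliptic integrals occur instead at the branch points $\lambda=\pm\frac{i}{2}$. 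Neither point affects the strategy.
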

\begin{remark}
\begin{enumerate}[(a)]
\item Each $\omega_\theta$ in case (3) can be approached by geodesic that is asymptotic to the ray $y = x\tan\theta$ as $x\To \infty$. 
\item Under the topology in the Martin compactification, we have 
\[
\lim_{\theta \To 0} \omega_\theta = (1,0) \quad \text{and}\quad \lim_{\theta \To \pi} \omega_\pi = (-1,0).
\]
These two points can be approached by geodesics which are asymptotic to $y = \log\abs{x}$ as $\abs{x}\To \infty$.
\item We show that $\pd\hat \Sigma$ is the minimal Martin boundary and determine the kernel function $K(\cdot, \omega)$, see Proposition \ref{prop:Martinkernel} and Theorem \ref{thm:MartincompSigma}.
\end{enumerate}
\end{remark}

\begin{thm}\label{thm:geomcompactificationintro}
The geometric compactification $\tilde{\Sigma}$ of $\Sigma^2$ with the metric in (\ref{eqn:ds2surface}) is homeomorphic to the Martin compactification $\hat \Sigma$. 
\end{thm}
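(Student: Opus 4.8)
The plan is to exhibit a homeomorphism $\Phi:\tilde\Sigma\to\hat\Sigma$ that restricts to the identity on the common interior $\Sigma^2$ and matches the boundary identifications of Theorem \ref{thm:Martinboundary}. Since $(\Sigma^2, ds^2)$ is a complete, simply connected surface with negative Gauss curvature, it is a Cartan--Hadamard surface, so by the Eberlein--O'Neill construction \cite{EberleinONeill} the geometric compactification $\tilde\Sigma = \Sigma^2\cup S_\infty$ is homeomorphic to a closed $2$-disk, where $S_\infty$ is the set of asymptote classes of unit-speed geodesic rays endowed with the cone topology. As $\hat\Sigma$ is also a closed disk and both compactifications contain $\Sigma^2$ as interior, it suffices to show that a divergent sequence in $\Sigma^2$ converges in the cone topology to the class of a ray with a prescribed endpoint if and only if it converges in the Martin topology to the corresponding point of $\pd\hat\Sigma$ listed in Theorem \ref{thm:Martinboundary}; the extension $\Phi$ is then a continuous bijection between compact Hausdorff spaces, hence a homeomorphism.

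The geometric input is a complete description of the geodesics, obtained from the Killing field $\pd_x$. Since $P$ depends only on $y$, every unit-speed geodesic has the Clairaut first integral $c = P(y)\,\dot x$, and the unit-speed condition $P(\dot x^2 + \dot y^2)=1$ gives $\frac{dx}{dy} = \pm\frac{c}{\sqrt{P(y)-c^2}}$ and $\frac{ds}{dy} = \frac{P(y)}{\sqrt{P(y)-c^2}}$. Writing $t = e^{2y}$ one has $P(y) = \frac14\big(1 + \frac{12t}{(t-1)^2}\big)$, which is strictly decreasing from $P(0^+)=+\infty$ to $P(\infty)=\frac14$, with $P(y)\sim \frac{3}{4y^2}$ as $y\To 0$ and $P(y)-\frac14 \sim 3e^{-2y}$ as $y\To\infty$. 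I would then classify the rays by $c$: (i) if $\abs c>\half$ there is a unique turning height $y_0$ with $P(y_0)=c^2$, the ray is confined to $0<y\le y_0$, and since $dx/dy\To 0$ as $y\To 0$ both ends descend to finite $x$-limits on the line $y=0$; (ii) if $\abs c<\half$ and $c\neq 0$ the ray rises monotonically, its lower end reaching a finite $x$-limit on $y=0$ while $dx/dy\To \frac{c}{\sqrt{1/4-c^2}}$, so it is asymptotic to the ray $y=x\tan\theta$ with $\tan\theta = \frac{\sqrt{1/4-c^2}}{c}$; (iii) if $c=0$ the ray is a vertical line $x=\text{const}$; and (iv) if $\abs c=\half$, the estimate $P-\frac14\sim 3e^{-2y}$ forces $y\sim\log\abs x$. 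These four regimes furnish candidate endpoints on $\pd\hat\Sigma$: points of the diameter $\set{-1<u<1,\ v=0}$ in case (i) and the lower ends of (ii); the arc points $\omega_\theta$, $\theta\in(0,\pi/2)\cup(\pi/2,\pi)$, in case (ii); the top $(0,1)$ in case (iii); and the corners $(\pm1,0)$ in case (iv).

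Next I would verify that the asymptote (bounded-distance) relation groups the rays exactly as above. In the asymptotically flat end the metric tends to $\frac14(dx^2+dy^2)$, so two geodesics sharing an asymptotic direction stay at bounded distance; hence all vertical rays lie in a single class, collapsing to the one point $(0,1)$, and two geodesics asymptotic to the same ray $y=x\tan\theta$ share the endpoint $\omega_\theta$, while distinct $\theta$ give distinct endpoints. Conversely, because $P\sim \frac{3}{4y^2}$ near $y=0$ the metric is asymptotically hyperbolic there, so rays with distinct $x$-limits on $y=0$ diverge and determine distinct diameter points. Continuous dependence of the solution of $\frac{dx}{dy}=\pm\frac{c}{\sqrt{P-c^2}}$ on $c$ then yields continuity of $\theta\mapsto\omega_\theta$, together with the limits $\omega_\theta\To(0,1)$ as $\theta\To\pi/2$ and $\omega_\theta\To(\pm1,0)$ as $\theta\To 0,\pi$, the latter matching the case (iv) geodesics $y\sim\log\abs x$ into the corners where the arc meets the diameter. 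This identifies $S_\infty$ setwise and topologically with $\pd\hat\Sigma$ and completes the construction of $\Phi$.

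I expect the main obstacle to be the asymptotically flat end $y\To\infty$, where the Gauss curvature degenerates to zero. The sharp pinching hypotheses behind the usual identification of geometric and analytic boundaries fail there, so the bounded-distance estimates, the collapse of all vertical rays to the single point $(0,1)$, and the delicate borderline behavior $y\sim\log\abs x$ of the rays $\abs c=\half$ limiting onto the corners $(\pm1,0)$ must all be extracted by hand from the explicit ODE $\frac{dx}{dy}=\pm\frac{c}{\sqrt{P(y)-c^2}}$ and the precise asymptotics of $P$; controlling these uniformly as $c\To\pm\half$ is the technical heart of the argument.
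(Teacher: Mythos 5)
Your proposal is correct and follows essentially the same route as the paper: classify all geodesics of the warped metric via the conserved quantity $P(y)\,\dot x$ (your Clairaut constant $c$ is, up to normalization, exactly the first integral the paper extracts from the geodesic equation, and your case division according to the comparison of $\abs{c}$ with $\tfrac12$ is precisely the paper's trichotomy on the sign of $3-m^2\sinh^2 a$), identify $S_\infty(\Sigma)$ with the boundary of the half disk accordingly, and then match against the Martin boundary description of Theorem \ref{thm:Martinboundary}. The only real difference is cosmetic --- you read off the four regimes from the asymptotics of $P$ instead of the paper's closed-form integrations of $dx/dy$ --- and your flagging of the borderline $\abs{c}=\tfrac12$ geodesics and the asymptotically flat end as the delicate points is exactly where the paper's argument does its work.
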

The detailed description of  $\tilde{\Sigma}$ is given in Theorem \ref{thm:geometriccompactification}.

\begin{remark}
In \cite{CaffarelliLittman}, using an elementary method, L. A. Caffarelli and W. Littman showed that for any positive solution $u$ to the equation $\left(\Delta -1\right)u  = 0 $ on the Euclidean space $\Real^n$ there exists a unique non-negative Borel measure $\mu$ on the unit sphere $\sph^{n-1}$ such that  
\begin{equation*}
u(x) = \int_{\sph^{n-1}} e^{x\cdot \omega}d\mu(\omega).
\end{equation*}
It follows that the (minimal) Martin boundary of $\Real^n$ with respect to $\Delta -1$ is $\sph^{n-1}$.  This is similar to the semi-circle part of $\pd \hat \Sigma$ when $n=2$; note that, on $\Sigma^2$, along the ray $y = x\tan \theta$ the Gauss curvature approaches zero as $\abs{x}\To \infty$. The geometric boundary of $\Real^n$ with the flat metric is also $\sph^{n-1}$, i.e., any point on the Martin boundary can be reached by a geodesic.
\end{remark}

\begin{remark}\label{rem:Laplace}
Since $\left(\Sigma^2,ds^2\right)$ is conformal to the hyperbolic plane, the Martin boundary of $\Sigma^2$ associated with the Laplace operator $\Delta$ is given by the union of the real line $\set{y=0}$ and $\set{\infty}$, the point at infinity, thus is different from the geometric boundary $\pd \tilde{\Sigma}$. Note that the Laplace operator $\Delta$ has zero as the bottom of the $L^2$-spectrum, i.e.,  $\lambda_1(\Sigma^2) =0$ (see Remark \ref{rem:bottomL2spectrum}), and is not weakly coercive.   
\end{remark}

The Martin compactification of a complete Riemannian manifold $M$ with respect to an operator $L$ is also related to the Dirichlet problem at infinity, i.e., given a continuous function $f$ on the geometric boundary $S_\infty(M)$  of $M$, whether there is a unique $L$-harmonic function $w$ on $M$ such that $w =f$ on $S_\infty(M)$. When $L$ is the Laplace operator $\Delta$, the Dirichlet problem at infinity is always solvable if $M$ has negatively pinched curvature $-b^2<K_M<-a^2$ (which was first proved independently by M. Anderson \cite{Anderson} and D. Sullivan \cite{Sullivan}), or if $M$ is one of the examples in \cite{Ballmann}. Note that in both cases, as we mentioned before, there holds the stronger conclusion that Martin and geometric compactifications are homeomorphic.  We remark that the Dirichlet problem at infinity for $\Delta$ on a symmetric space $M$ of noncompact type was investigated by H. F\"{u}rstenberg \cite{Fuerstenberg}; in particular, he showed that the problem can be solved if and only if $M$ has rank one. See also the earlier work of L.-K. Hua \cite{Hua1, Hua2, Hua3} on bounded symmetric domains. For more general Cartan-Hadamard manifolds of rank one (in the sense of \cite{Ballmann1985, Burns}), the solvability of the Dirichlet problem at infinity was proved by Ballmann \cite{BallmannDirichlet}. Moreover, the Poisson integral representation formula was established by Ballmann-Ledrappier \cite{BallmannLedrappier}.  Meanwhile, H. I. Choi \cite{Choi}, Ding-Zhou \cite{DingZhou},  and E. P. Hsu \cite{Hsu} have shown that the Dirichlet problem at infinity for the Laplace operator $\Delta$ is solvable on certain negatively curved manifolds whose curvature approaches zero with certain rate.  Very recently, R. Neel \cite{Neel} has shown that the asymptotic Dirichlet problem for the Laplace operator on a Cartan-Hadamard surface is solvable under the curvature condition $K \le - (1 + \epsilon)/(r^2 \log r)$ (in polar coordinates with respect to a pole) outside of a compact set, for some $\epsilon > 0$.

However, in our case, Theorem \ref{thm:geomcompactificationintro} and Remark \ref{rem:Laplace} imply the following

\begin{corollary}\label{cor:Dirichlet}
The Dirichlet problem at infinity for the Laplace operator $\Delta $ on $\Sigma^2$ is not always solvable. 
\end{corollary}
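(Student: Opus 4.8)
The plan is to argue by contradiction, exploiting the two facts already in hand: by Theorem \ref{thm:geomcompactificationintro} the geometric boundary $S_\infty(\Sigma^2)$ is the boundary $\pd\hat\Sigma$ of the half-disk, whereas by Remark \ref{rem:Laplace} the Martin boundary of $\Sigma^2$ with respect to $\Delta$ is only $\set{y=0}\cup\set{\infty}$. The essential point is that, because $ds^2$ is conformal to the flat metric and harmonicity is conformally invariant in dimension two, a function is $\Delta$-harmonic on $(\Sigma^2,ds^2)$ if and only if it is Euclidean-harmonic on the flat upper half-plane $\Real\times(0,\infty)$ (indeed $\Delta = P^{-1}(\pd_x^2+\pd_y^2)$ with $P>0$). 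The whole semicircular arc of $\pd\hat\Sigma$ collapses to the single flat point at infinity, so bounded $\Delta$-harmonic functions cannot separate its points; I would turn this into a concrete failure of solvability.

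Concretely, I would choose continuous data $f$ on $\pd\hat\Sigma$ that vanishes on the closed diameter (the part identified with $\set{y=0}$) but satisfies $f(0,1)=1$ at the top vertex, which by Theorem \ref{thm:Martinboundary}(2) is the endpoint of the $y$-axis as $y\To\infty$; such an $f$ exists by Urysohn's lemma since these are disjoint closed subsets of $\pd\hat\Sigma$. Suppose, for contradiction, that the Dirichlet problem at infinity were solvable for this $f$, yielding a $\Delta$-harmonic function $w$ on $\Sigma^2$ extending continuously to $\hat\Sigma$ with $w|_{\pd\hat\Sigma}=f$. Since $\pd\hat\Sigma$ is compact and $f$ is bounded, the maximum principle gives $\inf f\le w\le \sup f$, so $w$ is bounded.

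Next I would transfer to the flat picture. Under the identification of $\set{y=0}$ with the diameter, approaching a boundary point $(t,0)$ in the flat coordinates is the same as approaching the corresponding geometric boundary point, so $w$ extends continuously to the flat boundary $\set{y=0}$ with boundary value $0$. A bounded Euclidean-harmonic function on $\Real\times(0,\infty)$ that is continuous up to $\set{y=0}$ with boundary value $0$ must vanish identically: odd reflection across $\set{y=0}$ produces a bounded harmonic function on $\Real^2$, which is constant by Liouville's theorem and hence $\equiv 0$. Thus $w\equiv 0$ on $\Sigma^2$. But along the $y$-axis the solution must satisfy $w(0,y)\To f(0,1)=1$ as $y\To\infty$, contradicting $w\equiv 0$.

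I expect the only delicate point to be the bookkeeping of the boundary identifications, namely making sure that the diameter of $\pd\hat\Sigma$ really corresponds to continuous approach to $\set{y=0}$ in the flat coordinates, and that the $y$-axis ray genuinely limits to the top vertex $(0,1)$; both are supplied by Theorems \ref{thm:Martinboundary} and \ref{thm:geomcompactificationintro}. Conceptually there is no further obstacle: the corollary is precisely the statement that the $\Delta$-Martin boundary is strictly coarser than the geometric boundary, the entire semicircle being crushed to the single point $\set{\infty}$, so no bounded $\Delta$-harmonic function can realize data that distinguishes the top vertex from the diameter.
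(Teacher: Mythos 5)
Your argument is correct, and it follows the same overall strategy as the paper's proof: argue by contradiction, observe that the candidate solution is bounded by the maximum principle, use the conformal invariance of harmonicity in dimension two to view it as a harmonic function on the half-plane vanishing continuously on $\set{y=0}$, and then derive a contradiction from the prescribed data on the semicircular part of the boundary. The one genuine difference is the rigidity step. The paper works with data $f\geq 0$ that is strictly positive on the open semicircle, deduces that the solution $F$ is a \emph{positive} harmonic function vanishing on $\set{y=0}$, and invokes the Martin representation for the hyperbolic half-plane (citing Ancona) to conclude $F$ is a positive multiple of $y$, contradicting boundedness. You instead use only boundedness: odd (Schwarz) reflection across $\set{y=0}$ plus Liouville forces $w\equiv 0$, which contradicts $w(0,y)\To f(0,1)=1$ along the vertical geodesic. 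Your version is more elementary and self-contained, avoiding both the strict positivity of the data on the whole semicircle and the appeal to the Martin boundary of the hyperbolic plane; the paper's version has the mild advantage of connecting the failure of solvability directly to the Martin kernel $h_\infty=y$, which feeds into Remark \ref{rem:Dirichletuniqueness}. The only point to make explicit in either approach, which you correctly flag, is that Euclidean convergence $(x,y)\To (t,0)$ agrees with convergence in the cone topology of $\tilde\Sigma$ to the corresponding boundary point, so that ``continuous boundary value $0$ on the diameter'' really yields continuity up to $\set{y=0}$ in flat coordinates; this is supplied by Theorem \ref{thm:geometriccompactification} and the fact that the metric is asymptotically hyperbolic as $y\To 0$.
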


\begin{remark}  (a) We also characterize those functions $f \in C^0\left(S_\infty(\Sigma^2)\right)$ for which the Dirichlet problem at infinity has a unique solution, see Remark \ref{rem:Dirichletuniqueness}.

(b) Corollary \ref{cor:Dirichlet} gives a new example of a Cartan-Hadamard manifold for which the Dirichlet problem 
at infinity is not solvable in general. Note that the curvature of $\Sigma^2$ is bounded from below by a negative constant, but approaches zero exponentially fast as $y \To \infty$ for each fixed $x$ (see Remark \ref{rem:Gaussdecay}). Previously, when the curvature of a Cartan-Hadamard manifold is assumed to be bounded from above by a negative constant but not from below, Ancona \cite{Ancona2} constructed a counterexample to the solvability of Dirichlet problem at infinity.  See also the work of Borb\'ely \cite{Borbely2}. On the other hand,  there are several papers on the solvability of the Dirichlet problem at infinity when the curvature lower bound has a quadratic growth condition \cite{HsuMarch}, or certain exponential growth conditions \cite{Borbely1, Hsu, Ji}.  
\end{remark}

An eigenfunction $w\in C^\infty(\Sigma)$ of the Laplace operator $\Delta$ with eigenvalue one is also referred as an $\calL$-harmonic function since $\calL w = 0$. Each boundary point $\omega \in \pd\hat \Sigma$ associates a kernel function $K(\cdot, \omega)$ that is positive on $\Sigma$ and $\calL$-harmonic.  The Martin integral representation theorem implies that for any positive $\calL$-harmonic function $w$, there is a (unique) finite non-negative Borel measure $\nu$ on $\pd\hat \Sigma$ such that
\[
w(x,y) = \int_{\pd\hat \Sigma} K(x,y,\omega)d\nu(\omega).
\]
From this integral representation we derive the following uniqueness result of positive eigenfunctions.

\begin{thm}\label{thm:uniquenessWpos}
Suppose that $W$ is a non-negative $\calL$-harmonic function on $\Sigma$ which vanishes on the boundary $\set{y=0}$ and satisfies the following inequality: 
\begin{equation}\label{eqn:Wgrowth}
W(a, y)\geq W(a, b) e^{\frac{y-b}{2}} \quad \text{when} \quad y\geq b
\end{equation}
for some point $(a, b) \in \Sigma^2$. Then either $W=0$, or it is a positive constant multiple of 
\[
W_0(x, y) = \frac{(e^y -1)^2}{e^{\half y}\sqrt{e^{2y}-1}}.
\] 
\end{thm}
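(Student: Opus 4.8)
The plan is to feed the Martin integral representation quoted above into a support analysis for the representing measure, using the vanishing condition to remove the contribution of the real-line part of $\pd\hat\Sigma$ and the growth condition to remove the remaining semicircle except the top point $(0,1)$. Since $(\Sigma^2,ds^2)$ is invariant under $x$-translations and $(0,1)$ is their unique fixed point on the boundary, the kernel $K(\cdot,(0,1))$ is itself $x$-independent, solves the ordinary differential equation $g''=P(y)g$, and a direct check identifies it (up to a constant) with $W_0$; in particular $W_0$ vanishes like $y^{3/2}$ as $y\To0$ and grows like $e^{y/2}$ as $y\To\infty$. By the representation, any non-negative $\calL$-harmonic $W$ can be written as $W(x,y)=\int_{\pd\hat\Sigma}K(x,y,\omega)\,d\nu(\omega)$ for a unique finite non-negative Borel measure $\nu$, and the whole problem becomes showing $\nu=c\,\delta_{(0,1)}$.

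First I would exploit the vanishing condition. Near $\{y=0\}$ the metric is asymptotically hyperbolic, so an indicial analysis of $W_{yy}-P(y)W=0$ (with $P(y)\sim\frac{3}{4y^2}$) shows that every positive $\calL$-harmonic function behaves like $c_1(x)\,y^{3/2}+c_2(x)\,y^{-1/2}$ as $y\To0$, with $c_2\geq0$ forced by positivity; the condition $W|_{y=0}=0$ is exactly the requirement that the singular coefficient $c_2$ vanish identically. Using the explicit kernels from Proposition \ref{prop:Martinkernel} and Theorem \ref{thm:MartincompSigma}, I would check that the kernels attached to the open segment $\{-1<u<1,\,v=0\}$ (the real line $\{y=0\}$) carry a non-trivial $y^{-1/2}$ trace, so that $\nu$ cannot charge this segment; hence $\nu$ is supported on the closed semicircle $\{\omega_\theta:\theta\in[0,\pi]\}$.

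Next I would use the growth condition to collapse the semicircle to its top point. Writing $\Psi_a(y)=W(a,y)e^{-y/2}=\int K(a,y,\omega)e^{-y/2}\,d\nu(\omega)$, inequality \eqref{eqn:Wgrowth} says precisely that $\Psi_a$ does not fall below its value $\Psi_a(b)>0$ for $y\geq b$. On the other hand, the explicit semicircle kernels are expected to have a product structure $K(\cdot,\omega_\theta)=g_\theta(y)e^{\frac12 x\cos\theta}$ with $g_\theta(y)\sim e^{\frac12 y\sin\theta}$, so that $K(a,y,\omega_\theta)e^{-y/2}$ tends to a positive constant when $\theta=\pi/2$ but is a profile decaying to $0$ as $y\To\infty$ when $\theta\neq\pi/2$. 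Thus any mass of $\nu$ away from $(0,1)$ contributes to $\Psi_a$ a term decaying at infinity, which is incompatible with the persistent lower bound forced by \eqref{eqn:Wgrowth} (here I would use that the estimate must hold for all large $y$, reflecting the monotonicity of $W\,e^{-y/2}$ coming from $\pd_yW-\half\coth(y)W\geq0$). Dominated convergence then gives $\Psi_a(\infty)=c_0\,\nu(\{(0,1)\})$, and combining this with the persistence of the bound forces $\nu=c\,\delta_{(0,1)}$, i.e. $W=c\,W_0$.

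Finally, if $c=0$ then $W\equiv0$, while if $W\not\equiv0$ the Harnack inequality (equivalently the strong maximum principle) for the uniformly elliptic operator $\calL=\Delta-1$ shows that a non-negative $\calL$-harmonic function vanishing at an interior point is identically zero, so $c>0$ and $W$ is a positive multiple of $W_0$. I expect the genuine difficulty to lie in the third step: the vanishing condition is soft, but extracting the concentration of $\nu$ at $(0,1)$ from the one-sided growth estimate is delicate, since a mere lower bound along a single vertical line cannot by itself exclude additional non-negative mass. The argument must therefore rely on the sharp asymptotics of the explicit kernels $K(\cdot,\omega_\theta)$—including their behavior near the corners $(\pm1,0)$—and on the fact that the growth hypothesis encodes the monotonicity of $W\,e^{-y/2}$, which is exactly what rules out the decaying semicircle profiles.
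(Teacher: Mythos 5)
Your overall strategy --- feed the vanishing and growth hypotheses into the Martin representation and show the representing measure is $c\,\delta_{(0,1)}$ --- is exactly the paper's, and your identification of $K(\cdot,(0,1))$ with $W_0$ (via $x$-translation invariance, or by Proposition \ref{prop:Martinkernel}(b)) is fine. The genuine gap is in your first step. The claim that the kernels attached to the segment $\set{-1<u<1,\ v=0}$ ``carry a non-trivial $y^{-1/2}$ trace'' is false: by the paper's own computation at the end of the proof of Theorem \ref{thm:MartincompSigma}, $K(x,y,\omega_{\xi})=O(y^{3/2})$ as $y\To 0$ for $x\ne\xi$ --- these are minimal kernels which vanish at every boundary point other than $\omega_{\xi}$ itself, just like the hyperbolic Poisson kernel. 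Consequently, for a non-atomic measure $\nu$ charging $\Real$, the potential $h(x,y)=\int_{\Real}K(x,y,\omega_{\xi})\,d\nu(\xi)$ can remain bounded as $y\To 0$ (the pointwise singularities are smeared out), and ruling out $h|_{y=0}\equiv 0$ is a Fatou-type boundary-trace statement that an indicial analysis of the ODE does not deliver. The paper closes this step differently, following Brawn: if $\nu([-r,r])>0$, the partial potential $h^{*}(x,y)=\int_{-r}^{r}K(x,y,\omega_{\xi})\,d\nu(\xi)$ tends to $0$ as $\abs{x}+y\To\infty$ (uniformly over $\xi\in[-r,r]$ by compactness and continuity of $K$ on $\hat\Sigma$), so if $h^{*}$ also vanished on $\set{y=0}$ the maximum principle would force $h^{*}\equiv 0$, contradicting $h^{*}(x_0,y_0)=\nu([-r,r])>0$.

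Your second step is also not conclusive as written, and you flag the difficulty yourself: a positive term decaying to $0$ added to a term increasing to its limit can perfectly well stay above its value at $y=b$, so ``the off-$(0,1)$ contribution decays, contradicting the persistent lower bound'' does not follow from (\ref{eqn:Wgrowth}) alone without quantitative input. The paper resolves this by differentiating rather than comparing values: setting $J(x,y,\theta)=\pd_{y}K(x,y,\theta)-\half K(x,y,\theta)$, one has $J(0,y,\tfrac{\pi}{2})>0$ with limit $0$ as $y\To\infty$, while $J(0,y,\theta)\To-\infty$ at least linearly for $\theta\ne\tfrac{\pi}{2}$; splitting the semicircle into $I_{\eps}=[0,\tfrac{\pi}{2}-\eps)\cup(\tfrac{\pi}{2}+\eps,\pi]$ and its complement then gives $\pd_{y}W-\half W<-\nu_0^{2}\delta<0$ for all large $y$ whenever $\nu(I_{\eps})=\nu_0>0$, which contradicts the monotonicity of $W e^{-y/2}$ encoded in (\ref{eqn:Wgrowth}). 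So the skeleton of your argument is right, but both reductions of the support of $\nu$ need the mechanisms above (maximum principle plus uniform decay for the real line; derivative asymptotics of the kernels for the semicircle) rather than the ones you propose.
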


\begin{remark}
\begin{enumerate}[(a)]
\item 
Theorem \ref{thm:uniquenessWpos} is used in \cite{CaoHesteady3d} to prove an infinitesimal rigidity result of the gradient steady Ricci soliton $M^3 = N^2 \times \Real$, where $N^2$ is  Hamilton's cigar soliton \cite{Hamiltonsurface}. 

\item 
In \cite{CaoHesteady3d} we used the statement of Theorem \ref{thm:uniquenessWpos} with (\ref{eqn:Wgrowth}) replaced by the  inequality 
\begin{equation*}\label{eqn:Winequality}
\pd_y W -\half \coth(y) W \geq 0,
\end{equation*}
which is a stronger assumption. 
\end{enumerate}
\end{remark}

\smallskip

Now we outline the main steps in the proofs. Since the metric of $\Sigma^2$ is explicit we obtain an integral formula of Green's function $G(x,y)$ of $\calL$ by the classical work of E. Titchmarsh in \cite{Titchmarsh}. The Martin kernel function is derived by asymptotic expansions of $G$ along various paths. In turn, it determines the Martin compactification of $\Sigma$. See Theorem \ref{thm:MartincompSigma} for Theorem \ref{thm:Martinboundary} and Theorem \ref{thm:geometriccompactification} for Theorem \ref{thm:geomcompactificationintro}, while Corollary \ref{cor:Dirichlet} is proved at the end of Section 6, and Theorem  \ref{thm:uniquenessWpos} is proved in Section 7. We refer to the table of contents for an overview of the paper's organization. 

\bigskip{}

\noindent
\textbf{Acknowledgement.}
We are grateful to Werner Ballmann,  Ovidiu Munteanu, Christian Remling, Jiaping Wang, Xiaodong Wang and Meijun Zhu for helpful discussions. Part of the work was carried out while the first author was visiting the University of Macau, where he was partially supported by Science and Technology Development Fund  (Macao S.A.R.) Grant FDCT/016/2013/A1, as well as RDG010 of University of Macau. 

\medskip{}

\section{Preliminaries}

In this section we collect some basics of positive solutions to linear elliptic equations on  complete Riemannian manifolds and Martin compactification of a complete Riemannian manifold $(M^n, g)$ with respect to an elliptic operator $L$. 

\subsection{Martin compactification of complete Riemannian manifolds}

Let $(M^n, g)$ be an $n$-dimensional complete non-compact Riemannian manifold and consider the operator $L = \Delta - 1$. The results in this subsection hold for any second order linear elliptic operator $L$ with uniformly H\"{o}lder continuous coefficients and $L(1) \leq 0$, see for example \cite{Taylor}.

Denote $\Delta M = \set{(x,x): x\in M} \subset M\times M$ the diagonal set. Recall the following 

\begin{definition}\label{defn:Greenfunction}
Let $L$ be a second order linear elliptic operator on $(M^n, g)$. A \emph{Green's function} of $L$ is a function $G: M\times M \backslash \Delta M \To [0, \infty)$ such that the following two equations hold: 
\begin{equation*}
- L_x \int_M G(x,y) \phi(y)dy = \phi(x) 
\end{equation*}
and 
\begin{equation*}
- \int_M G(x,y) L_y \phi(y) dy = \phi(x)
\end{equation*}
for any smooth function $\phi$ with compact support on $M$.
\end{definition}
\begin{remark}
Let $\set{U_n}_{i=1}^\infty$ be an exhaustion of $M$ by relatively compact subsets with $C^2$ boundary, and let $G_{i}$ be the unique Green's function of $L$ on $U_i$ with the Dirichlet boundary condition, then we have
\[
G(x,y) = \lim_{i\To \infty} G_i (x,y) \quad\text{for all } (x,y)\in M\times M \backslash \Delta M
\]
whenever the Green's function $G$ exists, see for example \cite[Proposition 5.6]{Taylor}. Such a Green's function is called minimal, see \cite{LiTam} when $L = \Delta$.
\end{remark}

We assume that $L$ admits a Green's function $G$. In the following we describe the Martin compactification from the limiting behavior of the Green's function $G$, see also \cite{Ballmann}. Fix a reference point $x_0 \in M$ and consider the normalized Green's function
\begin{equation}\label{eqn:Kxy}
K(x, y) =
\left\{
\begin{array}{cl} 
1 & \text{if }  x_0 = x = y \\
& \\
\dfrac{G(x, y)}{G(x_0, y)} & \text{otherwise}.
\end{array}
\right.
\end{equation}
For any fixed $y\in M$, the function $K(\cdot, y)$ is harmonic on $M \backslash \set{y}$ and equals to $1$ at $x_0$. By the Harnack principle, any sequence $\set{x_i}\subset M$ with $\mathrm{dist}(x_0, x_i)\to \infty$ has a subsequence $\set{x_{i_k}}$ such that $\set{K(\cdot, x_{i_k})}$ converges. The limit is a positive $L$-harmonic function on $M$ with value $1$ at $x_0$. Now consider the space of all sequences $\set{x_i}$ in $M$ with $\mathrm{dist}(x_0, x_i)\To \infty$ such that $\set{K(\cdot, x_i)}$ converges. Two such sequences $\set{x_i}$ and $\set{x_i'}$ in $M$ are equivalent if their corresponding limit functions coincide. The \emph{Martin boundary} $\pd_L M$ is defined as the space of equivalence classes. The \emph{Martin topology} on $\hat M^L = M \cup \pd_L M$ induces the given topology on $M$ and is such that a sequence $\set{x_n}\subset \hat M^L$ converges to $\omega \in \pd_L M$ if and only if $\set{K(\cdot, x_i)}$ converges to $K(\cdot, \omega)$. The space $\hat M^L$ is compact with respect to the Martin topology and is called \emph{Martin compactification} of $M$. The Martin topology on $\hat M^L$ is equivalent to the one induced by the following metric:
\[
d(y,z) = \int_M \min\set{1, \abs{K(x,y) - K(x,z)}}f(x) dv_x,
\]
where $f: M \To (0, 1]$ is any positive continuous function and integrable on $M$. Recall 
\begin{definition}
A positive $L$-harmonic function $w$ on $M^n$ is called \emph{minimal} if for any $L$-harmonic function $u\ge 0$, $u \leq w$ on $M$ implies $u = C w$ for some constant $0<C \le 1$. A boundary point $\omega \in \pd_L M$ is called \emph{minimal} if $K(\cdot, \omega)$ is a minimal $L$-harmonic function. Denote $\pd_e M\subset \pd_L M$ the set of all minimal boundary points. 
\end{definition}

Next we collect some basic results on Martin kernels and Martin integral representations, see,  e.g., \cite[Theorem 1.10]{Muratastructure} and \cite[Section 6]{Taylor}.
\begin{thm}\label{thm:Martinintegralrep}
Let $(M^n, g)$ be a complete Riemannian manifold and $L$ an elliptic operator on $M$ with the Green's function $G$. Then, 
\begin{enumerate}[(i)]
\item Any positive minimal $L$-harmonic function is a positive constant multiple of $K(\cdot, \omega)$ for some $\omega \in \pd_e M$.
\item $\pd_e M$ is a $G_\delta$ set, i.e. a countable intersection of open subsets of $\pd_L M$.
\item $K(x,y)$ is continuous on $(M \times \hat M^L) \backslash \Delta M$. 
\item For any positive $L$-harmonic function $u$, there exists a unique finite Borel measure $\nu$ on $\pd_L M$ such that $\nu(\pd_L M \backslash \pd_e M) = 0$ and 
\begin{equation}
u(x) = \int_{\pd_e M} K(x,\omega)d\nu(\omega).
\end{equation}
\end{enumerate}
\end{thm}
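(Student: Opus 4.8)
Since the four assertions together constitute the classical Martin representation theory for the operator $L$, my plan is to assemble them from three ingredients: the Harnack inequality for positive $L$-harmonic functions, the resulting compactness of normalized solutions, and the Choquet theory of compact convex sets. Throughout I work with the set $\mathcal{B} = \set{h : Lh = 0,\ h>0,\ h(x_0)=1}$ of positive $L$-harmonic functions normalized at the reference point $x_0$.

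First I would set up the analytic backbone. The hypothesis $L(1)\le 0$ together with the assumed regularity of the coefficients yields Harnack's inequality: on each relatively compact $\Omega$ there is a constant $C_\Omega$ with $\sup_\Omega h \le C_\Omega \inf_\Omega h$ for every positive $L$-harmonic $h$. Applied to the normalized kernels $K(\cdot,y)$ (which are $L$-harmonic away from their pole $y$), this gives locally uniform bounds once $y$ has left a fixed compact set, and interior Schauder estimates give local equicontinuity; by Arzel\`a--Ascoli the family is precompact in the topology of locally uniform convergence, which is exactly the topology metrized by the distance $d$ in the text. Thus the Martin compactification $\hat M^L$ arises as the closure of the embedded image $\set{K(\cdot,y):y\in M}$ inside the space of positive functions normalized at $x_0$ and $L$-harmonic off at most their pole, and each boundary kernel $K(\cdot,\omega)$, $\omega\in\pd_L M$, is a genuine element of the compact, convex, metrizable set $\mathcal{B}$. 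The Harnack principle upgrades Martin-topology convergence $y_i\to\omega$ to locally uniform convergence $K(\cdot,y_i)\to K(\cdot,\omega)$; combined with the ordinary off-diagonal continuity of the minimal Green's function and the fact that distinct poles yield distinct kernels (so $M\hookrightarrow\hat M^L$ is a homeomorphism onto its image), this gives the joint continuity of $K$ on $(M\times\hat M^L)\setminus\Delta M$ asserted in (iii).

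For the representation in (iv) I would use the exhaustion $\set{U_i}$. Writing $u(x)=\int_{\pd U_i}u\,d\omega_x^i$ with the $L$-harmonic measure $\omega_x^i$ on $\pd U_i$, and setting $d\nu_i = u\,d\omega_{x_0}^i$ (a measure on $\pd U_i$, pushed to $\hat M^L$ via the Martin embedding, of total mass $\nu_i(\hat M^L)=u(x_0)$ since $K(x_0,\cdot)\equiv 1$), the Harnack comparison $d\omega_x^i/d\omega_{x_0}^i\to K(x,\cdot)$ as the pole escapes to infinity lets me pass to a weak-$*$ subsequential limit $\nu$ supported on $\pd_L M$ and, using (iii), obtain $u(x)=\int_{\pd_L M}K(x,\omega)\,d\nu(\omega)$. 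This is existence. For (i), a normalized positive $L$-harmonic function is extreme in $\mathcal{B}$ precisely when it is minimal; since $\mathcal{B}$ is now exhibited as the closed convex hull of $\set{K(\cdot,\omega):\omega\in\pd_L M}$, Milman's partial converse to Krein--Milman places every extreme point in that closed set, so each minimal normalized function equals some $K(\cdot,\omega)$, and $\omega$ is then minimal by definition, i.e.\ $\omega\in\pd_e M$. The Choquet fact that the extreme points of a metrizable compact convex set form a $G_\delta$ transports through the homeomorphism $\omega\mapsto K(\cdot,\omega)$ to show $\pd_e M$ is $G_\delta$, giving (ii), and guarantees that $\nu$ may be taken to vanish off $\pd_e M$.

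The main obstacle is the \emph{uniqueness} in (iv) together with the concentration of $\nu$ on the minimal boundary: existence by weak-$*$ limits is soft, but uniqueness is the deep point and demands that $\mathcal{B}$ be a Choquet simplex, equivalently that the cone of positive $L$-harmonic functions be a lattice in its intrinsic (specific) order. I would establish this lattice property through the standard potential-theoretic reduction operators (\emph{r\'eduites}), showing that two positive $L$-harmonic functions admit a greatest harmonic minorant computed as an increasing limit of reductions over the exhausting domains. Once the simplex structure is in hand, the Choquet--Meyer theorem yields a unique maximal representing measure, which in the metrizable case is automatically carried by the $G_\delta$ set of extreme points; translating back gives the unique $\nu$ on $\pd_e M$ of (iv), and comparing with (i) fixes the constant in the definition of minimality. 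Securing the lattice/simplex structure is the one step that genuinely goes beyond compactness and Harnack estimates.
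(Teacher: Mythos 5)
The paper does not prove this theorem at all: it is quoted as background, with the proof delegated to the literature (``see, e.g., \cite[Theorem 1.10]{Muratastructure} and \cite[Section 6]{Taylor}''). So there is no in-paper argument to compare against; your proposal has to be judged as a reconstruction of the cited results, and as such it is essentially correct and is in fact the standard route. Your outline --- Harnack plus Schauder to make $\mathcal{B}$ a compact convex metrizable set, Milman's converse to Krein--Milman for (i), Choquet's $G_\delta$ theorem for (ii), the Harnack upgrade of Martin convergence to locally uniform convergence for (iii), weak-$*$ limits of exhaustion measures for existence in (iv), and the lattice/simplex structure via r\'eduites plus Choquet--Meyer for uniqueness --- is precisely the Choquet-theoretic treatment of Martin theory (Taylor's survey runs the potential-theoretic side of the same argument; the identification of minimal functions with extreme points of $\mathcal{B}$, which you assert as standard, is the easy two-line computation using the strong maximum principle and the normalization at $x_0$). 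You also correctly isolate the genuinely deep step: uniqueness is equivalent to the simplex property, and compactness plus Harnack alone will not give it.

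Two small points deserve tightening if this were written out in full. First, in the existence step for (iv), the claim that $d\omega_x^i/d\omega_{x_0}^i \to K(x,\cdot)$ as the exhaustion grows is the sketchiest link; the cleaner standard device is to represent the r\'eduite of $u$ on $M\setminus U_i$ as a Green potential $\int G(\cdot,y)\,d\mu_i(y)$ with $\mu_i$ supported on $\pd U_i$ and to set $d\nu_i = G(x_0,y)\,d\mu_i(y)$, which has total mass $u(x_0)$ and makes the weak-$*$ limit argument immediate via (iii). Second, the Choquet--Meyer theorem gives uniqueness of the \emph{maximal} representing measure, whereas the theorem asserts uniqueness among measures with $\nu(\pd_L M\backslash \pd_e M)=0$; to close this you need the metrizable-case equivalence that a representing measure is maximal if and only if it is carried by the $G_\delta$ set of extreme points (Phelps' lectures on Choquet theory). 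With those two repairs cited or supplied, your proposal is a complete and faithful substitute for the references the paper invokes.
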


\subsection{Positive $L$-harmonic functions and Green's function}
Let $U\subset M$ be a bounded domain. Then, the first eigenvalue of $\Delta$ on $U$ with the Dirichlet boundary condition is given by
\[
\lambda_1(U) = \inf\set{\int_U \abs{\nabla f}^2 dv: \mathrm{supp} f \subset U, \int_U f^2 dv=1}.
\]
Denote $\lambda_1(M)$ the bottom of the $L^2$-spectrum of $\Delta$, then we have
\[
\lambda_1(M) =\lim_{i\To \infty}\lambda_1(U_i)
\]
where $\set{U_i}_{i=1}^\infty$ is any exhaustion of $M$ by relatively compact subsets with $C^2$ boundary. We recall the following well-known result of existence of positive $L$-harmonic function for $L = \Delta + \lambda$, see for example, \cite[Theorem 1]{Fisher-ColbrieSchoen}.
\begin{proposition}
The equation $L u = \Delta u + \lambda u = 0$ for $\lambda \in \Real$ has a positive solution $u$ on $M$ if and only if $\lambda \leq \lambda_1(M)$.
\end{proposition}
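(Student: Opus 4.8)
This is the Fischer-Colbrie–Schoen existence criterion, and the plan is to establish the two implications separately; sufficiency (existence of a positive solution) is the substantive direction.

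For necessity, suppose $u>0$ solves $\Delta u + \lambda u = 0$ on $M$. Given any $f\in C_0^\infty(M)$, I would write $f = u\phi$ with $\phi = f/u$ (again smooth and compactly supported, since $u>0$) and use the ground-state substitution. Integrating by parts, one obtains the identity
\[
\int_M \abs{\nabla f}^2\,dv = \int_M u^2\abs{\nabla \phi}^2\,dv - \int_M \phi^2\,u\,\Delta u\,dv,
\]
and substituting $\Delta u = -\lambda u$ turns this into
\[
\int_M \abs{\nabla f}^2\,dv - \lambda\int_M f^2\,dv = \int_M u^2\abs{\nabla \phi}^2\,dv \geq 0.
\]
Thus $\int_M\abs{\nabla f}^2 \geq \lambda\int_M f^2$ for every compactly supported $f$; taking the infimum over $f$ normalized by $\int_M f^2 = 1$ gives $\lambda_1(M)\geq \lambda$.

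For sufficiency, assume $\lambda\leq \lambda_1(M)$ and fix an exhaustion $\set{U_i}$ by relatively compact domains together with a base point $x_0\in U_1$. By strict domain monotonicity of the first Dirichlet eigenvalue and $\lambda_1(U_i)\To\lambda_1(M)$, one has $\lambda < \lambda_1(U_i)$ for every $i$, so the quadratic form $f\mapsto \int_{U_i}(\abs{\nabla f}^2 - \lambda f^2)$ is positive definite on $H_0^1(U_i)$. I would then solve the Dirichlet problem $(\Delta + \lambda)u_i = 0$ in $U_i$ with $u_i = 1$ on $\pd U_i$; testing the equation against the negative part $u_i^-\in H_0^1(U_i)$ and invoking $\lambda < \lambda_1(U_i)$ forces $u_i^- \equiv 0$, whence $u_i > 0$ in $U_i$ by the strong maximum principle. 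Normalizing to $u_i/u_i(x_0)$ produces positive $(\Delta+\lambda)$-solutions with value $1$ at $x_0$.

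The main obstacle is passing to the limit, which I expect to handle via the Harnack principle for $\Delta + \lambda$: on each fixed compact $K\subset M$ the normalized solutions are uniformly bounded above and below once $i$ is large, so local elliptic estimates give uniform $C^{2,\alpha}$ bounds and a diagonal subsequence converges in $C^2_{\mathrm{loc}}$ to a solution $u$ of $(\Delta+\lambda)u=0$ on $M$ with $u(x_0)=1$. Since $u$ is a locally uniform limit of positive functions and $u(x_0)=1>0$, the strong maximum principle shows $u>0$ everywhere, producing the desired global positive solution.
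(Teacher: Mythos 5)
Your argument is correct and is precisely the standard Fischer-Colbrie--Schoen argument; the paper itself offers no proof and simply cites \cite[Theorem 1]{Fisher-ColbrieSchoen}, which is the source of exactly this two-step scheme (ground-state substitution for necessity; exhaustion, positivity via testing against $u_i^-$, Harnack plus elliptic estimates for sufficiency). The only point worth making explicit is that the strict inequality $\lambda_1(U_i)>\lambda_1(M)$ in the case $\lambda=\lambda_1(M)$ relies on strict domain monotonicity, which holds once the exhaustion is chosen with $\overline{U_i}\subset U_{i+1}$ and $U_{i+1}$ connected (so that the extended first eigenfunction of $U_i$ cannot be an eigenfunction of $U_{i+1}$).
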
 

\begin{remark}\label{rem:bottomL2spectrum}
Note that on the surface $\Sigma^2$, the metric defined by (\ref{eqn:ds2surface}) is asymptotic to the flat one as $y$ approaches infinity, so we have $\lambda_1\left(\Sigma^2\right) = 0$.
\end{remark}

For an elliptic operator $L$, while a Green's function always exists locally, the existence of global Green's function requires extra conditions. The result  below follows from \cite[Corollary 5.13]{Taylor}.
\begin{proposition}
A Riemannian manifold $(M^n, g)$ admits a Green's function if either one of the following conditions holds:
\begin{enumerate}
\item The function $1$ is not $L$-harmonic, or
\item there are two non-proportional positive $L$-harmonic functions on $M$.
\end{enumerate}
\end{proposition}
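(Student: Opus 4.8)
The plan is to argue by contraposition. By the construction recalled just above, $L$ admits a Green's function precisely when the Dirichlet Green's functions $G_i$ on an exhaustion $\{U_i\}$ satisfy $\lim_{i\to\infty} G_i(x,y) < \infty$ off the diagonal; otherwise $G_i(x,y)\to\infty$, which is the \emph{parabolic} (or \emph{critical}) case. So it suffices to show that in the parabolic case \emph{both} hypotheses fail, that is, that $1$ is $L$-harmonic and that any two positive $L$-harmonic functions are proportional. Contrapositively, if either (1) or (2) holds, then $L$ cannot be parabolic and hence admits a Green's function.

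First I would construct the ground state. Fix distinct points $y_0, x_1 \in M$ and set $h_i = G_i(\cdot, y_0)/G_i(x_1, y_0)$, so $h_i(x_1)=1$ and each $h_i$ is a positive $L$-harmonic function on $U_i \setminus \{y_0\}$. On any compact set disjoint from $y_0$, the Harnack inequality for positive solutions of $Lu=0$ bounds the $h_i$ uniformly, and interior Schauder estimates give equicontinuity, so a diagonal subsequence converges locally uniformly on $M \setminus \{y_0\}$ to a positive $L$-harmonic function $h$ with $h(x_1)=1$. The pole at $y_0$ washes out: the singular part of $h_i$ there is the fixed local fundamental-solution singularity divided by $G_i(x_1,y_0)\to\infty$, so its coefficient tends to $0$ and $h$ extends across $y_0$ to a genuine positive $L$-harmonic function on all of $M$, the ground state.

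The crux, and the step I expect to be the main obstacle, is the rigidity of the parabolic case: \emph{every} positive supersolution is a constant multiple of $h$, i.e. if $w>0$ and $Lw \le 0$ on $M$, then $w = c\,h$ for some $c>0$. I would prove this by the ground-state (Doob) transform. Writing $u = w/h$, conjugation by $h$ turns $L$ into a pure second-order operator $\hat L u = h^{-1}L(hu)$ whose zeroth-order term cancels because $Lh=0$; explicitly $\hat L = \Delta + 2(\nabla h/h)\cdot\nabla$, a drift-Laplacian with $\hat L 1 = 0$, for which $u$ is a positive $\hat L$-superharmonic function. Criticality is invariant under this transform, so $\hat L$ inherits the parabolicity of $L$ and admits no Green's function; by the classical equivalence between parabolicity, nonexistence of a Green's function, and the Liouville property for positive superharmonic functions, $u$ must be constant. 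Hence $u \equiv c$ and $w = c\,h$. This rigidity is precisely the content imported from \cite[Corollary 5.13]{Taylor}; everything surrounding it is maximum-principle bookkeeping.

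Finally I would conclude. Since $L(1)\le 0$ by standing assumption, the constant function $1$ is a positive supersolution, so the rigidity step gives $1 = c\,h$; thus $h = 1/c$ is constant and $0 = Lh = (1/c)L(1)$ by linearity, forcing $L(1)=0$, i.e. $1$ \emph{is} $L$-harmonic, so (1) fails. Likewise any positive $L$-harmonic function $w$ is in particular a positive supersolution, hence equals $c\,h$, so any two such functions are proportional and (2) fails. This establishes the contrapositive, and therefore the Proposition.
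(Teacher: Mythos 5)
Your proposal is correct, but it cannot be compared against an internal argument, because the paper offers none: the proposition is stated as an immediate consequence of \cite[Corollary 5.13]{Taylor}, with no proof given. What you have done is reconstruct, essentially in full, the criticality-theory argument that underlies that citation, and your reconstruction is sound: the subcritical/critical dichotomy via the monotone exhaustion limit $\lim_i G_i$, the ground state $h$ obtained as a locally uniform limit of $G_i(\cdot,y_0)/G_i(x_1,y_0)$ (with the pole removed precisely because the normalizing denominator diverges in the critical case), the $h$-transform $\hat L u = h^{-1}L(hu)$ reducing the rigidity of positive supersolutions to the classical Liouville property of a parabolic drift Laplacian, and the final step exploiting the standing hypothesis $L(1)\le 0$ to kill conditions (1) and (2) simultaneously --- this is exactly the Murata--Pinchover--Taylor machinery that the paper imports wholesale. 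Two points deserve to be made explicit if you write this up. First, the monotonicity $G_i \le G_{i+1}$, and hence the everywhere-finite-or-everywhere-infinite dichotomy you invoke at the outset (via Harnack), rests on the maximum principle, which is where $L(1)\le 0$ (nonpositive zeroth-order coefficient) is used a second time; without it the whole exhaustion picture breaks down. Second, your explicit formula $\hat L = \Delta + 2(\nabla h/h)\cdot\nabla$ presupposes $L=\Delta+V$ with no first-order terms; this covers the paper's operator $\calL = \Delta - 1$ (for which, incidentally, condition (1) holds trivially since $\calL\,1 = -1$, so existence of the Green's function is immediate in the paper's application), but for a general second-order $L$ the transformed operator acquires the drift of $L$ as well, and one then needs the equivalence between nonexistence of a Green's function and the positive-supersolution Liouville property for non-symmetric drift operators --- true, but requiring a slightly different reference than the weighted-Laplacian case. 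With those caveats recorded, your argument is a complete and correct substitute for the citation.
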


In some special case where $M^n$ is diffeomorphic to $\Real^n$ and $L u = 0$ is a separable equation, Titchmarsh showed that the Green's function of $L$ has an explicit integral form, see \cite{TitchmarshGreen} or \cite[Chapter 15]{Titchmarsh}. 
\begin{thm}\label{thm:TitchmarshGreen}
Let $q(x,y) = q_1(x) + q_2(y)$ be a continuous function on $\Real^2 = \set{(x,y) : x, y\in \Real}$. Denote the Green's functions $G_1(x,\xi, \lambda)$ and $G_2(y,\eta, \lambda)$ of the differential operators $L_1$ and $L_2$ respectively with
\begin{eqnarray*}
L_1 = \frac{d^2}{dx^2} - q_1(x) + \lambda & \text{and} &  L_2 = \frac{d^2}{dy^2} - q_2(y) + \lambda. 
\end{eqnarray*}
Assume that the spectra of $L_1$, $L_2$ are bounded below at $\lambda = \alpha$ and $\lambda = \beta$ respectively. Then for $\Re(\lambda) < \alpha + \beta$, the Green function $G(x,y,\xi,\eta,\lambda)$ associated with the operator 
\[
L = \frac{\pd^2}{\pd x^2} + \frac{\pd^2}{\pd y^2} - q(x,y)+\lambda
\]
has the following integral form
\begin{equation}
G(x, y,\xi, \eta, \lambda) = \frac{1}{2\pi i}\int_{c - i \infty}^{c+i \infty} G_1(x,\xi, \mu)G_2(y,\eta, \lambda - \mu)d\mu
\end{equation}
where the integral is taken along a straight line $\{c+iy\}$ with $\Re(\lambda) - \beta < c < \alpha$. 
\end{thm}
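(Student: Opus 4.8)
The plan is to verify directly that the proposed kernel, which I will call $\tilde{G}(x,y,\xi,\eta,\lambda) = \frac{1}{2\pi i}\int_{c-i\infty}^{c+i\infty} G_1(x,\xi,\mu)G_2(y,\eta,\lambda-\mu)\,d\mu$, is the (minimal) Green's function of $L$, i.e.\ that $L_{(x,y)}\tilde{G} = -\delta(x-\xi)\delta(y-\eta)$ in the distributional sense and that $\tilde{G}$ has the correct decay at infinity. The conceptual content behind the formula is the operator identity $(H-\lambda)^{-1} = \frac{1}{2\pi i}\int_c (H_1-\mu)^{-1}\otimes (H_2-(\lambda-\mu))^{-1}\,d\mu$, where $H_1 = -\partial_x^2 + q_1$ and $H_2 = -\partial_y^2 + q_2$ are the one-variable Schr\"odinger operators with $\inf\sigma(H_1)=\alpha$, $\inf\sigma(H_2)=\beta$, and $H = H_1\otimes I + I\otimes H_2$ acts on $L^2(\Real^2)$ with $\inf\sigma(H)=\alpha+\beta$; the integral kernel of each side is exactly $G_1 G_2$ and $G$, respectively.

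First I would record the contour constraint: $G_1(x,\xi,\mu)$ is analytic in $\mu$ for $\Re\mu<\alpha$ and $G_2(y,\eta,\lambda-\mu)$ for $\Re(\lambda-\mu)<\beta$, so both factors are holomorphic along the line $\Re\mu=c$ precisely when $\Re\lambda-\beta<c<\alpha$, which is possible exactly when $\Re\lambda<\alpha+\beta$. Next I would establish the decay estimates needed for convergence and for differentiation under the integral sign: modeling on the free resolvent $\frac{1}{2\sqrt{-\mu}}e^{-\sqrt{-\mu}\,|x-\xi|}$, one has a bound of the form $|G_1(x,\xi,\mu)|\le C|\mu|^{-1/2}e^{-\gamma\sqrt{|\mu|}\,|x-\xi|}$ along the line, and likewise for $G_2$, since $\Re\sqrt{-\mu}\sim\sqrt{|\Im\mu|/2}$ as $|\Im\mu|\To\infty$. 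For $(x,y)\neq(\xi,\eta)$ at least one exponential factor is nontrivial, which yields absolute convergence of the integral; the analogous estimates for the $x$- and $y$-derivatives of $G_1,G_2$ then justify applying $L$ termwise under the integral.

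The heart of the argument is the resulting distributional computation. Using $\partial_x^2 G_1 - q_1 G_1 = -\mu G_1 - \delta(x-\xi)$ and $\partial_y^2 G_2 - q_2 G_2 = -(\lambda-\mu)G_2 - \delta(y-\eta)$, applying $L = \partial_x^2+\partial_y^2-q_1-q_2+\lambda$ under the integral collapses the $G_1 G_2$ terms, since their total coefficient $-\mu-(\lambda-\mu)+\lambda$ vanishes, and leaves $L\tilde{G} = -\delta(x-\xi)\,\frac{1}{2\pi i}\int_c G_2(y,\eta,\lambda-\mu)\,d\mu - \delta(y-\eta)\,\frac{1}{2\pi i}\int_c G_1(x,\xi,\mu)\,d\mu$. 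The key—and initially surprising—point is that each of these one-dimensional contour integrals equals \emph{one half} of a delta. From the spectral representation $G_1(x,\xi,\mu)=\int \frac{dE_1(s)(x,\xi)}{s-\mu}$ and the elementary evaluation $\frac{1}{2\pi i}\int_{c-i\infty}^{c+i\infty}\frac{d\mu}{s-\mu}=\tfrac12$ for $s>c$ (the vertical line picks up half the residue, as one sees from the $\arctan$ computation), completeness gives $\frac{1}{2\pi i}\int_c G_1\,d\mu=\tfrac12\delta(x-\xi)$; symmetrically, since the poles of $G_2(y,\eta,\lambda-\mu)$ in $\mu$ lie to the \emph{left} of the contour, $\frac{1}{2\pi i}\int_c G_2\,d\mu=\tfrac12\delta(y-\eta)$. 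The two halves add to the full source, giving $L\tilde{G} = -\delta(x-\xi)\delta(y-\eta)$.

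I expect the main obstacle to be making the passage to the two $\tfrac12\,\delta$ contributions fully rigorous: interchanging the contour integral with the spectral (or exhaustion) resolution of the one-dimensional Green's functions requires uniform control of the kernels along the vertical line, and the merely $1/|\Im\mu|$ (conditional) decay on the diagonal $x=\xi$, $y=\eta$ must be handled with care—this is precisely the point where $\tilde{G}$ becomes singular. Once $\tilde{G}$ is shown to be a Green's function, uniqueness comes for free: for $\Re\lambda<\alpha+\beta$ the operator $H-\lambda$ is invertible with bounded inverse, so its resolvent kernel is unique and agrees with the minimal Green's function obtained from the exhaustion in the Remark after Definition \ref{defn:Greenfunction}, a decay property that the estimates above confirm for $\tilde{G}$.
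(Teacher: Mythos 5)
Your outline is essentially correct, but note first that the paper contains no proof of this statement: Theorem \ref{thm:TitchmarshGreen} is quoted from Titchmarsh (\cite{TitchmarshGreen} and \cite[Chapter 15]{Titchmarsh}), with only the remark that $G$ is constructed by exhausting $\Real^2$ by finite rectangles. So the real comparison is with Titchmarsh's argument, and yours takes a genuinely different route. Titchmarsh works first on a finite rectangle, where the Green's function has the double eigenfunction expansion $\sum_{m,n} u_m(x)u_m(\xi)v_n(y)v_n(\eta)/(\mu_m+\nu_n-\lambda)$, converts this into the contour convolution of the two one-dimensional Green's functions via the partial-fraction identity $\frac{1}{2\pi i}\int_{c-i\infty}^{c+i\infty}\frac{d\mu}{(\mu_m-\mu)(\nu_n-\lambda+\mu)}=\frac{1}{\mu_m+\nu_n-\lambda}$ (valid precisely because $\Re\lambda-\beta<c<\alpha$ separates the two pole sets), and then passes to the exhaustion limit, which yields minimality automatically --- this is what Remark 2.8(a) of the paper records. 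Your direct distributional verification is the same algebra seen from the other end: the cancellation $-\mu-(\lambda-\mu)+\lambda=0$ together with the two half-residue evaluations is exactly the distributional shadow of that partial-fraction identity, and your bookkeeping checks out --- $\frac{1}{2\pi i}\int_{c-i\infty}^{c+i\infty}\frac{d\mu}{s-\mu}=\half$ in the principal-value sense for $s>c$, the poles of $G_2(y,\eta,\lambda-\mu)$ in $\mu$ sit at $\lambda-t$ with $t\geq\beta$, hence to the left of the line and contribute $+\half$ as well, and the two halves sum to the full source with the correct sign. What your route buys is brevity and the conceptual identification of $G$ with the resolvent kernel of $H_1\otimes I+I\otimes H_2$ below $\inf\sigma(H)=\alpha+\beta$; what it costs is the technical layer that the finite-rectangle construction gets for free.

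Three points in that layer deserve explicit treatment. First, the bound $\abs{G_1(x,\xi,\mu)}\leq C\abs{\mu}^{-1/2}e^{-\gamma\sqrt{\abs{\mu}}\,\abs{x-\xi}}$ on the line $\Re\mu=c$ is not automatic for a merely continuous $q_1$; it requires the large-$\abs{\mu}$ asymptotics of Sturm--Liouville Green's functions, which must be cited (Titchmarsh proves them) or established. Second, your half-delta step is cleanest not at kernel level but on vectors: the truncated integrals satisfy $\frac{1}{2\pi i}\int_{c-iT}^{c+iT}\frac{d\mu}{s-\mu}=\frac{1}{\pi}\arctan\frac{T}{s-c}$, uniformly bounded in $s>c$ and $T$, so the spectral theorem plus dominated convergence gives $\frac{1}{2\pi i}\int_{c-iT}^{c+iT}(H_1-\mu)^{-1}f\,d\mu\To\half f$ in $L^2$ for every $f$; testing everything against $\phi\in C^\infty_c(\Real^2)$ then legitimizes both the differentiation under the integral and the interchange with the spectral resolution, which is the interchange you rightly flag as delicate on the diagonal. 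Third, for complex $\lambda$ the nonnegative minimal Green's function of Definition \ref{defn:Greenfunction} does not literally apply; the clean fix is to prove the identity for real $\lambda<\alpha+\beta$, identify the exhaustion limit of the Dirichlet Green's functions with the resolvent kernel of the self-adjoint (form-sum) operator $H$ by monotone convergence, and extend to $\Re\lambda<\alpha+\beta$ by analytic continuation of both sides in $\lambda$. None of these is a fatal gap; with them supplied, your verification is a complete and more conceptual alternative to the quoted proof.
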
 
\begin{remark}
\begin{enumerate}[(a)]
\item The Green's function $G(x,y,\xi,\eta,\lambda)$ is constructed using the exhaustion of $\Real^2$ by rectangles of finite size. It agrees with the construction of the minimal positive Green's function in Definition \ref{defn:Greenfunction} for complete Riemannian manifolds. 
\item Similar formulae hold when there are more than two independent variables. 
\end{enumerate}
\end{remark}

\medskip{}

\section{Geometric compactification of the surface $\Sigma^2$}

In this section we determine all geodesics on $\Sigma^2$ and then the geometric compactification of $\Sigma^2$, see Proposition \ref{prop:Sigmageodesics} and Theorem \ref{thm:geometriccompactification}.

\smallskip

Recall that the surface $\Sigma^2 = \set{(x,y) \in \Real \times (0, \infty)}$ has the length element, see  (\ref{eqn:ds2surface}), of the form
\begin{equation*}
ds^2 = {P(y)}\left(dx^2 + dy^2\right)
\end{equation*}
with
\begin{equation}\label{eqn:Py}
P(y) = \frac{e^{4y} + 10 e^{2y}+1}{4(e^{2y}-1)^2}.
\end{equation}
Clearly, $ds^2$ is a positive definite warped product metric on $\Real\times (0, \infty)$ so it is complete in $x$-direction for any fixed $y$. It is also complete as $y \To \infty$ since $P(y)$ converges to $\frac 1 4$. When $y \To 0$ we have 
\[
\sqrt{P(y)} = \frac{\sqrt 3}{2y} + \frac{y^3}{20\sqrt 3} + O(y^5),
\]
hence it follows that the metric is also complete as $y \To 0$. Therefore, $(\Sigma^2, ds^2)$ is a complete surface. Moreover, its Gauss curvature is given by 

\begin{equation}\label{eqn:GuassK}
K(y) = - \frac{96 e^{2y}\left(e^{8y} + 2 e^{6y} + 18 e^{4y} + 2e^{2y} + 1\right)}{\left(e^{4y} + 10 e^{2y} + 1\right)^3} < 0,
\end{equation}
thus 
\[
\lim_{y \To 0} K(y) = - \frac 4 3 \quad \text{and} \quad \lim_{y \To \infty} K(y) = 0.
\]
Note that the minimal value  $K_{\min} = - \frac 5 3$ of $K(y)$ is achieved at $y = \log(2+\sqrt 3)$. 

To summarize, we have the following 

\begin{proposition}\label{prop:surfacecomplete}
$(\Sigma^2, g)$, with the metric $g$ given by (\ref{eqn:ds2surface}), is a complete surface with negative Gauss curvature bounded below by $-\frac 5 3$. 
\end{proposition}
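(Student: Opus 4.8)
The plan is to separate the two claims and derive each from the explicit form of $P$. For completeness I would first record two global estimates. From
\[
P(y) - \frac14 = \frac{12\, e^{2y}}{4(e^{2y}-1)^2} > 0
\]
we get $\sqrt{P}>\frac12$ on all of $(0,\infty)$, so every curve has $g$-length at least half its Euclidean length and hence $d_g \ge \frac12\, d_{\mathrm{eucl}}$. Also, since $y\sqrt{P(y)}$ is continuous and positive on $(0,\infty)$ with limits $\frac{\sqrt3}{2}$ as $y\to 0$ and $+\infty$ as $y\to\infty$, it is bounded below by some $c>0$, so $\sqrt{P(y)}\ge c/y$ and therefore $d_g(p,q)\ge c\,\lvert \log y_p - \log y_q\rvert$. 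Given these, I would run the usual Cauchy-sequence argument: a $d_g$-Cauchy sequence is $d_{\mathrm{eucl}}$-Cauchy, hence converges in $\Real^2$ to a point $(\bar x,\bar y)$ with $\bar x$ finite and $\bar y\ge 0$; the second estimate forbids $\bar y=0$, so the limit lies in $\Sigma^2$, and local comparability of $g$ with the flat metric gives convergence in $d_g$. Thus $(\Sigma^2,g)$ is complete.

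For the curvature I would use that a conformal metric $g=P(y)(dx^2+dy^2)$ has Gauss curvature $K=-\frac{1}{2P}(\log P)''$, and inserting $P$ from (\ref{eqn:Py}) yields the explicit expression (\ref{eqn:GuassK}). Negativity is immediate, since the factor $96\,e^{2y}$, the numerator $e^{8y}+2e^{6y}+18e^{4y}+2e^{2y}+1$, and the denominator are all positive. For the lower bound I would substitute $t=e^{2y}>1$, turning (\ref{eqn:GuassK}) into
\[
K = -\frac{96\, t\,(t^4+2t^3+18t^2+2t+1)}{(t^2+10t+1)^3},
\]
so that
\[
K+\frac53 = \frac{f(t)}{3(t^2+10t+1)^3}, \qquad f(t)=5(t^2+10t+1)^3-288\, t\,(t^4+2t^3+18t^2+2t+1).
\]
Everything then reduces to proving $f\ge 0$, which follows from the factorization
\[
f(t)=(t^2-14t+1)^2\,(5t^2+2t+5),
\]
checked by expansion: the quadratic $5t^2+2t+5$ has negative discriminant, so $f\ge 0$, giving $K\ge-\frac53$ with equality exactly at $t^2-14t+1=0$, i.e. $t=7+4\sqrt3$, equivalently $y=\log(2+\sqrt3)$.

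The only real obstacle is locating the factorization of $f$, and this can be predicted rather than guessed. Since $K$ attains its minimum, $f$ must have a double root at the minimizing $t$; and because every polynomial in $t$ appearing in $K$ is palindromic, $K(1/t)=K(t)$, which pairs that root with a second double root at $1/t=7-4\sqrt3$. As $\bigl(t-(7+4\sqrt3)\bigr)\bigl(t-(7-4\sqrt3)\bigr)=t^2-14t+1$, the square $(t^2-14t+1)^2$ divides the degree-six polynomial $f$, and matching the leading coefficient, the constant term, and the value $f(1)=1728$ fixes the complementary quadratic factor $5t^2+2t+5$. With the factorization in hand the bound is transparent.
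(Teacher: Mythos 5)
Your proof is correct and follows essentially the same route as the paper: completeness is deduced from the behavior of $\sqrt{P}$ near $y=0$ and $y=\infty$, and the curvature statement from the explicit formula (\ref{eqn:GuassK}). You in fact supply details the paper omits — the metric-comparison and Cauchy-sequence argument that makes completeness rigorous, and the factorization $f(t)=(t^2-14t+1)^2(5t^2+2t+5)$ certifying $K\ge -\frac{5}{3}$ with equality exactly at $y=\log(2+\sqrt 3)$, which the paper merely asserts; both check out (expanding your $f$ gives $5t^6-138t^5+939t^4+116t^3+939t^2-138t+5$, matching the product, and $5t^2+2t+5>0$).
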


\begin{remark}\label{rem:Gaussdecay}
Let $r(y)$ denote the distance function to a fixed horizontal line $l$, say $l=\set{(x, 1): x\in \Real}$. Then, by (3.1) and (3.2),  we have the following asymptotic properties: 

$$r(y) \sim \half y \qquad \text{and} \qquad K(y) \sim -96 e^{-4 r(y)}$$ as $y \To \infty$.  In particular, $ K(y)$ approaches zero exponentially fast when $y \To \infty$. 
\end{remark}

Figure \ref{fig:geodesicSigma} shows the typical geodesics on $\Sigma^2$. We sketch the geodesics passing through the $y$-axis at the point $(0,a)$ with $a>0$. The others can be obtained by translation in $x$-direction. 

\begin{itemize}
\item The vertical dashed blue line is of type (i) and it has constant value of $x$. 
\item The red curves are of type (ii) and they have horizontal tangent vector. 
\item The green curves are of type (iii) and they are asymptotic to $y = \log\abs{x}$ for large $\abs{x}$.
\item The purple curves are of type (iv) and they are asymptotic to the rays $y = x\tan\theta$ for large $\abs{x}$ with $0 < \theta < \pi$. 
\end{itemize}

\begin{center}
\begin{figure}[!htp]
\includegraphics[scale=0.9]{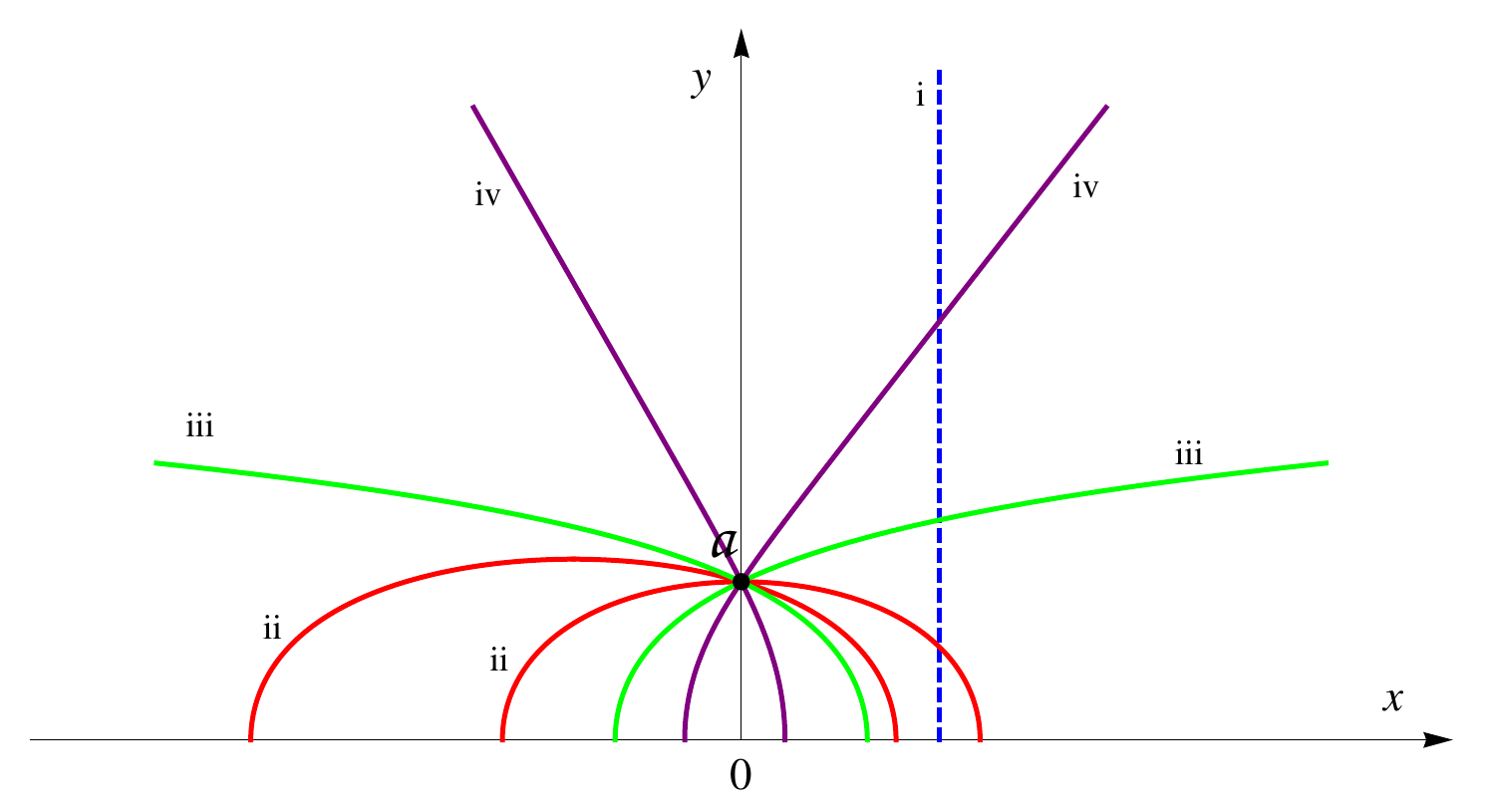}
\caption{Typical geodesics on $\Sigma^2$}\label{fig:geodesicSigma}
\end{figure}
\end{center}

\noindent For the analytic formula of the geodesics of each type, see Proposition \ref{prop:Sigmageodesics} below.

\begin{proposition}\label{prop:Sigmageodesics}
All geodesics of $\Sigma^2$ can be obtained by translation in $x$-direction, the reflection about the $y$-axis or their combinations from the following ones through some point $(0,a)$ with $a>0$:  
\begin{enumerate}[(i)]
\item The $y$-axis.
\item The geodesic has the horizontal tangent vector at $(0,a)$ and it is given by the following formula
\begin{equation}\label{eqn:geodesicxyhorizontal}
x(y) =  \pm \frac{\sqrt{e^{2a}+ 10 + e^{-2a}}}{4\sqrt 3}\tan^{-1}\left(\frac{\sqrt 2 \cosh y \sqrt{\cosh(2a)-\cosh(2y)}}{\cosh(2y) - \sinh^2 a}\right),
\end{equation}
with $y\in (0,a]$.
\item The geodesic has the slope of the tangent vector $m = \frac{\sqrt 3}{\sinh a}$ at $(0,a)$ and it is given by the following formula 
\begin{equation}\label{eqn:geodesicxycosh}
x(y) = \frac{1}{\sqrt 3}\left(\cosh y - \cosh a\right)
\end{equation}
with $y \in (0, \infty)$.
\item The geodesic has the slope of the tangent vector $m > \frac{\sqrt 3}{\sinh a}$ at $(0,a)$ and it is given by the following formula 
\begin{equation}\label{eqn:geodesicxylog}
x(y) = F(y) - F(a)
\end{equation}
with $y \in (0, \infty)$ and 
\begin{eqnarray}
F(y) & = & \frac{\sqrt{3+\sinh^2 a}}{\sqrt{-3+m^2 \sinh^2 a}}\log\Bigg{\{}(-3+m^2 \sinh^2 a)\cosh(y)  \nonumber \\
& & + \sqrt{-3+m^2 \sinh^2 a}\sqrt{(-3+m^2 \sinh^2 a)\cosh^2 y + 3\cosh^2 a + 2m^2 \sinh^2 a}\Bigg{\}}. \label{eqn:geodesicF}
\end{eqnarray}
\end{enumerate}
\end{proposition}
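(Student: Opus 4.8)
The plan is to exploit the $x$-translational symmetry of the metric and collapse the geodesic system to a single quadrature. First I would put $P$ into hyperbolic form: using $e^{2y}-1 = 2e^{y}\sinh y$ and $e^{4y}+10e^{2y}+1 = 4e^{2y}(\cosh^2 y + 2)$, equation \eqref{eqn:Py} becomes
\[
P(y) = \frac{\cosh^2 y + 2}{4\sinh^2 y} = \frac{3 + \sinh^2 y}{4\sinh^2 y} = \frac14 + \frac{3}{4\sinh^2 y},
\]
so $P$ is strictly decreasing on $(0,\infty)$ with $P>\tfrac14$ and $P\To\tfrac14$ as $y\To\infty$. Since $ds^2 = P(y)(dx^2+dy^2)$ depends only on $y$, both the translations $x\mapsto x+x_0$ and the reflection $x\mapsto -x$ are isometries; hence every geodesic is an $x$-translate (possibly composed with the reflection) of one passing through a point $(0,a)$, and it suffices to describe the latter.

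Next I would write down the first integrals. The vector field $\pd_x$ is Killing, so along a unit-speed geodesic $(x(t),y(t))$ one has Clairaut's relation together with the speed normalization,
\[
P(y)\,\dot x = c, \qquad P(y)\left(\dot x^2 + \dot y^2\right) = 1.
\]
Eliminating the parameter gives the single first-order equation
\[
\frac{dx}{dy} = \pm\frac{c}{\sqrt{P(y) - c^2}},
\]
and the substitution $u = \cosh y$ (so $\sinh^2 y = u^2-1$) reduces everything to the elementary integral
\[
x = \pm\int \frac{2c\,du}{\sqrt{(1-4c^2)u^2 + (2+4c^2)}} + \mathrm{const}.
\]
The constant $c\geq 0$ (fixed up to the reflection) is the only modulus, and the qualitative type of the geodesic is governed entirely by the sign of $1-4c^2$.

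The classification then falls out by cases. For $c=0$ we get $\dot x\equiv 0$, the $y$-axis, i.e.\ type (i). For $c^2>\tfrac14$ the radicand is a downward parabola in $u$, the integral is an inverse sine, and $\dot y$ vanishes at the unique height $y_{\max}$ with $P(y_{\max})=c^2$; translating that turning point onto the $y$-axis yields the horizontal-tangent geodesic \eqref{eqn:geodesicxyhorizontal} of type (ii). For $c^2=\tfrac14$ the radicand is constant and the integral is linear in $u=\cosh y$, giving \eqref{eqn:geodesicxycosh} of type (iii). For $0<c^2<\tfrac14$ the integral is an inverse hyperbolic sine, i.e.\ a logarithm, giving \eqref{eqn:geodesicxylog}--\eqref{eqn:geodesicF} of type (iv). To match the stated slope $m = (dy/dx)|_{(0,a)}$ with $c$, I would use $c^2 = P(a)/(1+m^2)$, from which $c^2=\tfrac14\iff m=\sqrt3/\sinh a$ and $0<c^2<\tfrac14\iff m>\sqrt3/\sinh a$. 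The asymptotics claimed in the list before the proposition also follow here: when $c^2<\tfrac14$ one has $\dot y/\dot x\To$ a finite nonzero constant, so the geodesic is asymptotic to a ray $y=x\tan\theta$, whereas when $c^2=\tfrac14$, $\cosh y\sim\tfrac12 e^{y}$ forces $y\sim\log\abs{x}$.

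The routine but genuinely fiddly part, and the main obstacle, is carrying out the three integrations and massaging the antiderivatives into \emph{exactly} the stated closed forms. For type (iv) this means matching the prefactor and rescaling the argument of the logarithm by a positive constant (which is absorbed into $F(y)-F(a)$); using $c^2=P(a)/(1+m^2)$ one checks that $1-4c^2 = (m^2\sinh^2 a - 3)/(\sinh^2 a\,(1+m^2))$, that the resulting prefactor is $\sqrt{3+\sinh^2 a}/\sqrt{-3+m^2\sinh^2 a}$, and that the quantity under the inner root is $(-3+m^2\sinh^2 a)\cosh^2 y + 3\cosh^2 a + 2m^2\sinh^2 a$, reproducing \eqref{eqn:geodesicF}. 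For type (ii), the antiderivative $\arcsin(\cosh y/\cosh a)$ must be converted, via $\arcsin z = \arctan\!\big(z/\sqrt{1-z^2}\big)$ and the tangent double-angle identity, into the expression $\half\tan^{-1}$ of the argument in \eqref{eqn:geodesicxyhorizontal}; this is precisely where the factor $\half$ and the more complicated $\tan^{-1}$-argument originate, and one verifies the prefactor agrees since $\sqrt{e^{2a}+10+e^{-2a}} = 2\sqrt{3+\sinh^2 a}$. Once these identities are confirmed the proof is complete, a maximal geodesic being determined up to the two isometries above by the single constant $c$.
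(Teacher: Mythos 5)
Your proposal is correct and follows essentially the same route as the paper: both reduce the geodesic system to the single quadrature $\frac{dx}{dy} = \pm c/\sqrt{P(y)-c^2}$ (the paper's equation \eqref{eqn:dxdym} is exactly this after substituting $c^2 = P(a)/(1+m^2)$ and $P(y) = \frac14 + \frac{3}{4\sinh^2 y}$) and then integrate in the three cases governed by the sign of $1-4c^2$, equivalently of $m^2\sinh^2 a - 3$, reducing the remaining case $c^2>\frac14$ to a translate of the horizontal-tangent geodesic (ii) exactly as the paper does. The only real difference is in how the first-order equation is obtained --- you invoke the Killing field $\pd_x$ and the unit-speed normalization, whereas the paper computes the Christoffel symbols and integrates the second-order system directly (its constant $C_1$ is your Clairaut constant up to normalization) --- and your verifications $1-4c^2 = (m^2\sinh^2 a - 3)/\big(\sinh^2 a\,(1+m^2)\big)$, $2+4c^2 = (3\cosh^2 a + 2m^2\sinh^2 a)/\big(\sinh^2 a\,(1+m^2)\big)$ and $\sqrt{e^{2a}+10+e^{-2a}} = 2\sqrt{3+\sinh^2 a}$ all check out, so the closed forms \eqref{eqn:geodesicxyhorizontal}--\eqref{eqn:geodesicF} are recovered correctly.
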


\begin{remark}
The geodesics of type (i), (iii) and (iv) have no horizontal tangent vector at any point. If a geodesic has a horizontal tangent vector somewhere, then it can be obtained by translation in $x$-direction from a geodesic of type (ii). 
\end{remark}

\begin{proof} First of all, the nonzero Christoffel symbols at any point $(x, y)$ are given by
\[
\Gamma^1_{12} = \Gamma^1_{21} = - \Gamma^2_{11} = \Gamma^2_{22} = k(y)
\]
with
\[
k(y) =  -\frac{12 e^{2y}(e^{2y}+1)}{e^{6y} + 9 e^{4y} - 9e^{2y}-1}.
\]
So the geodesic equations are given by
\begin{eqnarray}
x''(t) + 2 k(y) x'(t) y'(t) & = & 0   \label{eqn:geodesicx} \\
y''(t) + k(y)\left(y'(t)^2 - x'(t)^2\right) & = & 0. \label{eqn:geodesicxi}
\end{eqnarray} 
It is obvious that all vertical (half) lines are geodesics. Next,  for any $x_0\in \Real$ and $a>0$ we consider the geodesic $\gamma (t)$ passing through $\gamma(0) = (x_0,a)\in \Sigma^2$ with tangent vector $\gamma'(0)=(1,m)$.  Since the metric is invariant under the translation in $x$-coordinate and the reflection about the $y$-axis, we may assume that $x_0 = 0$ and $m \geq 0$.

From equation (\ref{eqn:geodesicx}) we get 
\begin{equation}\label{eqn:geodesicdxt}
x'(t) = \frac{C_1 (e^{2y}-1)^2}{e^{4y}+10 e^{2y}+1}
\end{equation}
for some constant $C_1$. Since $x'(0) = 1$ and $y(0) = a$ we have
\[
C_1 = \frac{e^{4a}+10 e^{2a}+1}{(e^{2a}-1)^2}.
\]
Let $s(y) = t'(y)^2$, then the second equation (\ref{eqn:geodesicxi}) can be written as
\begin{eqnarray*}
0 & = & \frac{s'(y)}{2s(y)}\left(e^{2y}-1\right)\left(e^{4y}+10 e^{2y}+1\right)^3 - 12 C_1^2 \left(e^{2y}-1\right)^4 \left(e^{4y}+e^{2y}\right)s(y) \\
& & + 12\left(e^{2y}+ e^{4y}\right)\left(1+ 10 e^{2y}+ e^{4y}\right)^2. 
\end{eqnarray*}
It follows that 
\begin{equation}\label{eqn:dsxi}
s(y) = \frac{\left(e^{4y}+10 e^{2y} +1\right)^2}{\left(e^{2y}-1\right)^2\left(C_2\left(e^{4y}+10 e^{2y}+1\right)+12 C_1^2 e^{2y}\right)}
\end{equation}
for some constant $C_2$. 

When $m=0$, since $y'(0)=0$ with $y(0) = a$ the denominator of $s(a)$ vanishes and thus we have
\[
C_2 = - \frac{12e^{2a}\left(e^{4a}+10 e^{2a}+1\right)}{\left(e^{2a}-1\right)^4}.
\]
Equations (\ref{eqn:geodesicdxt}) and (\ref{eqn:dsxi}) imply that 
\begin{equation}\label{eqn:geodesicdxdy}
\frac{dx}{dy} = \pm\frac{\sqrt{e^{2a}+10 + e^{-2a}}}{\sqrt 6} \frac{\sinh(y)}{\sqrt{\cosh(2a)-\cosh(2y)}}.
\end{equation}
Integrating the equation above yields the formula (\ref{eqn:geodesicxyhorizontal}). 

When $m > 0$, the constant $C_2$ can be solved from equation $s(a) = 1/m^2$ as
\begin{eqnarray*}
C_2 = \frac{\left(e^{4a}+10 e^{2a}+1\right)\left(m^2 (e^{2a}-1)^2 -12e^{2a}\right)}{\left(e^{2a}-1\right)^4} = m^2 C_1 - \frac{3C_1}{\sinh^2 a}
\end{eqnarray*}
and we have
\begin{eqnarray}
\frac{dx}{dy} & = &  \frac{\sqrt{10+2\cosh(2a)}\sinh(y)}{\sqrt{(6+5m^2)\cosh(2a)- \left(6+m^2 -m^2 \cosh(2a)\right)\cosh(2y) - 5m^2}} \nonumber \\
& = &  \frac{\sqrt{3+\sinh^2 a}\sinh(y)}{\sqrt{-(3-m^2 \sinh^2 a)\cosh^2 y + 3 \cosh^2 a + 2m^2 \sinh^2 a}}. \label{eqn:dxdym}
\end{eqnarray}

We separate the discussion into three different cases according to the sign of the coefficient $3-m^2 \sinh^2 a$. If $3 - m^2 \sinh^2(a) > 0$, i.e., $m^2 < \frac{3}{\sinh^2 a}$, then equation (\ref{eqn:dxdym}) has the solution $x(y) = H(y) - H(a)$ with $y\in (0, y_0]$ and the function $H$ is given by 
\begin{equation*}
H(y) = \frac{\sqrt{3+\sinh^2 a}}{\sqrt{3-m^2 \sinh^2 a}}\tan^{-1}\left(\frac{\sqrt{3-m^2 \sinh^2 a}\cosh(y)}{\sqrt{-(3-m^2\sinh^2 a)\cosh^2 y + 3 \cosh^2 a + 2m^2 \sinh^2 a }}\right)
\end{equation*}
with 
\[
y_0= \cosh^{-1}\sqrt{\frac{3 \cosh^2 a + 2m^2 \sinh^2 a}{3-m^2 \sinh^2 a}}
\]
Note that the geodesic $\gamma$ has horizontal tangent vector at $(x(y_0), y_0)$ and it can be obtained by translation in $x$-direction $x \mapsto x + x(y_0)$ from the geodesic in case (ii) with $a = y_0$. 

If $3-m^2 \sinh^2(a) = 0$, i.e., $m^2 = \frac{3}{\sinh^2 a}$, then equation (\ref{eqn:dxdym}) has the solution $x(y)$ given in case (iii). 

If $3-m^2 \sinh^2 (a) < 0$, i.e., $m^2 > \frac{3}{\sinh^2 a}$, then equation (\ref{eqn:dxdym}) has the solution as in case (iv). This finishes the proof.
\end{proof}

\begin{thm}\label{thm:geometriccompactification}
The geometric compactification of $\Sigma^2$ is homeomorphic to the half disk $\tilde{\Sigma} = \set{(u,v) \in \Real^2 : u^2 + v^2\leq 1, v\geq 0 }$. On the geometric boundary $S_\infty(\Sigma) = \pd \tilde{\Sigma}$ we have
\begin{enumerate}
\item the geodesics that approach to points of the real line $\set{(x,y) \in \Real^2 : y=0}$ are identified with points of the interval $\set{-1 < u< 1, v=0}$,
\item the vertical half lines with $y \To \infty$ are identified with the point $(0, 1)$,
\item the geodesics asymptotic to $x =\pm \frac{1}{\sqrt 3}\left(\cosh y - \cosh a\right)$ for some $a>0$ are identified with the point $(\pm 1,0)$, 
\item the geodesics asymptotic to $x = \pm\left(F(y) - F(a)\right)$ with $F$ given in equation (\ref{eqn:geodesicF}) for some $a>0$ are identified with the point $\omega_\theta = (\cos\theta, \sin\theta)$($\theta \in (0, \pi/2) \cup (\pi /2, \pi)$) with 
\begin{equation}\label{eqn:tanthetam}
\tan \theta = \pm \frac{\sqrt{-3 + m^2 \sinh^2 a}}{\sqrt{3+\sinh^2 a}}
\end{equation}
and $m > \frac{\sqrt 3}{\sinh a}$.
\end{enumerate}
\end{thm}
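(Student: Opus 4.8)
The plan is to exploit that $(\Sigma^2, ds^2)$ is a Cartan--Hadamard surface: being complete, simply connected and negatively curved (Proposition \ref{prop:surfacecomplete}), its geometric compactification is defined through the cone (visual) topology, and $\Sigma^2\cup S_\infty(\Sigma^2)$ is homeomorphic to a closed $2$-disk with $S_\infty(\Sigma^2)\cong\sph^1$. Fixing a base point $o=(0,a_0)$, I would use the standard fact that two distinct geodesic rays issuing from $o$ diverge, so the visual map $v\mapsto[\gamma_v]$ identifies the unit circle $S_o\Sigma^2$ with $S_\infty(\Sigma^2)$ bijectively. Thus the whole content of the theorem is to read off, for each ray $\gamma_v$, \emph{which} point of the half-disk boundary it determines, and then to verify that the resulting correspondence is a homeomorphism compatible with the cone topology.

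First I would tabulate the asymptotic behavior of $\gamma_v$ from the explicit integration in Proposition \ref{prop:Sigmageodesics}. Parametrizing the rays from $o$ by their slope $m$ together with the reflection $x\mapsto-x$, they fall into the four regimes of the statement: for slopes below the critical value $\sqrt3/\sinh a_0$ the forward ray eventually descends to $\set{y=0}$ at a finite abscissa $x_\infty$ (this absorbs types (i)-downward and (ii)); at the critical slope it is the type (iii) geodesic, asymptotic to $y=\log\abs{x}$ with $x\To\pm\infty$; above the critical slope it is the type (iv) geodesic, asymptotic to the Euclidean ray $y=x\tan\theta$ with $\theta$ given by (\ref{eqn:tanthetam}); and the vertical ray has $y\To\infty$ with $x$ bounded. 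I would then define $\Phi$ on $S_\infty(\Sigma^2)$ by sending the descending rays to the point of the diameter $\set{-1<u<1,\,v=0}$ prescribed by $x_\infty$ through a fixed increasing homeomorphism $\Real\To(-1,1)$ normalized so that $x_\infty=0$ maps to the origin, sending type (iv) rays to $\omega_\theta$, type (iii) rays to $(\pm1,0)$, and the vertical ray to $(0,1)$.

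Next I would check that $\Phi\colon\sph^1\To\pd\tilde\Sigma$ is a continuous bijection onto the boundary circle, namely the diameter together with the semicircle. Injectivity and surjectivity come from the Cartan--Hadamard identification together with the monotone dependence of the endpoint data $x_\infty$ and $\theta$ on the slope, which is visible from the explicit formulas; this traces $\pd\tilde\Sigma$ exactly once as $v$ makes a full rotation. Continuity within each regime is immediate, so the real issue is continuity across the finitely many transitional directions. The delicate transitions are at the corner points $(\pm1,0)$, where I must show that type-(iv) rays with $\theta\To0^+$ \emph{and} descending rays with $x_\infty\To\pm\infty$ converge in the cone topology to the \emph{same} boundary point; this is precisely the limit asserted in Remark (b) following Theorem \ref{thm:Martinboundary}.

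The main obstacle is this topological matching at the corners, and more generally the reconciliation of the cone topology on $\tilde\Sigma$ with the half-disk topology. I would settle it with distance estimates in the two asymptotic regimes furnished by the explicit metric. As $y\To0$ one has $\sqrt{P(y)}\sim\tfrac{\sqrt3}{2y}$, so $ds^2$ is asymptotically a constant multiple of the hyperbolic metric and descending rays with a common abscissa $x_0\in\set{y=0}$ are mutually asymptotic; as $y\To\infty$ one has $P(y)\To\tfrac14$ with $K$ decaying exponentially (Remark \ref{rem:Gaussdecay}), so the surface is asymptotically flat and rays sharing a Euclidean direction are mutually asymptotic, while the horizontal direction $\theta=0$ is the common limit both of the type-(iv) directions and of the horizontal escape $x\To\pm\infty$ along $\set{y=0}$. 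Once these estimates pin down the asymptote classes and confirm the prescribed limits in the cone topology, $\Phi$ extends to a continuous bijection of the compact space $\tilde\Sigma$ onto the half-disk, hence a homeomorphism, and cases (1)--(4) follow.
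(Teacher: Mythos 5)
Your proposal is correct and follows essentially the same route as the paper: fix a base point, identify $S_\infty(\Sigma^2)$ with the circle of directions there, and use the explicit integrations of Proposition \ref{prop:Sigmageodesics} to sort the rays into the four regimes and read off the boundary point each one determines. The only difference is one of care rather than of method — the paper simply asserts the resulting correspondence is a homeomorphism, whereas you explicitly flag and propose to verify the continuity across the transitional directions (in particular at the corners $(\pm 1,0)$) via the asymptotic behavior of the metric as $y\To 0$ and $y\To\infty$.
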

\begin{proof}
Without loss of generality we consider the geodesic $\gamma(t)$ starting from the point $(0,1)\in \Sigma$, i.e., $\gamma(0) = (0,1)$. We choose the parameter $t$ such that $\abs{\gamma'(0) } = 1$. If $\gamma'(0) = (0,1)$, then $\gamma(t)$ with $t>0$ is the $y$-axis with $y>1$. If $\gamma'(0) = (0,-1)$, then it approaches to $(0,0) \in \Real^2$ along the $y$-axis with $0<y<1$. Next we assume that $\gamma'(0)$ is not parallel to the $y$-axis. Denote by $\phi$ the angel from the positive $x$-axis to $\gamma'(0)$ and $m = \tan \phi$. We consider the case when $\phi \in (-\pi/2, \pi/2)$. The argument for $\phi\in (\pi/2, 3\pi/2)$ is similar. 

From the proof of Proposition \ref{prop:Sigmageodesics}, we see that when $m < \frac{\sqrt 3}{\sinh(1)}$, $\gamma(t)$ approaches to the positive $x$-axis as $t\To \infty$ and the limits $\lim_{t\To \infty}\gamma(t)$ cover the whole positive $x$-axis $(0, \infty)$. When $m = \frac{\sqrt{3}}{\sinh(1)}$, the geodesic is given by $x = \frac{1}{\sqrt 3}\left(\cosh y - \cosh 1\right)$ and it is asymptotic to the curve $y = \log x$ for large $x>0$. When $m > \frac{\sqrt 3}{\sinh(1)}$, the geodesic is given by $x = F(y) - F(1)$ as $F$ in equation (\ref{eqn:geodesicF}). It is asymptotic to the ray $y = \frac{\sqrt{-3 + m^2 \sinh^2(1)}}{\sqrt{3+ \sinh^2(1)}} x$ for large $x>0$.  So we have the homeomorphism from the directions at $(0,1) \in \Sigma$ to $S_\infty(\Sigma)$ such that $\phi = -\pi/2$ is identified with $(0,0)$, $\phi = \pi/2$ with $(1,0)$, $m \in (-\infty, \sqrt{3}/\sinh(1))$ with $\set{(u, 0): 0<u <1}$, $m = \sqrt{3}/\sinh(1)$ with $(0,1)$, and $m \in (\sqrt 3/\sinh(1), \infty)$ with $(\cos\theta, \sin\theta)$($0< \theta < \pi/2$) by equation (\ref{eqn:tanthetam}) with the plus sign. 
\end{proof}

\medskip{}

\section{The minimal Green's function of the operator $\calL = \Delta -1$}

In this section we prove an integral formula of the Green's function $G(x, y, \xi, \eta)$  of 
$\calL = \Delta - 1$, see Theorem \ref{thm:Greenintegral}. Recall that the Laplace operator of $(\Sigma^2, ds^2)$ is given by $$\Delta=P^{-1}(y) (\pd_x^2 + \pd_y^2),$$ so we have $$\calL = \Delta - 1= P^{-1}(y) (\pd_x^2 + \pd_y^2 - P(y)).$$
Here and in Section 5 we shall use various properties of certain special functions, e.g., the Gamma function, the Gauss hypergeometric function, etc. We refer the reader to \cite{DLMF} and \cite{OLBC} for more details.  

\smallskip

First of all, we define a few relevant functions for the rest of the paper. These functions arise in the study of the spectral properties of the differential operator $A = -D^2_x + P(x)$ defined on $(0, \infty)$, see Appendix \ref{sec:STspectrum}. For any complex number $\lambda \in\Cpx$, we write 
\[
\frac 1 4 - \lambda = re^{i\phi}\quad \text{ with } r\geq 0 \text{ and }\phi \in [-\pi, \pi)
\]
and define the function 
\begin{equation}\label{eqn:alphaphi}
\alpha(\lambda) = \sqrt{\frac 1 4 - \lambda} = \sqrt r e^{i\frac \phi 2}.
\end{equation}
If we use the phase angle $\theta \in [0, 2\pi)$ instead and  write
\[
\lambda - \frac 1 4 = r e^{i \theta}\quad \text{ with } r> 0 \text{ and } \theta \in[0, 2\pi),
\] 
then we have
\begin{equation}\label{eqn:alphatheta}
\alpha(\lambda) =  - i \sqrt{r}e^{i\frac{\theta}{2}}.
\end{equation}
The function $\alpha(\lambda)$ has branch cut along $[\frac 1 4, \infty)$. Using $\alpha = \alpha(\lambda)$ we define the following functions for $\lambda \in \Cpx$ and $y> 0$:
\begin{equation}
a(\lambda) = - \frac{\Gamma(1+\alpha)\Gamma\left(\frac 3 2 - \alpha\right)}{\Gamma(1-\alpha)\Gamma\left(\frac 3 2 + \alpha\right)},
\end{equation}
\begin{eqnarray}
w_1(\lambda, y) & = & e^{(1-\alpha)y}\left(e^{2y}-1\right)^{-\half}\hF{-\half, -\half +\alpha, 1+\alpha; e^{-2y}},\\
w_2(\lambda, y) & = & e^{(1+\alpha)y}\left(e^{2y}-1\right)^{-\half}\hF{-\half, -\half -\alpha, 1-\alpha; e^{-2y}}.
\end{eqnarray}
Here $\hF{a,b,c; z} =\ _2 F_1(a,b,c;z)$ is the Gauss hypergeometric function. We also denote 
\begin{eqnarray}
w_a(\lambda, y) & = & w_1 + a(\lambda) w_2, \\
w_b(\lambda, y) & = & w_1.
\end{eqnarray}
The Wronskian of $w_a$ and $w_b$ is computed in the proof of Theorem A.2 and is given by
\begin{equation}
W(w_a,w_b) = -2\alpha a(\lambda).
\end{equation}

We define the following differential operators: 
\begin{eqnarray}
L_1 & = & \frac{d^2}{dx^2} - \lambda, \\
L_2 & = & \frac{d^2}{dy^2} - P(y) + \lambda. 
\end{eqnarray}

\begin{lemma}
The Green functions $G_i$ of $L_i$ ($i=1,2$) are given by
\begin{eqnarray}
G_1(x, \xi, - \lambda) & = & \frac{e^{-\abs{x-\xi}\sqrt{\lambda}}}{2\sqrt{\lambda}}, \\
G_2(y, \eta, \lambda) & = & \frac{w_1(\lambda, y)w_1(\lambda, \eta)}{2\alpha a(\lambda)} + \frac{w_2(\lambda, y)w_1(\lambda, \eta)}{2\alpha} \label{eqn:G2w1w2} 
\end{eqnarray}
for $0 < y \leq \eta < \infty$. Here $\sqrt{\lambda} = \sqrt{\rho} e^{i \frac{\phi}{2}}$ if $\lambda = \rho e^{i\phi}$ with $\rho\geq 0$ and $\phi \in [-\pi, \pi)$. \\
Moreover the Green function $G_1(x,\xi, -\lambda)$ has the branch cut along $\lambda \in (-\infty, 0]$. 
\end{lemma}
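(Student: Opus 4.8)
The plan is to build each one-dimensional Green's function from the standard Sturm--Liouville recipe: if $u$ solves $L_i u = 0$ and satisfies the boundary condition at the left endpoint, and $v$ solves $L_i v = 0$ and satisfies the boundary condition at the right endpoint, then the minimal Green's function normalized by $L_i G = -\delta$ is
\[
G(s,t) = \frac{u(s_<)\,v(s_>)}{-W(u,v)},
\]
where $s_<=\min(s,t)$, $s_>=\max(s,t)$, and $W(u,v)=uv'-u'v$ is the constant Wronskian. The sign and normalization here are fixed by continuity at $s=t$ together with the unit jump $\pd_s G|_{t^+}-\pd_s G|_{t^-}=-1$, which is exactly the condition equivalent to $L_iG=-\delta$ in the sense of Definition \ref{defn:Greenfunction}.

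For $G_1$, the equation $L_1 u = u'' - \lambda u = 0$ on $\Real$ has the fundamental system $e^{\pm\sqrt{\lambda}\,x}$, and with $\sqrt{\lambda}$ chosen as in the statement one has $\Re\sqrt{\lambda}>0$ for $\lambda\notin(-\infty,0]$; hence $e^{\sqrt{\lambda}x}$ is recessive at $-\infty$ and $e^{-\sqrt{\lambda}x}$ is recessive at $+\infty$. Their Wronskian is $-2\sqrt{\lambda}$, so the recipe yields $G_1(x,\xi,-\lambda)=e^{-\abs{x-\xi}\sqrt{\lambda}}/(2\sqrt{\lambda})$ directly. The branch-cut claim is then immediate: by definition $\sqrt{\lambda}$ is holomorphic on $\Cpx\setminus(-\infty,0]$ and discontinuous across the negative real axis, and both $e^{-\abs{x-\xi}\sqrt{\lambda}}$ and $1/\sqrt{\lambda}$ inherit this, so $G_1(x,\xi,-\lambda)$ has its branch cut precisely along $\lambda\in(-\infty,0]$.

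For $G_2$ the two endpoints of $(0,\infty)$ must be treated separately. I would first record that $w_1,w_2$ form a fundamental system for $L_2 u = u''-P(y)u+\lambda u = 0$, i.e. for $(A-\lambda)u=0$ with $A=-D_y^2+P$, and that $W(w_a,w_b)=-2\alpha a(\lambda)$; both are established in Appendix A (Theorem A.2). At $y=\infty$, where $P(y)\to\tfrac14$, the asymptotics $(e^{2y}-1)^{-\half}\sim e^{-y}$ together with $\hF{-\half,-\half\pm\alpha,1\pm\alpha;e^{-2y}}\to 1$ give $w_1\sim e^{-\alpha y}$ and $w_2\sim e^{\alpha y}$; thus for $\Re\alpha>0$ the solution $w_b=w_1$ is recessive at $+\infty$. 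At $y=0$, where $P(y)\sim\tfrac34 y^{-2}$, the indicial exponents are $3/2$ and $-1/2$, the endpoint is limit point, and the boundary condition selects the solution behaving like $y^{3/2}$ rather than the non-$L^2$ behavior $y^{-1/2}$. Applying the connection formula for $\hF{a,b,c;z}$ at $z=1$ (valid since $c-a-b=2>0$ for both hypergeometric factors), each of $w_1,w_2$ is $\sim C_i\,(2y)^{-\half}$ near $y=0$, and the specific value of $a(\lambda)$ in the statement is exactly the coefficient that cancels these $y^{-1/2}$ terms in $w_a=w_1+a(\lambda)w_2$, leaving $w_a\sim y^{3/2}$; hence $w_a$ is the solution satisfying the boundary condition at $0$.

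It then remains to assemble. For $0<y\le\eta$ we have $s_<=y$, $s_>=\eta$, so with $u=w_a$ and $v=w_b$ the recipe together with $-W(w_a,w_b)=2\alpha a(\lambda)$ gives $G_2(y,\eta,\lambda)=w_a(\lambda,y)\,w_b(\lambda,\eta)/(2\alpha a(\lambda))$; substituting $w_a=w_1+a(\lambda)w_2$ and $w_b=w_1$ and distributing yields the two displayed terms. The main obstacle is the endpoint analysis at $y=0$ --- verifying that the prescribed $a(\lambda)$ is precisely the coefficient forcing the subordinate boundary behavior --- since this is where the singular potential and the hypergeometric connection formula genuinely enter; by contrast the computation of $G_1$ and the final algebraic recombination are routine.
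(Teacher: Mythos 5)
Your proposal is correct and follows essentially the same route as the paper: the standard Sturm--Liouville formula $G=u(s_<)v(s_>)/(-W(u,v))$, with $w_b=w_1$ recessive at $+\infty$ and $w_a=w_1+a(\lambda)w_2$ selected at $y=0$ (the paper delegates the square-integrability of $w_a$ near $0$ and of $w_b$ near $\infty$, and the Wronskian $W(w_a,w_b)=-2\alpha a(\lambda)$, to the proof of Theorem \ref{thm:expansionS}). Your explicit endpoint analysis --- the indicial exponents $3/2,-1/2$ at $y=0$ and the identification of $a(\lambda)$ via $\hF{a,b,c;1}=\Gamma(c)\Gamma(c-a-b)/(\Gamma(c-a)\Gamma(c-b))$ as the coefficient killing the $y^{-1/2}$ branch --- just fills in details the paper leaves to the appendix.
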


\begin{proof}
The Green function $G_1$ of $L_1$ is well-known. We now show the formula of $G_2$. From the proof of Theorem \ref{thm:expansionS}, we know that $L_2\left(w_a(\lambda , y)\right) = L_2\left(w_b(\lambda, y)\right) = 0$, $w_a$ is square integrable near $y = 0$, and $w_b$ is square integrable near $y = \infty$. Thus, we have  
\[
G_2(y,\eta, \lambda) = \frac{w_a(\lambda, y)w_b(\lambda, \eta)}{W(w_b, w_a)}
\]
and it gives the desired formula after the substitution by $w_1$ and $w_2$.
\end{proof}

In the following we determine the singularities of $G_2(y,\eta, \lambda)$ for fixed $y, \eta \in (0, \infty)$. First we prove an identity of hypergeometric functions in $w_1$ and $w_2$. 

\begin{lemma}\label{lem:connectionformula}
We have the formula
\begin{eqnarray*}
& & \frac{e^{zy}}{\Gamma(1-z)\Gamma(3/2+ z)} \hF{-\half, -\half-z, 1-z; e^{-2y}} \\
& &  - \ \frac{e^{-zy}}{\Gamma(1+z)\Gamma(3/2- z)} \hF{-\half, -\half+z, 1+z; e^{-2y}} \\
& = & \frac{\sqrt{1-e^{-2y}}}{\sqrt{2\pi}}\left(\cosh y\right)^{-z} \tanh^{\frac 3 2}(y) {\sin(\pi z)} \hF{\half z+ \frac 5 4, \half z + \frac 3 4, 2;\tanh^2 y}
\end{eqnarray*}
for any $z\in \Cpx$ and $y \in (0, \infty)$. Here the function $(\cosh y)^{-z}$ is defined by
\[
\left(\cosh y\right)^{-z} = e^{-z\log \cosh y}.
\]
\end{lemma}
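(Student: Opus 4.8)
The plan is to recognize that, after clearing the common factor $\rho(y) = e^{y}(e^{2y}-1)^{-\half}$ and changing variables to $u = e^{-2y}$, both sides of the asserted identity are, for fixed generic $z$, solutions of one and the same second-order linear ODE, namely $L_2 w = 0$ with $\lambda = \frac 1 4 - z^2$; I then pin the solution down by matching its two leading Frobenius coefficients at $u=0$ (that is, as $y\To\infty$). First I would record the elementary relations $\cosh y = (1+u)/(2\sqrt u)$, $\tanh y = (1-u)/(1+u)$, $e^{\pm zy}=u^{\mp z/2}$, and $\rho(y)\sqrt{1-e^{-2y}}=1$, the last of which collapses the awkward prefactors on the right-hand side. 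Multiplying the claimed identity by $\rho$ and using the definitions of $w_1,w_2$ from this section, the left-hand side becomes $c_1 w_2 - c_2 w_1$ with $c_1 = 1/(\Gamma(1-z)\Gamma(\frac 3 2+z))$ and $c_2 = 1/(\Gamma(1+z)\Gamma(\frac 3 2-z))$, which is manifestly a solution of $L_2 w=0$, while the right-hand side becomes $\frac{\sin(\pi z)}{\sqrt{2\pi}}\,\Omega(y)$ with $\Omega(y) = (\cosh y)^{-z}\tanh^{3/2}(y)\,\hF{\half z+\frac 5 4,\half z+\frac 3 4,2;\tanh^2 y}$.

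Next I would verify that $\Omega$ is itself a solution of $L_2 w = 0$. Setting $t = \tanh^2 y$, so that $t' = 2\sqrt t\,(1-t)$, and writing $\Omega = (\cosh y)^{-z}t^{3/4}H(t)$ with $H(t) = \hF{\half z+\frac 5 4,\half z+\frac 3 4,2;t}$, a direct substitution into $\Omega'' - (P(y)-\lambda)\Omega = 0$ reduces — after inserting $P(y) = \frac{e^{4y}+10e^{2y}+1}{4(e^{2y}-1)^2}$ and $\lambda = \frac 1 4 - z^2$ — to the Gauss hypergeometric equation $t(1-t)H'' + [\,2 - (z+3)t\,]H' - (\half z+\frac 5 4)(\half z+\frac 3 4)H = 0$, which $H$ satisfies by definition. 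This computation, though routine, is the technical heart of the argument: the exponent $3/4$ on $t$ and the gauge $(\cosh y)^{-z}$ are chosen precisely to absorb the indicial and gauge factors (note $t^{3/4}\sim y^{3/2}$ and $t^{-1/4}\sim y^{-1/2}$ near $y=0$, matching the indicial exponents of $L_2$) so that the residual equation is exactly hypergeometric. With this in hand, both sides of the reduced identity are solutions of the same second-order equation, so they coincide as soon as their two independent leading coefficients at $u=0$ agree.

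To match those coefficients I would expand as $u\To 0$. The left-hand side is $c_1 u^{-z/2}(1+O(u)) - c_2 u^{z/2}(1+O(u))$, so its leading coefficients along $u^{-z/2}$ and $u^{z/2}$ are $c_1$ and $-c_2$. For the right-hand side the argument of $H$ is $t = \tanh^2 y = \big((1-u)/(1+u)\big)^2\To 1$, so I would apply the Gauss connection formula at unit argument; since $c-a-b = 2 - (\half z+\frac 5 4) - (\half z+\frac 3 4) = -z$, this produces a regular part together with a $(1-t)^{-z}$ part, and $1-t = 4u/(1+u)^2$ converts these into the two powers $u^{z/2}$ and $u^{-z/2}$. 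Reading off the coefficients and simplifying them with the Legendre duplication formula, in the forms $\Gamma(\half z+\frac 3 4)\Gamma(\half z+\frac 5 4) = 2^{-z-\half}\sqrt\pi\,\Gamma(z+\frac 3 2)$ and $\Gamma(\frac 3 4-\half z)\Gamma(\frac 5 4-\half z) = 2^{z-\half}\sqrt\pi\,\Gamma(\frac 3 2-z)$, together with Euler's reflection $\Gamma(w)\Gamma(1-w) = \pi/\sin(\pi w)$, I expect the $u^{-z/2}$ coefficient to collapse to $c_1$ and the $u^{z/2}$ coefficient to $-c_2$, exactly matching the left-hand side.

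Having matched both leading coefficients, I would conclude that the two solutions are identical for all $z$ outside the discrete set where the connection formula degenerates (a Gamma pole, or collision of the exponents $\pm z/2$), and then extend the identity to all $z\in\Cpx$ and all $y\in(0,\infty)$ by analytic continuation, with $(\cosh y)^{-z}$ read through the stated branch $e^{-z\log\cosh y}$. The step I expect to be most delicate is the direct ODE verification for $\Omega$ in the second paragraph, since it requires handling the prefactor $(\cosh y)^{-z}t^{3/4}$ and the change of variable $t=\tanh^2 y$ without error; the Gamma-function bookkeeping (duplication combined with reflection) in the third paragraph is the secondary place where a sign or branch slip could intrude.
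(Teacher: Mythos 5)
Your argument is correct, but it takes a genuinely different route from the paper's. The paper proves the lemma in a few lines by quoting the connection formula for associated Legendre functions \cite[14.9.15]{DLMF} with $\mu=z$, $\nu=\half$, $x=\coth y$, inserting the hypergeometric representations \cite[14.3.6--14.3.7]{DLMF} of $P^{\pm\mu}_\nu$ and $Q^\mu_\nu$, and finishing with the Pfaff transformation $\hF{a,b,c;w}=(1-w)^{-a}\hF{a,c-b,c;\frac{w}{w-1}}$. You instead re-derive that connection formula from scratch in the case $\nu=\half$: after clearing $\rho=e^y(e^{2y}-1)^{-\half}$, both sides solve $L_2w=0$ with $\lambda=\frac14-z^2$ (your $\Omega$ is, up to a constant, $Q^{z}_{1/2}(\coth y)$, which is why the substitution $t=\tanh^2 y$ collapses to the hypergeometric equation — compare the computation in Appendix B of the paper), and $u=e^{-2y}=0$ is a regular singular point with exponents $\pm z/2$, so for $z\notin\Zeit$ matching the two Frobenius coefficients suffices. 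I checked your Gamma bookkeeping: the Gauss connection formula at unit argument with $c-a-b=-z$, together with duplication in the forms you state and Euler reflection, gives the $u^{-z/2}$ coefficient $\frac{\sin(\pi z)}{\pi}\frac{\Gamma(z)}{\Gamma(z+3/2)}=\frac{1}{\Gamma(1-z)\Gamma(3/2+z)}=c_1$ and the $u^{z/2}$ coefficient $\frac{\sin(\pi z)}{\pi}\frac{\Gamma(-z)}{\Gamma(3/2-z)}=-\frac{1}{\Gamma(1+z)\Gamma(3/2-z)}=-c_2$, exactly as you predicted; and since each $\hF{a,b,c;\cdot}$ on the left is paired with $1/\Gamma(c)$, both sides are entire in $z$, so the continuation to all of $\Cpx$ is legitimate. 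What the paper's route buys is brevity, with everything outsourced to \cite{DLMF}; what yours buys is self-containment and a transparent explanation of where the prefactor $(\cosh y)^{-z}\tanh^{3/2}(y)$ and the parameters $\half z+\frac54$, $\half z+\frac34$, $2$ come from (they are forced by the indicial exponents at $y=0$ and the gauge at $y=\infty$), at the cost of the routine but lengthy direct ODE verification for $\Omega$.
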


\begin{proof}
It follows from the connection formula \cite[14.9.15]{DLMF} of Legendre functions that
\[
\frac{P^\mu_\nu(x)}{\Gamma(\nu+\mu +1)} - \frac{P^{-\mu}_\nu(x)}{\Gamma(\nu - \mu +1)} = 2\frac{\sin \mu \pi}{\pi} e^{-\mu \pi i}\frac{Q^{\mu}_{\nu}(x)}{\Gamma(\nu+\mu +1)},
\]
where $\mu, \nu \in \Cpx$ and $x\in \Real$. From \cite[14.3.6-14.3.7]{DLMF}, when $x\in (1, \infty)$ the Legendre functions can be represented by the hypergeometric functions as
\begin{eqnarray*}
P^{\mu}_{\nu}(x) & = & \left(\frac{x+1}{x-1}\right)^{\frac{\mu}{2}}\frac{1}{\Gamma(1-\mu)}\hF{\nu+1, -\nu, 1-\mu; \half - \half x}, \\
Q^{\mu}_{\nu}(x) & = & e^{\mu \pi i}\frac{\sqrt{\pi} \Gamma(\nu+\mu+1)(x^2-1)^{\frac \mu 2}}{2^{\nu+1}x^{\nu+\mu+1}\Gamma(\nu+3/2)} \hF{\half\nu + \half \mu +1, \half \nu+\half\mu+\half, \nu+\frac 3 2; \frac{1}{x^2}}.
\end{eqnarray*}
In the equation of $Q^{\mu}_{\nu}(x)$, it is assumed that $\mu+\nu\ne -1, -2, \ldots$. Applying the connection formula with $\mu = z$, $\nu = \half$ and $x={\coth  y}$, we obtain
\begin{eqnarray*}
& & \frac{e^{z y}}{\Gamma(1-z)\Gamma(3/2+z)}\hF{\frac 3 2, -\half, 1-z; \frac{1}{1-e^{2 y}}} \\
& &  - \ \frac{e^{-z y}}{\Gamma(1+z)\Gamma(3/2-z)}\hF{\frac 3 2, -\half, 1+z; \frac{1}{1-e^{2 y}}} \\
& = &\frac{\sin\pi z}{\sqrt{2\pi}} \frac{\tanh^{\frac 3 2}( y)}{e^{z\cosh y}} \hF{\half z+\frac 5 4, \half z + \frac 3 4,2; \tanh^2 y}.
\end{eqnarray*}
The transformation formula
\[
\hF{a,b,c; w} = (1-w)^{-a} \hF{a, c-b, c; \frac{w}{w-1}}
\]
implies that 
\[
\hF{-\half, -\half \pm z, 1\pm z; e^{-2 y}} = {\left(1-e^{-2 y}\right)^\half}\hF{-\half, \frac 3 2, 1\pm z; \frac{1}{1-e^{2 y}}}
\]
which gives us the desired identity. When $z+\half = -1, -2, \ldots$, the first term on the left hand side vanishes as $\Gamma(3/2 + z)$ goes to infinity and the identity still holds where the hypergeometric functions reduces to polynomials. 
\end{proof}

\begin{lemma}
For any fixed $ y, \eta \in (0, \infty)$, the Green function $G_2( y, \eta, \lambda)$ has the branch cut along $[\frac 1 4, \infty)$ and is analytic on the principal branch.
\end{lemma}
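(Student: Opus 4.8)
The plan is to exploit that $G_2(y,\eta,\lambda)$ depends on $\lambda$ only through $\alpha=\alpha(\lambda)$, which by (\ref{eqn:alphaphi}) is analytic on the cut plane $\Cpx\setminus[\frac 1 4,\infty)$ and maps it into the right half plane $\Re(\alpha)\geq 0$. It therefore suffices to rewrite the formula (\ref{eqn:G2w1w2}) so that every surviving factor is manifestly analytic in $\alpha$ on $\Re(\alpha)\geq 0$; this will simultaneously place all singularities of $G_2$ on the cut $[\frac 1 4,\infty)$ and give analyticity on the principal branch.

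First I would locate the apparent singularities in (\ref{eqn:G2w1w2}). The factor $w_1(\lambda,\cdot)$ is harmless, since its hypergeometric parameter $c=1+\alpha$ has $\Re(c)\geq 1$ and is never a non-positive integer, so $w_1$ is analytic on the principal branch. The danger is in the other two pieces: in $\frac{w_2\,w_1(\eta)}{2\alpha}$ the factor $w_2$ has a pole wherever $c=1-\alpha$ is a non-positive integer, i.e. at $\alpha=1,2,3,\dots$, while in $\frac{w_1\,w_1(\eta)}{2\alpha a}$ the factor $1/a$ has a pole at the same points because $a(\lambda)$ carries $\Gamma(1-\alpha)$ in its denominator (and $a$ also has poles at $\alpha=\frac 3 2,\frac 5 2,\dots$ coming from $\Gamma(\frac 3 2-\alpha)$). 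Thus each summand of $G_2$ has poles at $\alpha\in\{1,2,3,\dots\}$, and the claim forces them to cancel.

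The cancellation is exactly what Lemma \ref{lem:connectionformula} provides. Writing $F_1,F_2$ for the hypergeometric factors of $w_1,w_2$ and substituting them back in terms of $w_1,w_2$, the connection formula with $z=\alpha$ becomes, after collecting the exponentials,
\[
\frac{w_2}{D_1}-\frac{w_1}{D_2}=\frac{R(\alpha,y)}{e^{-y}\sqrt{e^{2y}-1}},
\]
where $D_1=\Gamma(1-\alpha)\Gamma(\frac 3 2+\alpha)$, $D_2=\Gamma(1+\alpha)\Gamma(\frac 3 2-\alpha)$, and $R(\alpha,y)$ is the entire-in-$\alpha$ right-hand side of Lemma \ref{lem:connectionformula}. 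Since $a=-D_2/D_1$, the combination $w_a=w_1+aw_2=-D_2\bigl(\frac{w_2}{D_1}-\frac{w_1}{D_2}\bigr)$ collapses to $w_a=-D_2R/(e^{-y}\sqrt{e^{2y}-1})$, whence
\[
G_2=\frac{w_a(\lambda,y)\,w_1(\lambda,\eta)}{2\alpha a}=\frac{D_1\,R(\alpha,y)\,w_1(\lambda,\eta)}{2\alpha\,e^{-y}\sqrt{e^{2y}-1}}.
\]
The whole factor $a$ — with both its zeros at $\alpha=1,2,\dots$ and its poles at $\alpha=\frac 3 2,\frac 5 2,\dots$ — has disappeared, which is the crux of the matter.

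It then remains to read off analyticity from this closed form. Using the reflection identity $\Gamma(1-\alpha)\sin(\pi\alpha)=\pi/\Gamma(\alpha)$ inside $D_1R$ together with $\alpha\Gamma(\alpha)=\Gamma(\alpha+1)$, the scalar prefactor becomes $\frac{\pi\,\Gamma(\frac 3 2+\alpha)}{2\,\Gamma(\alpha+1)}$ times $(\cosh y)^{-\alpha}$, $\hF{\frac\alpha 2+\frac 5 4,\frac\alpha 2+\frac 3 4,2;\tanh^2 y}$, and elementary functions of $y$. On $\Re(\alpha)\geq 0$ the ratio $\Gamma(\frac 3 2+\alpha)/\Gamma(\alpha+1)$ is analytic (neither Gamma has a pole there and Gamma has no zeros; at $\alpha=0$ it equals $\Gamma(\frac 3 2)$), the exponential $(\cosh y)^{-\alpha}$ is entire, the hypergeometric factor has $c=2$ and is entire in $\alpha$, and $w_1(\lambda,\eta)$ is again analytic on the principal branch. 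Hence $G_2$ is a product of functions analytic in $\alpha$ on $\Re(\alpha)\geq 0$ composed with the analytic $\alpha(\lambda)$, so it is analytic on $\Cpx\setminus[\frac 1 4,\infty)$. Finally, since $G_2$ genuinely depends on $\alpha$ (e.g. through $(\cosh y)^{-\alpha}$) and not merely on $\alpha^2=\frac 1 4-\lambda$, the two determinations $\pm\alpha$ across $[\frac 1 4,\infty)$ yield different boundary values, so the branch cut along $[\frac 1 4,\infty)$ is genuine. I expect the main obstacle to be the bookkeeping of the $\Gamma$-factors that makes $a$ cancel and leaves a pole-free Gamma-ratio on the right half plane; Lemma \ref{lem:connectionformula} is precisely the tool that renders this transparent.
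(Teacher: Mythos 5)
Your proposal is correct and follows essentially the same route as the paper: both apply the connection formula of Lemma \ref{lem:connectionformula} to collapse $w_a = w_1 + a(\lambda)w_2$ into a single product in which the factor $a(\lambda)$ disappears, reduce the prefactor via the reflection formula to $\pi\Gamma(\tfrac 32+\alpha)/\bigl(2\Gamma(1+\alpha)\bigr)$ times entire functions, and observe that the only candidate poles $\alpha = -(n+\tfrac 32)$ cannot occur since $\Re(\alpha)\geq 0$ on the principal branch. Your bookkeeping of which summand carries which apparent pole at $\alpha=1,2,\dots$, and the closing remark on why the cut is genuine, are slightly more explicit than the paper's but amount to the same argument.
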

\begin{proof}
Since $G_2$ is the Green function of the differential operator $L_2$, it is analytic for non-real $\lambda\in \Cpx\backslash \Real$. We discuss the singularities of $G_2$ on the real axis. 

From the formulas of $w_1, w_2$ and $a(\lambda)$ given in (4.3)-(4.5), it has branch cut along $\lambda \in [\frac 1 4, \infty)$. By Lemma \ref{lem:connectionformula}, $G_2$ can be written as
\begin{eqnarray*}
G_2( y, \eta, \lambda) &=& \frac{e^{(1-\alpha)\eta}}{2\alpha \sqrt{(e^{2 y}-1)(e^{2\eta}-1)}}\left\{\frac{1}
{a(\lambda)} \hF{-\half, -\half + \alpha, 1+\alpha; e^{-2 y}} e^{(1-\alpha) y} \right. \\
& & \left. + e^{(1+\alpha) y}\hF{-\half, -\half-\alpha, 1-\alpha; e^{-2 y}}\right\}\hF{-\half, -\half+\alpha, 1+ \alpha; e^{-2\eta}} \\
& = & \frac{e^{-\alpha \eta}}{2\alpha\sqrt{2\pi}\sqrt{1-e^{-2\eta}}} \left(\cosh  y\right)^{-\alpha} \tanh^{\frac 3 2}( y) \hF{\half \alpha + \frac 5 4, \half \alpha + \frac 3 4,2; \tanh^2 y} \times\\
& & \sin(\pi \alpha)\Gamma(1-\alpha)\Gamma(3/2+\alpha) \hF{-\half, -\half+\alpha, 1+\alpha; e^{-2\eta}}. 
\end{eqnarray*}
Since $\sin (\pi \alpha) =0$ when $\alpha$ is an integer and the Gamma and hypergeometric functions have only simple poles, the possible poles of $G_2$ are given by $3/2 + \alpha = -n$ for $n=0, 1,2,\ldots$, i.e., $\alpha = -(n + 3/2)$. Note that $i \alpha = \sqrt r e^{i\frac \theta 2}$ with $\frac{\theta}{2} \in [0, \pi)$, this case does not exist and so $G_2$ is analytic in $\lambda$.
\end{proof}

Note that the Green function of $\calL= \Delta - 1$ is the same as the one of $L = \pd_x^2 +\pd_y^2 - P(y)$. From Theorem \ref{thm:TitchmarshGreen}, for any real constant $c\in (0, \frac 1 4)$, the Green function of the operator $L = \pd_x^2 + \pd_y^2 - P(y) $ is the given by
\begin{equation}\label{eqn:Greenconvolution}
G(x, y, \xi, \eta) = \frac{1}{2\pi i}\int_{c- i \infty}^{c + i \infty} G_1(x, \xi, -\lambda) G_2(y, \eta, \lambda) d\lambda.
\end{equation}

\begin{thm}\label{thm:Greenintegral}
The Green function $G(x, y, \xi, \eta)$ of the operator $\calL = \Delta -1$ and $L= \pd_x^2 + \pd_y^2 - P(y)$ has the following integral form: 

\begin{equation}\label{eqn:GreenRe}
G(x, y, \xi, \eta) = \frac{1}{2\pi}\int_0^\infty \Re\left\{a(s) w(s, y) w(s, \eta) + \bar w(s, y) w(s, \eta)\right\}\frac{e^{-\abs{x-\xi}\sqrt{s^2+\frac 1 4}}}{\sqrt{s^2 + \frac 1 4}}ds
\end{equation}
or 
\begin{equation}\label{eqn:GreenR}
G(x, y, \xi, \eta) = \frac{1}{\pi}\int_{-\infty}^\infty \left\{a(s) w(s, y) w(s, \eta) + \bar w(s, y) w(s, \eta)\right\}\frac{e^{-\abs{x-\xi}\sqrt{s^2+\frac 1 4}}}{\sqrt{s^2 + \frac 1 4}}ds
\end{equation}
for $y, \eta \in (0, \infty)$ and $x, \xi \in \Real$ with $(x, y)\ne (\xi, \eta)$, and the integral in (\ref{eqn:GreenR}) takes the principal value. Here, for $s \geq 0$,  
\begin{equation}\label{eqn:as}
a(s)=: a(1/4 +s^2) = - \frac{\Gamma(1- is)\Gamma(3/2+ is)}{\Gamma(1+ i s)\Gamma(3/2 - i s)}
\end{equation}
and
\begin{equation}\label{eqn:wsy}
w(s,y) = \frac{e^{(1-is) y}}{\sqrt{e^{2 y}-1}}\hF{-\half, -\half + is, 1+i s; e^{-2 y}}
\end{equation}
for $y \in (0, \infty)$. 
\end{thm}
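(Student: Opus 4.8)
The plan is to start from the Titchmarsh contour representation (\ref{eqn:Greenconvolution}), in which the Green function is the vertical-line integral $\frac{1}{2\pi i}\int_{c-i\infty}^{c+i\infty} G_1(x,\xi,-\lambda)G_2(y,\eta,\lambda)\,d\lambda$ with $0<c<\frac14$, and to deform this contour to the right so that it collapses onto the branch cut of $G_2$ along $[\frac14,\infty)$. Recall that $G_1(x,\xi,-\lambda)$ is analytic for $\Re\lambda>0$ (its cut lies on $(-\infty,0]$), while the preceding lemma shows that $G_2(y,\eta,\lambda)$ is analytic on the principal branch with a single cut along $[\frac14,\infty)$ and no poles. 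Hence the integrand is holomorphic in $\set{\Re\lambda>c}\setminus[\frac14,\infty)$, and by Cauchy's theorem the vertical line may be pushed into a contour hugging the two sides of this cut. Parametrizing the cut by $\lambda=\frac14+s^2$, $s\in(0,\infty)$, so that $d\lambda=2s\,ds$ and $\sqrt\lambda=\sqrt{s^2+\frac14}$, the deformation turns the line integral into $\frac{1}{2\pi i}\int_{1/4}^\infty G_1(x,\xi,-\lambda)\,[G_2^+-G_2^-]\,d\lambda$, where $G_2^\pm$ are the boundary values of $G_2$ from the upper and lower sides of the cut (and $G_1$ is continuous across $[\frac14,\infty)$).

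The next step is to compute the jump $G_2^+-G_2^-$ explicitly. On the two sides of the cut the function $\alpha(\lambda)=\sqrt{\frac14-\lambda}$ takes the boundary values $\alpha^+=-is$ and $\alpha^-=is$, as read off from the two branch conventions (\ref{eqn:alphaphi})--(\ref{eqn:alphatheta}). Substituting $\alpha=\mp is$ into the definitions of $w_1,w_2$ and $a(\lambda)$, and using that the hypergeometric parameters are interchanged under complex conjugation, I would identify $w_1^+(y)=\bar w(s,y)$, $w_2^+(y)=w(s,y)$, $a^+=a(s)$ on the upper side, and $w_1^-(y)=w(s,y)$, $w_2^-(y)=\bar w(s,y)$, $a^-=\overline{a(s)}$ on the lower side, where $w(s,y)$ and $a(s)$ are exactly (\ref{eqn:wsy}) and (\ref{eqn:as}). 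A short computation via the reflection property of the Gamma function gives $\abs{a(s)}=1$, hence $1/a(s)=\overline{a(s)}$. Feeding these boundary values into $G_2=\frac{w_1(y)w_1(\eta)}{2\alpha a}+\frac{w_2(y)w_1(\eta)}{2\alpha}$ and subtracting, the four resulting terms pair into conjugate pairs, and $\frac{-1}{is}=\frac{i}{s}$ collapses them to
\[
G_2^+-G_2^-=\frac{i}{s}\,\Re\!\set{a(s)\,w(s,y)\,w(s,\eta)+\bar w(s,y)\,w(s,\eta)}.
\]
Inserting this together with $G_1=e^{-\abs{x-\xi}\sqrt{s^2+1/4}}/(2\sqrt{s^2+1/4})$ and $d\lambda=2s\,ds$, the factors of $i$, $2$ and $s$ cancel and produce precisely (\ref{eqn:GreenRe}). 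Formula (\ref{eqn:GreenR}) then follows by extending the integral to the whole real line: the parity relations $a(-s)=\overline{a(s)}$ and $w(-s,\cdot)=\bar w(s,\cdot)$ show that the bracketed integrand at $-s$ is the conjugate of its value at $s$, so its imaginary part is odd and folding $(-\infty,\infty)$ onto $[0,\infty)$ reproduces the real part.

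The main obstacle will be the analytic justification of the contour deformation, i.e., showing that the contribution of the large right-hand arc $\abs{\lambda}=R$, $\Re\lambda\ge c$, tends to zero as $R\to\infty$ and that the resulting branch-cut integral converges. When $x\ne\xi$ the factor $e^{-\abs{x-\xi}\sqrt\lambda}$ decays like $e^{-\abs{x-\xi}\sqrt R\cos(\arg\lambda/2)}$ on the arc, which suppresses it provided $G_2(y,\eta,\lambda)$ grows at most polynomially; the latter requires asymptotic control of the hypergeometric factors $w_1,w_2$ and of $a(\lambda)$ as $\abs\lambda\to\infty$ off the real axis, which is the genuinely technical input (it is the resolvent-decay side of the spectral analysis of $A=-D_x^2+P(x)$ carried out in the appendix). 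The borderline diagonal case $x=\xi$ is the hardest: there $G_1$ gives no exponential gain, the line integral and its branch-cut image are only conditionally convergent, and one must read (\ref{eqn:GreenR}) as a principal value and exploit the oscillation of $w(s,y)w(s,\eta)$ in $s$ (the cancellation inside $\Re\set{\cdots}$) to secure convergence. One should also verify integrability at the branch point $\lambda=\frac14$, but this is mild, since the apparent $1/s$ in the jump is exactly cancelled by the Jacobian $2s\,ds$.
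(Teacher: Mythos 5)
Your proposal follows essentially the same route as the paper: starting from the Titchmarsh formula (\ref{eqn:Greenconvolution}), deforming the vertical contour onto the branch cut $[\tfrac14,\infty)$ of $G_2$ (the paper does this via a closed contour with arcs $C_1,C_2$, a small circle at $\tfrac14$, and the two sides $\Gamma_1,\Gamma_2$ of the cut), computing the boundary values with $\alpha=\mp is$ and $|a(s)|=1$, and using the parity in $s$ to pass between (\ref{eqn:GreenRe}) and (\ref{eqn:GreenR}). Your jump computation $G_2^+-G_2^-=\frac{i}{s}\Re\{a(s)w(s,y)w(s,\eta)+\bar w(s,y)w(s,\eta)\}$ and the identification of the technical obstacles (arc decay, the branch point, and the $x=\xi$ case, which the paper handles using $\eta-y>0$ and the symmetry of the kernel in $y,\eta$) match the paper's argument.
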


We need a lemma that will be used in the proof of Theorem \ref{thm:Greenintegral}.

\begin{lemma}\label{lem:asymptoticGammaF}
We have the following asymptotic expansions:  
\begin{eqnarray*}
\frac{\Gamma(1+z)\Gamma(3/2 - z)}{\Gamma(1-z)\Gamma(3/2+ z)} & \sim &  \frac{1-4z^2}{4z^2} \left(1 - \frac{1}{8}z^{-1} + O(z^{-2})\right)^2 \tan(\pi z) \quad (\abs{\arg z} < \pi), \\
\hF{-\half, -\half + z, 1+z; e^{-2y}} & \sim & \left(1- e^{-2y}\right)^{\half}\left(1- \frac{3}{4(1+z)}\frac{1}{1-e^{2y}} + O(z^{-2})\right)
\end{eqnarray*}
when $\abs{z} \To \infty$. Here $\arg z \in [-\pi,\pi).$
\end{lemma}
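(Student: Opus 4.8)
The plan is to handle the two expansions by separate elementary devices: the reflection formula followed by the Stirling-type asymptotics of a ratio of Gamma functions for the first, and the termwise large-parameter limit of the hypergeometric series for the second.

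For the first expansion the obstacle is that $\Gamma(1-z)$ and $\Gamma(3/2-z)$ have arguments in the left half-plane when $\Re z$ is large, so Stirling's expansion cannot be applied to them directly. I would remove this with the reflection formula $\Gamma(w)\Gamma(1-w)=\pi/\sin(\pi w)$: writing $\Gamma(1-z)=\pi/(\sin(\pi z)\Gamma(z))$ and, using $\sin(\pi(3/2-z))=-\cos(\pi z)$, $\Gamma(3/2-z)=-\pi/(\cos(\pi z)\Gamma(z-\half))$, a short simplification collapses the quotient to
\[
\frac{\Gamma(1+z)\Gamma(3/2-z)}{\Gamma(1-z)\Gamma(3/2+z)} = -\tan(\pi z)\,\frac{\Gamma(z)\,\Gamma(z+1)}{\Gamma(z-\half)\,\Gamma(z+\frac{3}{2})}.
\]
This isolates the factor $\tan(\pi z)$, which carries all the poles, and leaves a ratio of four Gamma functions whose arguments all lie in $\abs{\arg z}<\pi$, exactly the domain where the Stirling expansion is valid.

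To finish the first expansion I would simplify the Gamma ratio using $\Gamma(z+1)=z\Gamma(z)$, $\Gamma(z+\frac{3}{2})=(z+\half)\Gamma(z+\half)$ and $\Gamma(z-\half)=\Gamma(z+\half)/(z-\half)$, reducing it to $\frac{z(z-\half)}{z+\half}\bigl(\Gamma(z)/\Gamma(z+\half)\bigr)^2$. I would then expand the remaining power of $\Gamma(z)/\Gamma(z+\half)$ by the ratio formula $\Gamma(z+a)/\Gamma(z+b)\sim z^{a-b}\bigl(1+\frac{(a-b)(a+b-1)}{2z}+O(z^{-2})\bigr)$ \cite[5.11.13]{DLMF}, and collect terms: combining the elementary rational prefactor with the $z^{-1/2}$ scaling coming from $\Gamma(z)/\Gamma(z+\half)$ produces the stated $\frac{1-4z^2}{4z^2}$ together with the squared $O(z^{-1})$ correction factor multiplying $\tan(\pi z)$.

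For the second expansion I would work directly from the series $\hF{-\half,-\half+z,1+z;w}=\sum_{n\ge0}\frac{(-\half)_n}{n!}\,\frac{(-\half+z)_n}{(1+z)_n}\,w^n$ with $w=e^{-2y}\in(0,1)$ fixed. The Pochhammer quotient is $\frac{(-\half+z)_n}{(1+z)_n}=\prod_{k=0}^{n-1}\frac{z+k-\half}{z+k+1}=1-\frac{3n}{2(1+z)}+O(z^{-2})$, so the leading term sums to the binomial generating function $\sum_n\frac{(-\half)_n}{n!}w^n=(1-w)^{\half}$, and the first correction is governed by $\sum_n n\frac{(-\half)_n}{n!}w^n=w\frac{d}{dw}(1-w)^{\half}=-\frac{w}{2}(1-w)^{-\half}$. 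Assembling these and rewriting $\frac{w}{1-w}=-\frac{1}{1-e^{2y}}$ gives exactly $(1-e^{-2y})^{\half}\bigl(1-\frac{3}{4(1+z)}\frac{1}{1-e^{2y}}+O(z^{-2})\bigr)$.

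The main obstacle I anticipate is analytic rather than algebraic: in the second expansion the interchange of the $z\To\infty$ limit with the infinite sum must be justified uniformly in $\abs{\arg z}<\pi$. I would handle this by bounding the Pochhammer quotient and its first-order remainder uniformly in $n$ once $\abs{z}$ is large, so that the series is dominated by a convergent series in $\abs{w}<1$ and the $O(z^{-2})$ error is controlled termwise. A secondary, purely bookkeeping difficulty is keeping the half-integer shifts straight in the first expansion, where a mistake in any single correction coefficient would corrupt the $O(z^{-1})$ term.
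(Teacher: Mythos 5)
Your argument is correct and lands on the right asymptotics, but it splits from the paper halfway through. For the Gamma-ratio expansion your route is essentially the paper's: both use the reflection formula to move the left-half-plane Gammas to the right half-plane, extract $\tan(\pi z)$, and finish with the standard expansion of $\Gamma(z+a)/\Gamma(z+b)$; the only difference is grouping (the paper pairs $\Gamma(3/2+z)\Gamma(3/2-z)$ to produce $1-4z^2$ at once, you reduce to $\Gamma(z)/\Gamma(z+\half)$ and recover the same prefactor afterwards). For the hypergeometric expansion the paper instead applies the Pfaff transformation $\hF{a,b,c;w}=(1-w)^{-a}\hF{a,c-b,c;\frac{w}{w-1}}$ to move the large parameter entirely into the third slot and then quotes the large-$c$ expansion of \cite[Section 15.12]{DLMF}, which disposes of the uniformity issue by citation; your direct termwise expansion of the series with resummation via $\sum_n n\frac{(-\half)_n}{n!}w^n=w\frac{d}{dw}(1-w)^{\half}$ is more elementary and self-contained, at the cost of having to justify the interchange of limit and sum yourself --- which you correctly identify as the real work, and which your dominated-convergence plan handles in any sector $\abs{\arg(1+z)}\le\pi-\delta$ (note the expansion cannot hold uniformly all the way to $\arg z=\pm\pi$, where $1+z$ meets the poles of the hypergeometric function in $z$; this caveat is implicit in the DLMF reference the paper uses and is harmless for the application, where $\Re z\ge 0$). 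One bookkeeping point: when you ``collect terms'' in the first expansion, your own computation yields $\left(1-\frac{3}{8}z^{-1}+O(z^{-2})\right)^2$, in agreement with the paper's proof body (which uses $(a-b)(a+b-1)/2=-3/8$ for $a=1$, $b=3/2$); the coefficient $\frac{1}{8}$ printed in the lemma statement appears to be a typo, so do not force your algebra to match it --- only the leading order is used downstream.
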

\begin{proof}
We show the first formula which involves gamma functions. Since 
\[
\Gamma(1+z) = z \Gamma(z) 
\]
and 
\[
\Gamma(z)\Gamma(-z) = - \frac{\pi}{z \sin(\pi z)}, 
\]
we have
\begin{eqnarray*}
\Gamma(1-z) & = & - z \Gamma(-z) = \frac{\pi}{\sin(\pi z) \Gamma(z)} \\
& = & \frac{\pi z}{\sin(\pi z)}\frac{1}{\Gamma(1+z)}.
\end{eqnarray*}
Since
\begin{eqnarray*}
\Gamma\left(\frac 3 2 + z\right) \Gamma\left(\frac 3 2 - z\right) & = & \Gamma(3/2)^2\frac{1}{\cos(\pi z)}\left(1-4z^2\right) \\
& = & \frac{\pi(1-4z^2)}{4 \cos(\pi z)}
\end{eqnarray*}
we have
\[
\Gamma(3/2-z) = \frac{\pi (1-4z^2)}{4\cos(\pi z)}\frac{1}{\Gamma(3/2+z)}.
\]
It follows that when $\abs{z} \To \infty$ we have
\begin{eqnarray*}
\frac{\Gamma(1+z)\Gamma(3/2- z)}{\Gamma(1-z)\Gamma(3/2+ z)} & = & \frac{\Gamma(1+z)^2}{\Gamma(3/2+ z)^2}\frac{\sin(\pi z)}{\pi z} \frac{\pi (1-4z^2)}{4\cos(\pi z)} \\
& = & \frac{(1-4z^2)}{4z}\frac{\sin(\pi z)}{\cos(\pi z)} \left(\frac{\Gamma(1+z)}{\Gamma(3/2+z)}\right)^2 \\
& \sim & \frac{(1-4z^2)}{4z^2}\frac{\sin(\pi z)}{\cos(\pi z)} \left(1- \frac 3 8 z^{-1} + O(z^{-2})\right)^2.
\end{eqnarray*}
In the last step we have used the following expansion for large $\abs{z}$ with $\abs{\arg z}< \pi$:
\[
\frac{\Gamma(z+\alpha)}{\Gamma(z+\beta)} \sim z^{\alpha - \beta}\left(1+ \frac{(\alpha -\beta)(\alpha+\beta -1)}{2} z^{-1} + O(z^{-2})\right).
\]

Next we consider the hypergeometric function. From the transformation formulas we have 
\begin{eqnarray*}
\hF{-\half,-\half + z, 1+z; e^{-2 y}} &= & \left(1- e^{-2 y}\right)^{\half}\hF{-\half, \frac 3 2, 1+z; \frac{e^{-2 y}}{e^{-2 y}-1}} \\
& = & \left(1- e^{-2 y}\right)^{\half}\hF{-\half, \frac 3 2, 1+z; \frac{1}{1-e^{2 y}}}.
\end{eqnarray*}
Write $z = u +i v$ with $u, v \in \Real$. Then we have 
\[
\frac{1}{1-e^{2 y}} < 0 < \half \quad (y>0)
\]
and 
\[
\abs{1+z+n} =  \sqrt{(n+1+u)^2 + v^2}\geq 1
\]
for all $n=0, 1, 2, \ldots$. So from \cite[Section 15.12]{DLMF} we have the following asymptotic expansion
\[
\hF{-\half, \frac 3 2, 1+z; \frac{1}{1-e^{2 y}}} \sim \sum_{s=0}^{m-1}\frac{(-\half)_s (\frac 3 2)_s}{(1+z)_s s!} \frac{1}{(1-e^{2 y})^s} + O(z^{-m})
\]
for any fixed $m = 1, 2, \ldots$. Letting $m=2$ in the formula above gives us the desired expansion.
\end{proof}

\begin{proof}[Proof of Theorem \ref{thm:Greenintegral}]
First we show that formulas (\ref{eqn:GreenRe}) and (\ref{eqn:GreenR}) are equivalent. Note that $\bar a(s) = a(-s)$ and $\bar w(s, y) = w(-s, y)$. It follows that the real part of 
\[
a(s) w(s,y)w(s,\eta) + \bar w(s,y) w(s,\eta)
\] 
is an even function in $s$ and its imaginary part is odd in $s$. So the two integral formulas are equal. 

In the following we show formula (\ref{eqn:GreenRe}). We first assume that $y \leq \eta$. 
Consider the simple closed contour $\Omega$ in Figure \ref{fig:contourG}.
\begin{center}
\begin{figure}[!htp]
\includegraphics[scale=0.6]{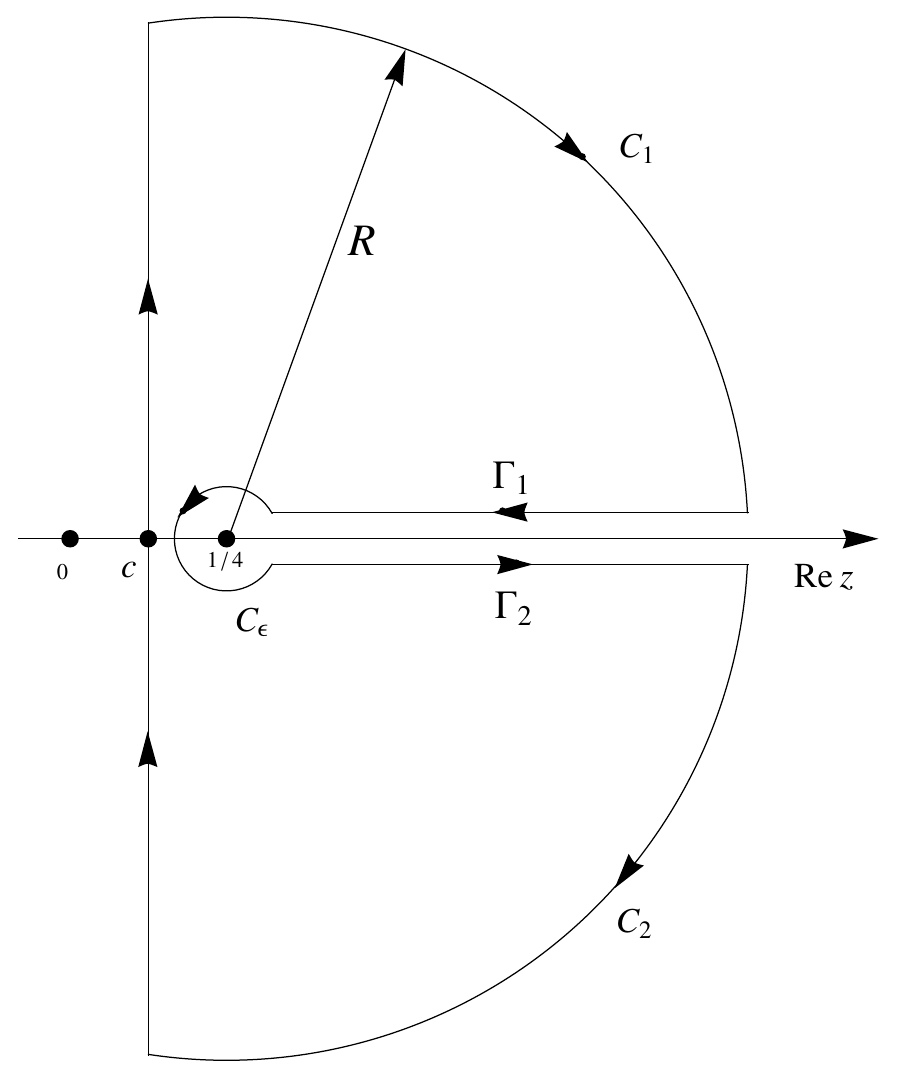}
\caption{Contour $\Omega$ used for the Green function $G(x, y, \xi, \eta)$}\label{fig:contourG}
\end{figure}
\end{center}
It consists of the circular arcs $C_1$ and $C_2$ with radius $R$ centered at $\frac 1 4$, the vertical line from $c - i \infty$ to $c + i \infty$ cut by the arcs $C_1$ and $C_2$, the small circle $C_\eps$ with radius $\epsilon$ centered at $\frac 1 4$, and the horizontal line segments $\Gamma_1$, $\Gamma_2$. Since the Green functions $G_1$ and $G_2$ have no singularities in the region bounded by $\Omega$, the Cauchy Theorem implies that  
\begin{equation}\label{eqn:Cauchyidentity}
\oint_\Omega  G_1(x,\xi,-\lambda) G_2(y, \eta, \lambda) d\lambda = 0.
\end{equation}

\begin{claim}
We have
\begin{equation*}
\lim_{\eps\To 0}\int_{C_\eps} G_1(x,\xi,-\lambda) G_2(y, \eta, \lambda) d\lambda = 0.
\end{equation*}
\end{claim}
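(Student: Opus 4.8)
The plan is to prove the claim by a direct estimation of the contour integral, exploiting the fact that the arc length of $C_\eps$ is $O(\eps)$ while the integrand blows up no faster than $1/\abs{\alpha(\lambda)}=1/\sqrt\eps$ near the branch point $\lambda=\frac14$. First I would parametrize $C_\eps$ by $\lambda = \frac14 + \eps e^{i\theta}$, so that $\frac14-\lambda = -\eps e^{i\theta}$ and hence, by the definition \eqref{eqn:alphaphi} of $\alpha$, one has $\abs{\alpha(\lambda)} = \abs{\frac14-\lambda}^{\half} = \sqrt\eps$ at every point of $C_\eps$. Since $C_\eps$ has length at most $2\pi\eps$, the whole estimate reduces to bounding $\abs{G_1}\,\abs{G_2}$ on $C_\eps$ and invoking the standard $ML$-estimate.

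The bound on $G_1$ is immediate: from $G_1(x,\xi,-\lambda) = e^{-\abs{x-\xi}\sqrt\lambda}/(2\sqrt\lambda)$ and the fact that $\sqrt\lambda\To\half$ as $\lambda\To\frac14$ (the branch cut of $G_1$ lies along $(-\infty,0]$, away from $\frac14$), the factor $G_1$ is continuous, hence bounded by a constant $C_1$ on a neighborhood of $\frac14$ for the fixed values $x,\xi$. For $G_2$ I would use the formula \eqref{eqn:G2w1w2} and write $G_2 = \frac{w_1(\lambda,y)w_1(\lambda,\eta)}{2\alpha a(\lambda)} + \frac{w_2(\lambda,y)w_1(\lambda,\eta)}{2\alpha}$. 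As $\alpha\To 0$ the exponential prefactors $e^{(1\mp\alpha)y}$ and the hypergeometric factors $\hF{-\half,-\half\pm\alpha,1\pm\alpha; e^{-2y}}$ are continuous in $\alpha$ at $\alpha=0$ for each fixed $y,\eta\in(0,\infty)$ (the argument $e^{-2y}\in(0,1)$ is fixed and the parameters stay bounded), so $w_1(\lambda,y)$, $w_1(\lambda,\eta)$, $w_2(\lambda,y)$ all remain bounded; moreover $a(\lambda)\To -1\neq 0$ by continuity of the Gamma factors in its definition. Consequently each term of $G_2$ is $O(1/\abs\alpha)$, giving $\abs{G_2}\le C_2/\sqrt\eps$ on $C_\eps$ for a constant $C_2$ independent of $\eps$.

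Combining these with the $ML$-inequality yields
\[
\abs{\int_{C_\eps} G_1(x,\xi,-\lambda)\,G_2(y,\eta,\lambda)\,d\lambda} \le (2\pi\eps)\cdot C_1\cdot \frac{C_2}{\sqrt\eps} = 2\pi C_1 C_2\,\sqrt\eps \To 0
\]
as $\eps\To 0$, which is exactly the claim. The only point requiring genuine care is the competition between the arc length $O(\eps)$ and the $1/\alpha$ growth of the integrand at the branch point; the naive bound $\abs{G_2}=O(1/\sqrt\eps)$ already wins because $\alpha=\sqrt{\frac14-\lambda}$ is merely a square-root singularity, so there is no need to exploit the cancellation $w_1(\lambda,y)+a(\lambda)w_2(\lambda,y)=O(\alpha)$ that in fact makes $G_2$ bounded at $\frac14$. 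Accordingly, I expect the main obstacle to be the verification of the uniform boundedness of the Gamma and hypergeometric factors as $\lambda\To\frac14$, which is precisely where the continuity properties of the special functions are used.
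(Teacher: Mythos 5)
Your proof is correct and follows essentially the same route as the paper: bound $\abs{G_1}$ by a constant near $\lambda=\tfrac14$, bound $\abs{G_2}$ by $C/\abs{\alpha}=C/\sqrt{\eps}$ using the boundedness of $w_1,w_2$ and the non-vanishing of $a(\lambda)$ as $\alpha\To 0$, and finish with the $ML$-estimate against the arc length $O(\eps)$. (Minor note: the limit of $a(\lambda)$ is indeed $-1$ as you compute, rather than $1$ as written in the paper's proof, but this has no effect on either argument since only $a(\lambda)\not\To 0$ is needed.)
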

On the circle $C_\eps$ we have 
\[
\lambda = \frac 1 4 + \eps e^{i\theta} = r e^{i\phi}
\] 
with $\theta \in [0, 2\pi)$ and $\phi \in [-\pi, \pi)$. As $\eps \To 0$ we have $r\approx \frac 1 4$ and $\phi \approx 0$. It follows that 
\begin{eqnarray*}
G_1(x,\xi, -\lambda) = \frac{e^{-\abs{x-\xi}\sqrt{\lambda}}}{2\sqrt{\lambda}} = \frac{1}{2\sqrt{r}e^{i\frac{\phi}{2}}}\exp\left(- \abs{x-\xi}\sqrt{r}\left(\cos \frac{\phi}{2} + i \sin\frac{\phi}{2}\right)\right)
\end{eqnarray*}
and then 
\begin{eqnarray*}
\abs{G_1(x,\xi, -\lambda)} = \frac{1}{2\sqrt r}\exp\left(-\abs{x-\xi}\sqrt{r}\cos\frac{\phi}{2}\right).
\end{eqnarray*}
So we have $\abs{G_1(x,\xi,-\lambda)} \leq c_1$ for some constant $c_1$ as $\eps \To 0$. For the Green function $G_2$, we have $\alpha = - i \sqrt{\eps} e^{i\frac{\theta}{2}}$ that converges to $0$ as $\eps\To 0$. It follows that when $\eps \To 0$ we have the following limits
\begin{eqnarray*}
a(\lambda) & \To & 1 \\
w_1(\lambda, y) & \To & \frac{e^{y}}{\sqrt{e^{2y}-1}}\hF{-\half, -\half, 1; e^{-2 y}} \\
w_2(\lambda, y) & \To & \frac{e^{y}}{\sqrt{e^{2y}-1}}\hF{-\half, -\half, 1; e^{-2 y}}
\end{eqnarray*}
and then
\[
\abs{G_2(y, \eta, \lambda)} \leq \frac{c_2}{\abs{\alpha}} = \frac{c_2}{\sqrt{\eps}}
\]
where the constant $c_2$ does not depend on $\eps$. So we have
\[
\abs{G_1(x,\xi, -\lambda)G_2(y,\eta, \lambda ) d\lambda} \leq \frac{c_1 c_2}{\sqrt{\eps}} \eps d\theta = c_1 c_2 \sqrt{\eps} d\theta
\]
and the integral along $C_\eps$ converges to zero as $\eps \To 0$. This finishes the proof of the claim on $C_\eps$.

\begin{claim}
We have
\begin{equation*}
\lim_{R \To \infty}\int_{C_1 \cup C_2} G_1(x, \xi, -\lambda) G_2(y, \eta, \lambda) d\lambda = 0.
\end{equation*}
\end{claim}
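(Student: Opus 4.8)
The plan is to bound the integrand $G_1(x,\xi,-\lambda)G_2(y,\eta,\lambda)$ on the arcs by an explicit function of the polar angle and then show that, after integration against $\abs{d\lambda}=R\,d\theta$, the result tends to $0$ as $R\To\infty$. The two Green's functions are governed by the large-$\abs\lambda$ asymptotics of Lemma \ref{lem:asymptoticGammaF}. Throughout I parametrize the arcs by $\lambda=\frac14+Re^{i\theta}$, keeping track of the \emph{two} angle conventions in play: $\alpha=\alpha(\lambda)$ is read off from $\lambda-\frac14=Re^{i\theta}$ with $\theta\in[0,2\pi)$, whereas the $\sqrt\lambda$ entering $G_1$ uses the argument $\phi\in[-\pi,\pi)$ of $\lambda$ itself. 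Since the Bromwich line sits at $\Re\lambda=c<\frac14$, for large $R$ the arc $C_1$ covers $\theta\in(0,\pi/2)$ and $C_2$ covers $\theta\in(3\pi/2,2\pi)$; on both one computes $\Re\alpha=\sqrt R\sin(\theta/2)+o(\sqrt R)\ge0$ and $\Re\sqrt\lambda=\sqrt R\cos(\phi/2)+o(\sqrt R)>0$.

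First I would record the pointwise asymptotics. The hypergeometric estimate in Lemma \ref{lem:asymptoticGammaF} gives, for fixed $y>0$ and $\abs\alpha\To\infty$ with $\Re\alpha\ge0$, that $w_1(\lambda,y)\sim e^{-\alpha y}$ and $w_2(\lambda,y)\sim e^{\alpha y}$, so $\abs{w_1(\lambda,y)}\le Ce^{-\Re(\alpha)y}$ and $\abs{w_2(\lambda,y)}\le Ce^{\Re(\alpha)y}$ uniformly on the arcs. The Gamma estimate gives $a(\lambda)\sim\tan(\pi\alpha)$; since $\abs{\Im\alpha}=\sqrt R\abs{\cos(\theta/2)}\ge\sqrt R/\sqrt2$ on $C_1\cup C_2$, and $\tan(\pi\alpha)\To\pm i$ as $\Im\alpha\To\pm\infty$, the factor $\abs{a(\lambda)}\To1$ uniformly, hence is bounded above and below away from $0$. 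Using $y\le\eta$ and $\abs\alpha\sim\sqrt R$, the two terms of $G_2$ in (\ref{eqn:G2w1w2}) are bounded by
\[
\abs{G_2(y,\eta,\lambda)}\le\frac{C}{\sqrt R}\left(e^{-\Re(\alpha)(y+\eta)}+e^{-\Re(\alpha)(\eta-y)}\right)\le\frac{C}{\sqrt R},
\]
while simultaneously $\abs{G_1(x,\xi,-\lambda)}=\frac{1}{2\sqrt{\abs\lambda}}e^{-\abs{x-\xi}\Re\sqrt\lambda}\le\frac{C}{\sqrt R}e^{-\abs{x-\xi}\sqrt R\cos(\phi/2)}$.

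Multiplying these and integrating, the factors $\frac{1}{\sqrt R}\cdot\frac{1}{\sqrt R}\cdot R$ cancel and I am left with
\[
\left\lvert\int_{C_1\cup C_2}G_1G_2\,d\lambda\right\rvert\le C\int e^{-\sqrt R\,g(\theta)}\,d\theta,\qquad g(\theta)=\abs{x-\xi}\cos(\phi/2)+(\eta-y)\sin(\theta/2),
\]
where on each arc both $\cos(\phi/2)$ and $\sin(\theta/2)$ are nonnegative (on $C_2$ one uses $\cos(\phi/2)=-\cos(\theta/2)>0$). Because $(x,y)\ne(\xi,\eta)$ and $y\le\eta$, at least one of $\abs{x-\xi}$ and $\eta-y$ is strictly positive, so $g(\theta)>0$ at every interior point of the arcs, the only possible zero being an endpoint on the branch cut, a set of measure zero. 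Thus $e^{-\sqrt R\,g(\theta)}\To0$ pointwise a.e.\ and is dominated by $1$, and dominated convergence over the bounded $\theta$-ranges yields the claim.

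The delicate point, and where I expect the real work, is the neighborhood of the branch cut $\theta\To0^+$ on $C_1$ (and $\theta\To2\pi^-$ on $C_2$): there $\Re\alpha=\sqrt R\sin(\theta/2)\To0$, so $G_2$ loses its exponential decay, and the decay must come either from $G_1$, which retains $\Re\sqrt\lambda\sim\sqrt R$ and hence the factor $e^{-\abs{x-\xi}\sqrt R}$ when $x\ne\xi$, or---when $x=\xi$, which forces $y<\eta$---from genuinely integrating the surviving factor $e^{-\sqrt R\sin(\theta/2)(\eta-y)}\approx e^{-\sqrt R(\eta-y)\theta/2}$, whose integral is $O\big(1/(\sqrt R(\eta-y))\big)\To0$. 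This is precisely why one must integrate rather than bound the integrand uniformly. A secondary obstacle is verifying that the expansions of Lemma \ref{lem:asymptoticGammaF} hold \emph{uniformly} on the arcs: this is ensured because $\Re\alpha\ge0$ keeps $\alpha$ in the right half-plane (so $\abs{\arg\alpha}<\pi$ and $\abs{1+\alpha+n}\ge1$ for all $n$), while $\abs{\Im\alpha}\ge\sqrt R/\sqrt2$ keeps $\alpha$ uniformly away from the poles of $\tan(\pi\alpha)$, so that the control on $\abs{a(\lambda)}$ is genuinely uniform.
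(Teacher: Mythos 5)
Your argument is correct and follows essentially the same route as the paper's: the same parametrization of the arcs, the same asymptotics for $w_1$, $w_2$ and $a(\lambda)$ from Lemma \ref{lem:asymptoticGammaF}, the same bounds $\abs{G_2}\leq C R^{-1/2}\left(e^{-\Re(\alpha)(y+\eta)}+e^{-\Re(\alpha)(\eta-y)}\right)$ and $\abs{G_1}\leq CR^{-1/2}e^{-\abs{x-\xi}\sqrt{R}\cos(\phi/2)}$, and the same case split on whether $\abs{x-\xi}>0$. The only (harmless) deviation is in the degenerate case $x=\xi$ near the branch cut, where you invoke dominated convergence for $\int e^{-\sqrt{R}(\eta-y)\sin(\theta/2)}\,d\theta$ while the paper gets the same conclusion by an explicit substitution yielding an $O(R^{-1/2})$ bound; both are valid.
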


On the arcs $C_1$ and $C_2$ we have $\lambda = \frac 1 4 + R e^{i\theta}$ and $\theta \in [\theta_1, \theta_2]$. It follows that 
\[
\alpha = - i \sqrt R e^{i\frac{\theta}{2}}.
\]
In the following we consider the asymptotic expansions of various functions as $R \To \infty$. For the fundamental solutions $w_i$'s, from Lemma \ref{lem:asymptoticGammaF} we have
\begin{eqnarray*}
w_1(\lambda, y) & = & e^{(1-\alpha)y}(e^{2 y}-1)^{-\frac{1}{2}}\hF{-\half, - \half + \alpha, 1+ \alpha; e^{-2 y}} \\
& \sim & e^{-\alpha  y}\left(1 - \frac{3}{4(1+\alpha)}\frac{1}{1-e^{2 y}} + O(\alpha^{-2})\right), \\
w_2(\lambda,  y) & \sim & e^{\alpha  y}\left(1 - \frac{3}{4(1-\alpha)}\frac{1}{1-e^{2 y}} + O(\alpha^{-2})\right).
\end{eqnarray*}
So we have
\begin{eqnarray*}
2\abs{\alpha G_2( y, \eta, \lambda)} & = & \abs{\frac{w_1(\lambda,  y)w_1(\lambda, \eta)}{a(\lambda)} + w_2(\lambda,  y)w_1(\lambda, \eta)} \\
& \leq & \abs{\frac{w_1(\lambda, y)w_1(\lambda, \eta)}{a(\lambda)}} + \abs{w_2(\lambda,  y)w_1(\lambda, \eta)} \\
& \leq & c_1 \abs{e^{-\alpha( y + \eta)}}\abs{\frac{\sin(\pi \alpha)}{\cos(\pi \alpha)}} + c_2 \abs{e^{-\alpha(\eta -  y)}}
\end{eqnarray*}
for some constants $c_1, c_2 >0$. For the exponential functions we have
\begin{eqnarray*}
\abs{e^{-\alpha( y + \eta)}} & = & \abs{\exp\left(i ( y + \eta)\sqrt R\left(\cos \frac\theta 2 + i \sin \frac\theta 2\right)\right)} \\
& = & \exp\left(-( y + \eta)\sqrt R \sin \frac\theta 2\right)
\end{eqnarray*}
and
\begin{equation*}
\abs{e^{-\alpha(\eta - y)}} = \exp\left(-(\eta -  y)\sqrt R \sin \frac\theta 2\right).
\end{equation*}
We also have
\begin{eqnarray*}
\abs{\frac{\sin \pi \alpha}{\cos \pi \alpha}} = \abs{\frac{\sinh(i \pi \alpha)}{\cosh( i \pi \alpha)}} = \abs{\frac{\sinh(\pi \sqrt R e^{i\frac \theta 2})}{\cosh(\pi \sqrt R e^{i \frac\theta 2})}}. 
\end{eqnarray*}
The real part is given by
\[
\Re (\pi \sqrt R e^{i\frac \theta 2}) = \pi \sqrt R \cos \frac \theta 2.
\]
On the arc $C_1$ since $\theta_1 \approx 0$ and $\theta_2 \approx \frac{\pi}{2}$ we have $0 \lesssim \frac \theta 2 \lesssim \frac\pi 4$. On the arc $C_2$ we have $\frac{3\pi}{4}\lesssim \frac{\theta}{2} \lesssim \pi$. On both arcs we have $\abs{\cos \frac \theta 2} > c_0$ for some constant $c_0 > 0$ and so the real part of $\pi \sqrt R e^{i\frac \theta 2}$ diverges to infinite and we have the following limit
\[
\abs{\frac{\sin\pi \alpha}{\cos\pi\alpha}} \To 1 \quad \text{ as } \quad R \To \infty.
\]
It follows that for large $R > 0$ we have
\[
\abs{\frac{\sin \pi \alpha}{\cos \pi \alpha}} < c_3
\]
for some constant $c_3 > 1$. Combining these estimates, we have
\[
\abs{G_2( y, \eta, \lambda)} \leq \frac{c_1}{\sqrt R} \exp\left(-\sqrt R ( y + \eta)\sin\frac\theta 2 \right) + \frac{c_2}{\sqrt R} \exp\left(-\sqrt R (\eta -  y)\sin\frac\theta 2 \right).
\]

Write $\lambda = r e^{i\phi}$ with $\phi \in [-\pi, \pi)$ and we have $r \sim R$ and $- \frac \pi 2 \leq \phi \leq \frac \pi 2$. It follows that 
\[
\abs{G_1(x, \xi, - \lambda)} = \frac{e^{-\sqrt r \abs{x-\xi}\cos\frac\phi 2}}{2\sqrt r}
\]
and then we have
\begin{eqnarray}
\abs{G_1(x,\xi,-\lambda)G_2(y, \eta, \lambda)} & \leq & \frac{c_1}{R}\exp\left(-\sqrt R (y + \eta)\sin \frac \theta 2- \sqrt R \abs{x-\xi}\cos \frac \phi 2\right) \nonumber \\
& & + \frac{c_2}{R}\exp\left(-\sqrt R (\eta - y)\sin \frac \theta 2- \sqrt R \abs{x-\xi}\cos \frac \phi 2\right). \label{eqn:G1G2estimate1}
\end{eqnarray}
When $\abs{x-\xi}> 0$, since $\frac \phi 2 \in [-\pi/4, \pi/4]$ we have $\cos \frac \phi 2 \geq \frac{\sqrt 2}{2}$. Note that $\eta - y \geq 0$ by assumption and $\sin \frac \theta 2 \geq 0$, so we have 
\[
\abs{G_1(x,\xi, -\lambda) G_2(y, \eta, \lambda)} \leq \frac c R e^{- k \sqrt R}
\]
for some constant $k > 0$ when $R$ is large. On the other hand we have
\[
\abs{d\lambda} = R d\theta
\]
and so we have
\begin{eqnarray*}
\abs{\int_{C_1 \cup C_2} G_1(x,\xi, -\lambda)G_2(y,\eta, \lambda) d\lambda} & \leq & c \int_{[0, \pi/2]\cup [3\pi/2, 2\pi]} e^{-k \sqrt R} d\theta
\end{eqnarray*}
which converges to zero as $R \To \infty$.  

When $\abs{x-\xi} = 0$, we have $\eta -y > 0$. Let $c> 0$ be a constant and we show that 
\[
\lim_{R \To \infty}\int_{\theta_1}^{\theta_2} \exp\left(- c\sqrt R \sin \frac\theta 2\right) d\theta = 0
\]
on $C_1$. Since $\cos \frac \theta 2 \geq c_3 > 0$ on $C_1$ for some constant $c_3> 0$, we have
\begin{eqnarray*}
\int_{\theta_1}^{\theta_2} \exp\left( - c\sqrt R \sin\frac \theta 2\right) d\theta & \leq & \frac{1}{c_3}\int_{\theta_1}^{\theta_2}\cos\frac \theta 2 \exp\left(-c\sqrt R \sin \frac \theta 2\right) d\theta \\
& = & - \frac{2}{c c_3\sqrt R}\int_{\theta_1}^{\theta_2} d\exp\left(-c \sqrt R \sin \frac \theta 2\right) \\
& = & \frac{2}{c c_3\sqrt R} \left\{\exp\left(-c \sqrt R \sin \frac{\theta_1}{2}\right) - \exp\left(-c \sqrt R \sin \frac{\theta_2}{2}\right)\right\}
\end{eqnarray*}
which converges to zero as $R \To \infty$. A similar argument shows that the integral converges to zero on the arc $C_2$. Apply this convergence in equation (\ref{eqn:G1G2estimate1}) by taking $c = \eta + y$ and $c= \eta - y$ and then we have 
\[
\lim_{R \To \infty} \abs{\int_{C_1 \cup C_2} G_1(x, \xi, -\lambda)G_2(y, \eta, \lambda)d\lambda} = 0.
\] 
This finishes the proof of the claim on $C_1\cup C_2$. 

The identity (\ref{eqn:Cauchyidentity}) implies that 
\[
G(x, y, \xi, \eta) = \frac{1}{2\pi i}\int_{- \Gamma_1 \cup - \Gamma_2} G_1(x,\xi, -\lambda)G_2(y, \eta, \lambda) d\lambda.
\]
Let $\lambda = \frac 1 4 + s^2$ with $s\in (0, \infty)$. Then on $\Gamma_1$ and $\Gamma_2$ we have
\[
G_1(x,\xi,-\lambda) = \frac{e^{-\abs{x-\xi}\sqrt{s^2 + 1/4}}}{2\sqrt{s^2 + 1/4}}.
\]
On $\Gamma_1$ we have $\alpha = - i s$ and then the Green function $G_2$ is given by 
\[
G_2(y, \eta, \lambda) = -\frac{1}{2i s}\left(w(s,y) \bar w(s,\eta) + \bar a(s) \bar w(s,y) \bar w(s, \eta)\right).
\]
On $\Gamma_2$ we have $\alpha = i s$ and the Green function $G_2$ is given by
\[
G_2(y, \eta, \lambda) = \frac{1}{2i s}\left(\bar w(s, y) w(s, \eta) + a(s) w(s,y) w(s,\eta)\right).
\]
So we have
\begin{eqnarray*}
G(x, y, \xi, \eta) & = & \frac{1}{2\pi i}\int_0^\infty \frac{1}{-2i s}\left(w(s, y)  + \bar a(s) \bar w(s,y) \right)\bar w(s, \eta) \frac{e^{-\abs{x-\xi}\sqrt{s^2 + 1/4}}}{2\sqrt{s^2 + 1/4}} 2s ds  \\
& & + \frac{1}{2\pi i}\int_\infty^0 \frac{1}{2is} \left(\bar w(s, y) + a(s) w(s,y) \right)w(s,\eta) \frac{e^{-\abs{x-\xi}\sqrt{s^2 + 1/4}}}{2\sqrt{s^2 + 1/4}} 2s ds  \\
& = & \frac{1}{2\pi }\int_0^\infty \Re \left\{a(s) w(s,y)w(s,\eta) + \bar w(s, y) w(s,\eta)\right\} \frac{e^{-\abs{x-\xi}\sqrt{s^2 + 1/4}}}{\sqrt{s^2 + 1/4}} ds
\end{eqnarray*}
which gives formula (\ref{eqn:GreenRe}) when $y \leq \eta$. 

Since the real part of $a(s) w(s,y)w(s,\eta) + \bar w(s,y) w(s,\eta)$ is even in $s$, the kernel of the integral is symmetric in $y$ and $\eta$. Therefore the same formula holds when $y \geq \eta$. 
\end{proof}

\medskip

\section{Asymptotic expansions of the Green's function}

For each fixed $(x,y)\in \Sigma^2$, the Green's function $G(x,y,\xi,\eta)$ has the limit zero as $(\xi, \eta)$ diverges to infinity. In this section we determine the asymptotic expansions along various paths when $(\xi, \eta)$ diverges to infinity, see Theorems \ref{thm:asymGeta0}, \ref{thm:asymGetainfty} and \ref{thm:asymGreenrays}. For the techniques of asymptotic expansion of integrals, we refer to the book \cite{Wongbook}.

\smallskip
 
First we derive a new formula of the integrand in the Green's function. Let 
\begin{equation}\label{eqn:fsy}
f(s,y) = \hF{-\half, -\half + is, 1+ is; e^{-2y} }
\end{equation}
and
\begin{equation}\label{eqn:ksyeta}
k(s,y, \eta) = a(s) e^{-is(y + \eta)} f(s,y) f(s,\eta) + e^{is(y -\eta)}\bar f(s,y) f(s,\eta)
\end{equation}
for $s\geq 0$ and $y, \eta > 0$. Then the Green function in Theorem \ref{thm:Greenintegral} is given by
\begin{eqnarray*}
G(x, y, \xi, \eta) & = & \frac{e^{y + \eta}}{2\pi \sqrt{(e^{2 y}-1)(e^{2\eta}-1)}} \int_{0}^{\infty} \Re k(s,y, \eta) \frac{e^{-\abs{x-\xi}\sqrt{\frac 1 4 + s^2}}}{\sqrt {\frac 1 4 + s^2}} ds \\
& = & \frac{e^{y + \eta}}{\pi \sqrt{(e^{2 y}-1)(e^{2\eta}-1)}} \int_{-\infty}^{\infty} k(s,y, \eta) \frac{e^{-\abs{x-\xi}\sqrt{\frac 1 4 + s^2}}}{\sqrt {\frac 1 4 + s^2}} ds.
\end{eqnarray*}

\begin{lemma}\label{lem:ksyeta}
For $s\geq 0$ and $ y, \eta > 0$ we have
\begin{equation}
k(s, y,\eta) = \frac{\Gamma(3/2+ is)}{\Gamma(is)}\sqrt{\pi} e^{-\frac  y 2}\sinh^2( y) \hF{\frac 34 - \half is, \frac 3 4 + \half i s, 2; - \sinh^2  y} e^{-is\eta}f(s,\eta)
\end{equation}
and
\begin{eqnarray}
\Re k(s, y,\eta) & = & \frac{\pi}{2}s\left(s^2 + \frac 1 4\right)\tanh(\pi s) e^{-\frac{ y +\eta}{2}}\sinh^2 y\sinh^2\eta \nonumber \\
& & \times \hF{\frac 34 - \half is, \frac 3 4 + \half i s, 2; - \sinh^2  y}\hF{\frac 34 - \half is, \frac 3 4 + \half i s, 2; - \sinh^2 \eta}.
\end{eqnarray}
\end{lemma}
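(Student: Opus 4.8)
The plan is to first factor $k$ and then reduce each of the two claimed identities to, respectively, the connection formula of Lemma~\ref{lem:connectionformula} and a short Gamma-function computation. I would begin by noting that both summands in $k(s,y,\eta)$ share the factor $e^{-is\eta}f(s,\eta)$, so that
\[
k(s,y,\eta) = \left[a(s)e^{-isy}f(s,y) + e^{isy}\bar f(s,y)\right]e^{-is\eta}f(s,\eta),
\]
where $\bar f(s,y) = \hF{-\half,-\half - is, 1-is; e^{-2y}}$ is the complex conjugate of $f(s,y)$ for real $s$. The bracket is precisely the combination controlled by Lemma~\ref{lem:connectionformula}: setting $z=is$ there and multiplying through by $\Gamma(1-is)\Gamma(3/2+is)$ converts its left-hand side into the bracket, using $a(s) = -\Gamma(1-is)\Gamma(3/2+is)/[\Gamma(1+is)\Gamma(3/2-is)]$. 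The reflection formula $\Gamma(is)\Gamma(1-is) = \pi/\sin(\pi is)$ then collapses the prefactor, since $\Gamma(1-is)\sin(\pi is) = \pi/\Gamma(is)$, leaving
\[
a(s)e^{-isy}f(s,y)+e^{isy}\bar f(s,y) = \frac{\Gamma(3/2+is)}{\Gamma(is)}\sqrt{\tfrac\pi2}\,\sqrt{1-e^{-2y}}\,(\cosh y)^{-is}\tanh^{3/2} y\,\hF{\tfrac12 is+\tfrac54,\tfrac12 is+\tfrac34,2;\tanh^2 y}.
\]

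The next step is a single Pfaff transformation. Applying $\hF{\alpha,\beta,\gamma;w} = (1-w)^{-\beta}\hF{\gamma-\alpha,\beta,\gamma;w/(w-1)}$ with $w = \tanh^2 y$, so that $w/(w-1) = -\sinh^2 y$ and $(1-w)^{-\beta}=(\cosh y)^{2\beta}$, turns the last hypergeometric into $(\cosh y)^{is+3/2}\hF{\tfrac34-\tfrac12 is,\tfrac34+\tfrac12 is,2;-\sinh^2 y}$; this is the precise choice of the two Pfaff transformations that produces the symmetric parameter pair $\tfrac34\pm\tfrac12 is$. After cancelling the $(\cosh y)^{\pm is}$ factors and simplifying the elementary pieces via $\sqrt{1-e^{-2y}} = \sqrt2\,e^{-y/2}\sqrt{\sinh y}$ and $\tanh^{3/2} y\,(\cosh y)^{3/2} = \sinh^{3/2} y$, the bracket reduces to $\frac{\Gamma(3/2+is)}{\Gamma(is)}\sqrt\pi\,e^{-y/2}\sinh^2 y\,\hF{\tfrac34-\tfrac12is,\tfrac34+\tfrac12is,2;-\sinh^2 y}$, and multiplying by $e^{-is\eta}f(s,\eta)$ gives the first claimed identity.

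For the second identity I would set $C = \sqrt\pi\,\Gamma(3/2+is)/\Gamma(is)$, write $g(s,y) = e^{-y/2}\sinh^2 y\,\hF{\tfrac34-\tfrac12is,\tfrac34+\tfrac12is,2;-\sinh^2 y}$, and put $p = e^{-is\eta}f(s,\eta)$, so the first identity reads $k = C\,g(s,y)\,p$. Here $g(s,y)$ is real because conjugation merely interchanges the first two hypergeometric parameters. The first identity applied with $y$ replaced by $\eta$ gives $a(s)p + \bar p = C\,g(s,\eta)$; together with $|a(s)|=1$, $\bar a(s)=1/a(s)$, and the key relation $\bar C\,a(s)=C$ (which follows from the formula for $a(s)$ with $\Gamma(1-is)=-is\,\Gamma(-is)$ and $\Gamma(1+is)=is\,\Gamma(is)$), one obtains $Cp + \bar C\bar p = |C|^2 g(s,\eta)$, hence $\Re(Cp)=\tfrac12|C|^2 g(s,\eta)$ and $\Re k = \tfrac12|C|^2 g(s,y)g(s,\eta)$. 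It then remains to evaluate $|C|^2 = \pi\,\Gamma(3/2+is)\Gamma(3/2-is)/[\Gamma(is)\Gamma(-is)]$, which the standard evaluations $|\Gamma(\tfrac12+is)|^2 = \pi/\cosh(\pi s)$ and $\Gamma(is)\Gamma(-is) = \pi/[s\sinh(\pi s)]$ reduce to $\pi s(s^2+\tfrac14)\tanh(\pi s)$; substituting and recalling that $g(s,y)g(s,\eta) = e^{-(y+\eta)/2}\sinh^2 y\sinh^2\eta\,\hF{\cdots}\hF{\cdots}$ yields the stated formula.

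I expect the main obstacle to be bookkeeping rather than anything conceptual: choosing the correct one of the two Pfaff transformations so that the parameters come out symmetric (and carefully tracking the branch of $(\cosh y)^{-z}$ through the cancellation of the $(\cosh y)^{\pm is}$ factors), together with verifying the cancellation $\bar C\,a(s)=C$ that forces the $p$-dependence to drop out of $\Re(Cp)$. Both are finite checks with the Gamma recursions, but the identity is sensitive to sign and phase conventions, so these are the places where care is essential.
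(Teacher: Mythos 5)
Your proposal is correct and follows essentially the same route as the paper: both identities rest on Lemma \ref{lem:connectionformula} with $z=is$, the same Pfaff transformation producing the symmetric parameters $\tfrac34\pm\tfrac12 is$, and the same Gamma-function evaluations (the paper's proof obtains the real-part formula by applying the connection formula a second time in the $\eta$-variable, which is exactly the computation you repackage via the relation $\bar C\,a(s)=C$). All of your intermediate identities, including $\bar C\,a(s)=C$ and $|C|^2=\pi s(s^2+\tfrac14)\tanh(\pi s)$, check out.
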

\begin{proof}
From Lemma \ref{lem:connectionformula} we have 
\begin{eqnarray*}
\frac{k(s, y,\eta)}{e^{-i s \eta}f(s,\eta)} & = & \Gamma(1-i s)\Gamma(3/2 + is)\left(\frac{e^{is y}f(-s, y)}{\Gamma(1-i s)\Gamma(3/2 + is )} - \frac{e^{-is y}f(s, y)}{\Gamma(1+is)\Gamma(3/2- is)}\right) \\
& = & \Gamma(1-is)\Gamma(3/2+ is)\frac{\sqrt{1-e^{-2 y}}}{\sqrt{2\pi}}(\cosh  y)^{-is}\tanh^{\frac 3 2}( y)\sin(i \pi s) \\
& & \times \hF{\frac 5 4+ \half is, \frac 3 4 + \half i s,2; \tanh^2 y}.
\end{eqnarray*}
By the transformation formula
\[
\hF{a,b,c; w} = (1-w)^{-a}\hF{a, c-b,c; \frac{w}{w-1}}\quad \text{with} \quad \abs{\arg(1-w)}< \pi,
\]
we have
\begin{eqnarray*}
\frac{k(s, y,\eta)}{e^{-is \eta}f(s,\eta)} & = & \Gamma(1-is)\Gamma(3/2+ is)\frac{\sqrt{1-e^{-2 y}}}{\sqrt{2\pi}}\sinh^{\frac 32}( y)\sin(i \pi s) \hF{\frac 3 4- \half is, \frac 3 4+ \half is, 2; - \sinh^2 y} \\
& = & \sqrt{\pi} \frac{\Gamma(3/2+is)}{\Gamma(is)}\frac{\sqrt{1-e^{-2 y}}}{\sqrt 2}\sinh^{\frac 3 2}( y)\hF{\frac 34 - \half is, \frac 3 4 + \half i s, 2; - \sinh^2  y} \\
& = & \sqrt{\pi} e^{-\frac  y 2}\frac{\Gamma(3/2 + is)}{\Gamma(i s)}\sinh^2( y)\hF{\frac 34 - \half is, \frac 3 4 + \half i s, 2; - \sinh^2  y}, 
\end{eqnarray*}
which gives us the formula of $k(s, y,\eta)$. 

For the real part of $k(s, y,\eta)$, we have 
\begin{eqnarray*}
2\Re k(s, y, \eta) & = & \sqrt{\pi}e^{-\frac  y 2}\sinh^2( y)\hF{\frac 34 - \half is, \frac 3 4 + \half i s, 2; - \sinh^2  y} \\
& & \times \left(\frac{\Gamma(3/2+ is)}{\Gamma(i s)}e^{-is \eta}f(s,\eta)+ \frac{\Gamma(3/2- is)}{\Gamma(-is)}e^{is \eta}f(-s, \eta)\right),
\end{eqnarray*}
and
\begin{eqnarray*}
& & \frac{\Gamma(3/2+ is)}{\Gamma(i s)}e^{-is \eta}f(s,\eta)+ \frac{\Gamma(3/2- is)}{\Gamma(-is)}e^{is \eta}f(-s, \eta) \\
& = & (- is)\Gamma(3/2-i s)\Gamma(3/2 + is)\left(\frac{e^{is\eta}f(-s,\eta)}{\Gamma(1-i s)\Gamma(3/2 + is )} - \frac{e^{-is\eta}f(s,\eta)}{\Gamma(1+is)\Gamma(3/2- is)}\right) \\
& = & (-is)\Gamma(3/2- is)\Gamma(3/2 + is)\frac{1}{\Gamma(1-is)\Gamma(is)}\sqrt{\pi}e^{-\frac y 2}\sinh^2(\eta)\hF{\frac 3 4 + \half i s, \frac 3 4 -\half is, 2; - \sinh^2 \eta} \\
& = & \frac{\Gamma(3/2+ is)\Gamma(3/2- is)}{\Gamma(is)\Gamma(-is)}\sqrt{\pi}e^{-\frac \eta 2}\sinh^2(\eta) \hF{\frac 3 4 + \half is, \frac 3 4 - \half is, 2; -\sinh^2 \eta} \\
& = & \sqrt{\pi} s\left(s^2 + \frac 1 4\right)\tanh(\pi s) e^{-\frac \eta 2}\sinh^2(\eta)\hF{\frac 3 4 + \half is, \frac 3 4 - \half is, 2; -\sinh^2 \eta}. 
\end{eqnarray*}
So they give us the desired formula of $\Re k(s, y,\eta)$.  
\end{proof}

\subsection{Asymptotic expansion at $\eta = 0$}
Recall Pochhammer's symbol
\[
(a)_n = \frac{\Gamma(a+n)}{\Gamma(a)} \quad\text{with}\quad  a \ne 0, -1, -2, \ldots
\]
and the psi function (or digamma function)
\[
\psi(z) = \frac{\Gamma'(z)}{\Gamma(z)} \quad \text{with}\quad  z \ne 0, -1, -2, \ldots.
\]

\begin{thm}\label{thm:asymGeta0}
For any fixed $(x, y)\in \Sigma^2$ and $\xi \ne x$, the Green function $G(x, y, \xi, \eta) $ has the following expansion near $\eta = 0$:
\begin{eqnarray}
G(x, y, \xi, \eta) & = & \frac{\eta^{\frac 3 2}}{\sqrt {2\pi} \sqrt{1-e^{-2y}}} \int_{-\infty}^{\infty} \frac{\Gamma(3/2+ is)}{\Gamma( is)} e^{- is y}f(s,y) \frac{1}{\sqrt{s^2 + \frac 1 4}}e^{-\abs{x-\xi}\sqrt{s^2 + \frac 1 4}} ds \nonumber \\
& & + O(\eta^{\frac 5 2}). \label{eqn:asymGeta0}
\end{eqnarray}
\end{thm}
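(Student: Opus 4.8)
The plan is to start from the integral representation of the Green's function in the form
\[
G(x,y,\xi,\eta) = \frac{e^{y+\eta}}{\pi \sqrt{(e^{2 y}-1)(e^{2\eta}-1)}} \int_{-\infty}^{\infty} k(s,y,\eta)\, \frac{e^{-\abs{x-\xi}\sqrt{\frac 1 4 + s^2}}}{\sqrt{\frac 1 4 + s^2}}\, ds
\]
provided by Theorem \ref{thm:Greenintegral}, and to isolate all of the $\eta$-dependence in a shape amenable to Taylor expansion at $\eta=0$. Since we want $y$ to remain ``un-expanded,'' the first move is to transfer the role of the expansion variable onto $\eta$. Using $\bar a(s)=a(-s)$ and $\bar f(s,\cdot)=f(-s,\cdot)$ one checks that $\overline{k(s,y,\eta)}=k(-s,y,\eta)$, so $\Re k$ is even and $\Im k$ is odd in $s$; combined with the evenness of the kernel $e^{-\abs{x-\xi}\sqrt{s^2+1/4}}/\sqrt{s^2+1/4}$ and the symmetry of $\Re k$ in $(y,\eta)$ from Lemma \ref{lem:ksyeta}, this yields $\int_{-\infty}^\infty k(s,y,\eta)(\cdots)\,ds = \int_{-\infty}^\infty k(s,\eta,y)(\cdots)\,ds$. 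I may therefore substitute the formula of Lemma \ref{lem:ksyeta} with $y$ and $\eta$ interchanged,
\[
k(s,\eta,y)=\frac{\Gamma(3/2+ is)}{\Gamma(is)}\sqrt{\pi}\, e^{-\frac{\eta}{2}}\sinh^2\eta\;\hF{\frac 3 4 - \half is, \frac 3 4 + \half i s, 2; - \sinh^2 \eta}\; e^{-is y}f(s,y),
\]
which packages the entire $\eta$-dependence into the scalar factor $e^{-\eta/2}\sinh^2\eta/\sqrt{e^{2\eta}-1}$ and into the hypergeometric factor with argument $-\sinh^2\eta$.

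The second step is the elementary expansion of these $\eta$-factors. Pulling the scalars out of the integral and using $\sqrt{e^{2\eta}-1}=\sqrt{2\eta}\,(1+O(\eta))$, $\sinh^2\eta=\eta^2(1+O(\eta^2))$ and $e^{\pm\eta/2}=1+O(\eta)$, the prefactor collapses to
\[
\frac{e^{y+\eta}\sqrt{\pi}\, e^{-\eta/2}\sinh^2\eta}{\pi\sqrt{(e^{2 y}-1)(e^{2\eta}-1)}} = \frac{\eta^{3/2}}{\sqrt{2\pi}\,\sqrt{1-e^{-2y}}}\bigl(1+O(\eta)\bigr),
\]
after using $e^{y}/\sqrt{e^{2y}-1}=1/\sqrt{1-e^{-2y}}$. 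It then remains to show that the surviving $s$-integral, carrying the extra factor $\hF{\frac 3 4 - \half is, \frac 3 4 + \half i s, 2; - \sinh^2 \eta}$, converges as $\eta\to 0$ to
\[
I_0(x,y,\xi)=\int_{-\infty}^{\infty} \frac{\Gamma(3/2+ is)}{\Gamma(is)}\, e^{- is y}f(s,y)\, \frac{e^{-\abs{x-\xi}\sqrt{s^2+\frac 1 4}}}{\sqrt{s^2 + \frac 1 4}}\, ds,
\]
obtained by letting the hypergeometric factor tend to $\hF{\cdots;0}=1$, with an error of order $O(\eta^2)$, so that both error sources are absorbed into the stated $O(\eta^{5/2})$ remainder. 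Convergence of $I_0$ itself is where the hypothesis $\xi\ne x$ enters: by the ratio asymptotics for the Gamma function one has $\abs{\Gamma(3/2+is)/\Gamma(is)}\sim\abs{s}^{3/2}$, while the second estimate of Lemma \ref{lem:asymptoticGammaF} (with $z=is$) shows that $f(s,y)$ stays bounded as $\abs{s}\to\infty$; hence the integrand of $I_0$ is $O\bigl(\abs{s}^{1/2}e^{-\abs{x-\xi}\abs{s}}\bigr)$, which is absolutely integrable precisely because $\abs{x-\xi}>0$.

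The main obstacle, and the final step, is justifying the interchange of the $\eta\to 0$ expansion with the $s$-integration. This is delicate because the hypergeometric factor effectively depends on $s\sinh\eta$ and is \emph{not} uniformly close to $1$ across all $s$: once $\abs{s}\gtrsim 1/\eta$ it ceases to be near $1$. I would resolve this by splitting the integral according to whether $\abs{s}\le \eta^{-1/2}$ or $\abs{s}>\eta^{-1/2}$. On $\abs{s}\le \eta^{-1/2}$ one has $\abs{s}\sinh\eta\to 0$, so the power series of the hypergeometric yields the uniform estimate $\abs{\hF{\frac 3 4 - \half is, \frac 3 4 + \half i s, 2; - \sinh^2 \eta}-1}\le C(1+s^2)\sinh^2\eta$; multiplying by the integrable majorant $\abs{s}^{1/2}e^{-\abs{x-\xi}\abs{s}}$ and integrating gives a contribution of size $O(\sinh^2\eta)=O(\eta^2)$. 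On $\abs{s}>\eta^{-1/2}$ the decay $e^{-\abs{x-\xi}\abs{s}}$ is already smaller than any power of $\eta$, which—together with the fact that $\sinh^2\eta$ times the hypergeometric factor stays bounded, a consequence of the boundedness of $k(s,\eta,y)$ read off from its first expression in Lemma \ref{lem:ksyeta}—renders this tail negligible. Combining the two regimes yields the $O(\eta^2)$ control of the $s$-integral and hence the expansion of the theorem.
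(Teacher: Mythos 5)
Your proposal is correct, but it reaches the coefficient by a genuinely different route than the paper. The paper's proof expands $f(s,\eta)$ directly at $\eta=0$ using the logarithmic connection formula for $\hF{a,b,c;z}$ at $z=1$ in the case $c=a+b+2$, computes $e^{-is\eta}f(s,\eta)$ to order $\eta^2$, and then observes that the $O(1)$ and $O(\eta)$ contributions to $k(s,y,\eta)$ multiply a purely imaginary factor and hence drop out of $\Re k$; a further Gamma-function identity is needed to recast the surviving $\eta^2$ coefficient as $\sqrt{\pi}\,\Re\bigl\{\tfrac{\Gamma(3/2+is)}{\Gamma(is)}e^{-isy}f(s,y)\bigr\}$. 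You bypass all of that by exploiting the parity of $\Re k$ and $\Im k$ in $s$ together with the symmetry of $\Re k$ in $(y,\eta)$ from Lemma \ref{lem:ksyeta}, so that the closed form of $k$ with arguments swapped makes the $\sinh^2\eta$ vanishing manifest and reduces the expansion to the trivial limit $\hF{\tfrac34-\tfrac12 is,\tfrac34+\tfrac12 is,2;-\sinh^2\eta}\To 1$; this is essentially the same mechanism by which the paper later handles the limit $y\To 0$ in the proof of Theorem \ref{thm:MartincompSigma}. Your version also supplies something the paper leaves implicit: a justification for interchanging the $\eta\To 0$ expansion with the $s$-integration, via the split at $\abs{s}=\eta^{-1/2}$ and the bound $\abs{\hF{\cdots;-\sinh^2\eta}-1}\le C(1+s^2)\sinh^2\eta$ on the inner range, with the exponential factor $e^{-\abs{x-\xi}\abs{s}}$ (available precisely because $\xi\ne x$) killing the outer range. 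The only point worth tightening is the tail estimate: rather than appealing vaguely to ``boundedness of $k(s,\eta,y)$,'' note that $\abs{a(s)}=1$ and the uniform boundedness of $f$ give $\abs{k(s,\eta,y)}\le 2\abs{f(s,\eta)}\,\abs{f(s,y)}\le C$ directly from the defining formula (\ref{eqn:ksyeta}), after which the tail is $O\bigl(e^{-\abs{x-\xi}\eta^{-1/2}}\bigr)$ and negligible. With that noted, both error sources are $O(\eta^2)$ relative to the $\eta^{3/2}$ prefactor, yielding the stated $O(\eta^{5/2})$.
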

\begin{remark}
Note that when $s\To \infty$ we have the asymptotic expansion
\begin{eqnarray*}
\frac{\Gamma(3/2+ is)}{\Gamma(i s)}f(s,y) & = & i^{\frac 3 2} \sqrt{1-e^{-2 y}} s^{\frac 3 2} \left(1+ O(s^{-1})\right),
\end{eqnarray*}
so the improper integral in Theorem \ref{thm:asymGeta0} does not converge when $\abs{x-\xi} = 0$.
\end{remark}

\begin{proof}
For any $c = a+ b + m$ with $m=2$, we recall the following formula for $z\in (0, 1)$:
\begin{eqnarray*}
\frac{1}{\Gamma(c)}\hF{a,b,c;z} & = & \frac{(m-1)!}{\Gamma(a+m)\Gamma(b+m)} \sum_{k=0}^{m-1}\frac{(a)_k(b)_k (m-k-1)!}{k!}(z-1)^k \\
& & - \frac{(z-1)^m}{\Gamma(a)\Gamma(b)}\sum_{k=0}^\infty \frac{(a+m)_k (b+m)_k}{k! (k+m)!} (1-z)^k \times \\
& &  \Big{\{}\log(1-z) - \psi(k+1) - \psi(k+m+1) + \psi(a+k+m) + \psi(b+k+m)\Big{\}}.
\end{eqnarray*}
It follows from the above formula and (\ref {eqn:fsy}) that 
\begin{eqnarray*}
f(s,\eta) & = & \frac{\Gamma(1+i s)}{\Gamma(3/2)\Gamma(3/2+ is)}\left(1 + \half \left(\half - is\right)(e^{-2\eta}-1)\right)  \\
& & - \frac{\Gamma(1+is)}{\Gamma(-1/2)\Gamma(-1/2+ is) 2}(e^{-2\eta}-1)^2\times \\
& & \left(\log(1-e^{-2\eta}) -\psi(1) - \psi(3) + \psi(3/2) + \psi(3/2 + is)\right)  + O(\eta^3) \\
& = & \frac{\Gamma(1+is)}{\Gamma(3/2)\Gamma(3/2+ is)}\left(1- \left(\half - i s\right)(\eta -\eta^2)\right) \\
& & - \frac{\Gamma(1+i s)}{\Gamma(3/2)\Gamma(3/2+ is)}\frac{1+4s^2}{8}\eta^2\times \\
& & \left(\log(1-e^{-2\eta}) -\psi(1) - \psi(3) + \psi(3/2) + \psi(3/2 + is)\right)  + O(\eta^3). 
\end{eqnarray*}
Here we have also used the identity
\[
\Gamma(-1/2)\Gamma(-1/2 + is) = \frac{16}{1+4s^2}\Gamma(3/2)\Gamma(3/2+ is).
\]
So we have
\begin{eqnarray*}
e^{-is \eta} f(s,\eta) & = & \frac{\Gamma(1+is)}{\Gamma(3/2)\Gamma(3/2+ is)}\left\{\left[1- is \eta -\half s^2 \eta^2\right]\left[1- \left(\half - is\right)(\eta - \eta^2)\right]\right. \\
& & \left. - \frac{1+4s^2}{8}\eta^2\left(\log(1-e^{-2\eta}) -\psi(1) - \psi(3) + \psi(3/2) + \psi(3/2 + is)\right)\right\}  + O(\eta^3) \\
& = & \frac{\Gamma(1+is)}{\Gamma(3/2)\Gamma(3/2+ is)}\left\{1-\half \eta + \half \left(1- i s + s^2\right) \eta^2 \right. \\
& & \left. - \frac{1+4s^2}{8}\eta^2\left(\log(1-e^{-2\eta}) -\psi(1) - \psi(3) + \psi(3/2) + \psi(3/2 + is)\right)\right\}  + O(\eta^3)
\end{eqnarray*}
and then
\begin{eqnarray*}
k(s,y, \eta) & = & \left(a(s) e^{-is y} f(s, y) + e^{i s  y}\bar f(s, y)\right) e^{-is \eta}f(s,\eta) \\
& = & \left(- \frac{\Gamma(1-is)}{\Gamma(3/2)\Gamma(3/2 - is)} e^{- is  y}f(s, y) + \frac{\Gamma(1+ is)}{\Gamma(3/2)\Gamma(3/2 + is)}e^{is y}\bar f(s, y)\right) \left\{\cdots \right\} + O(\eta^3),
\end{eqnarray*}
where the function in $\left\{\cdots\right\}$ has pure-imaginary value and is given by 
\[
\{\cdots \} = -i \left(\half s \eta^2 +\frac{1+4s^2}{8} \Im \psi(3/2 + i s) \eta^2\right).
\]
It follows that 
\[
\Re k(s, y, \eta) = - \Im \left\{\frac{\Gamma(1-is)}{\Gamma(3/2)\Gamma(3/2- is)}e^{-is y} f(s, y) \right\}\left(s + \left(s^2 + \frac 1 4\right) \Im \psi(3/2 + is)\right) \eta^2 + O(\eta^3),
\]
which can be further simplified to 
\[
\Re k(s, y, \eta) = {\sqrt \pi}\Re\left\{ \frac{\Gamma(3/2 + is)}{\Gamma( is)}e^{-i s y} f(s, y)\right\} \eta^2 + O(\eta^3).
\]
The expansion formula of $G(x, y, \xi, \eta)$ follows by combining the above identity with the expansion
\[
\frac{e^{\eta}}{\sqrt{e^{2\eta}-1}} = \frac{1}{\sqrt 2}\eta^{-\half} + O(\eta^\half).
\]
\end{proof}

\subsection{Asymptotic expansion at $\eta = \infty$} 
\begin{thm}\label{thm:asymGetainfty}
For fixed $(x, y) \in \Sigma^2$ and fixed $\xi > 0$, the Green function $G(x, y, \xi, \eta)$ has the following expansion as $\eta \To \infty$:
\begin{equation}\label{eqn:asymGetainfty}
G(x, y,\xi, \eta) = \frac{2}{\sqrt \pi}\frac{(e^y -1)^{2}}{e^{\frac y 2}\sqrt{(e^{2y} -1)}} \frac{e^{\half \eta}}{\eta^\half (e^{2\eta}-1)^\half}\left(1 + O(e^{-\frac 3 2\eta})\right).
\end{equation}
\end{thm}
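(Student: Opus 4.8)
The plan is to start from the representation in Theorem~\ref{thm:Greenintegral} and rewrite the integrand using Lemma~\ref{lem:ksyeta}, which factors the $\eta$-dependence of $k$ cleanly. Setting
\[
H(s,y) = \sqrt\pi\, e^{-\half y}\frac{\Gamma(3/2+is)}{\Gamma(is)}\sinh^2 y\,\hF{\tfrac34-\half is,\tfrac34+\half is,2;-\sinh^2 y},
\]
Lemma~\ref{lem:ksyeta} gives $k(s,y,\eta)=H(s,y)e^{-is\eta}f(s,\eta)$ for $s\ge0$ (indeed the factor $f(s,\eta)$ cancels the one in (\ref{eqn:ksyeta}), so also $H(s,y)=a(s)e^{-isy}f(s,y)+e^{isy}\bar f(s,y)$), and the symmetry $k(-s)=\overline{k(s)}$, $\overline{H(-s,y)}=H(s,y)$ extends this to all real $s$. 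Hence
\[
G(x,y,\xi,\eta)=\frac{e^{y+\eta}}{\pi\sqrt{(e^{2y}-1)(e^{2\eta}-1)}}\int_{-\infty}^\infty H(s,y)\,f(s,\eta)\,\frac{e^{-\abs{x-\xi}\sqrt{s^2+\frac14}}}{\sqrt{s^2+\frac14}}\,e^{-is\eta}\,ds.
\]
Since $f(s,\eta)=\hF{-\half,-\half+is,1+is;e^{-2\eta}}=1+O(e^{-2\eta})$, I would first replace $f(s,\eta)$ by $1$, pushing the difference into the error, reducing matters to the oscillatory integral $I(\eta)=\int_{-\infty}^\infty\Psi(s)e^{-is\eta}ds$ with $\Psi(s)=H(s,y)\,e^{-\abs{x-\xi}\sqrt{s^2+1/4}}/\sqrt{s^2+1/4}$, whose phase is exactly linear in $\eta$.

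Both the decay rate $e^{-\eta/2}$ and the power $\eta^{-1/2}$ are dictated by the branch point of $1/\sqrt{s^2+1/4}$ at $s=-i/2$. The quotient $\Gamma(3/2+is)/\Gamma(is)$ is analytic in the lower half-plane (its poles and the zeros coming from $1/\Gamma(is)$ lie on the nonnegative imaginary axis), and the hypergeometric factor is entire in $s$, so $s=-i/2$ is the only singularity met when the contour is pushed downward (where $e^{-is\eta}$ decays). Collapsing onto the cut $[-i/2,-i\infty)$ and setting $s=-i(\half+t)$ turns $I(\eta)$ into a Laplace integral $e^{-\eta/2}\int_0^\infty(\text{disc}\,\Psi)\,e^{-t\eta}\,dt$; since the discontinuity of $1/\sqrt{s^2+1/4}$ behaves like $t^{-1/2}$ at $t=0$ while $e^{-\abs{x-\xi}\sqrt{s^2+1/4}}\To1$ there, Watson's lemma gives $I(\eta)\sim c\,H(-i/2,y)\,\eta^{-1/2}e^{-\eta/2}$. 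Equivalently, one may recognize the model integral $\int_{-\infty}^\infty e^{-is\eta}/\sqrt{s^2+1/4}\,ds=2K_0(\eta/2)$ and use $K_0(\eta/2)\sim\sqrt{\pi/\eta}\,e^{-\eta/2}$, which pins the numerical constant at $2/\sqrt\pi$.

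The $y$-profile then falls out of the closed-form value of $H$ at the branch point. At $s=-i/2$ one has $is=\half$, so $\Gamma(3/2+is)/\Gamma(is)=\Gamma(2)/\Gamma(1/2)=1/\sqrt\pi$, and with $\hF{\half,1,2;-\sinh^2 y}=1/\cosh^2(\half y)$ together with $\sinh^2 y=4\sinh^2(\half y)\cosh^2(\half y)$ I get $H(-i/2,y)=e^{-\half y}\cdot4\sinh^2(\tfrac y2)=(e^y-1)^2/e^{3y/2}$. Multiplying by the $y$-part $e^{y}/\sqrt{e^{2y}-1}$ of the prefactor produces exactly $(e^y-1)^2/(e^{y/2}\sqrt{e^{2y}-1})$, the function $W_0$ of Theorem~\ref{thm:uniquenessWpos}, while the $\eta$-part $e^\eta/\sqrt{e^{2\eta}-1}$ combined with $\eta^{-1/2}e^{-\eta/2}$ yields $e^{\eta/2}/(\eta^{1/2}(e^{2\eta}-1)^{1/2})$; assembling these reproduces (\ref{eqn:asymGetainfty}).

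The main obstacle I expect is justifying the contour deformation and the localization at $s=-i/2$, not the algebra. Because $\abs{a(s)}=1$ and $f(s,y)$ stays bounded as $\abs{s}\To\infty$ (the estimate quoted after Theorem~\ref{thm:asymGeta0} shows $\Gamma(3/2+is)f(s,y)/\Gamma(is)\sim s^{3/2}$), the coefficient $H(s,y)$ is bounded, so $\Psi(s)$ decays only like $\abs{s}^{-1}$ and $I(\eta)$ converges merely conditionally when $x=\xi$. One must therefore carry out the deformation onto the cut (or the reduction to $2K_0(\eta/2)$) through a limiting/regularization argument, and verify that the variation of $H$ away from $s=-i/2$, the remainder $f(s,\eta)-1=O(e^{-2\eta})$, and the next term in the large-argument expansion of $\hF{\tfrac34-\half is,\tfrac34+\half is,2;-\sinh^2\eta}$ (of relative size $O(1/\sinh^2\eta)=O(e^{-2\eta})$) all contribute only at the order recorded in the stated remainder $O(e^{-3\eta/2})$.
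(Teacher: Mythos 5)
Your proposal is correct and follows essentially the same route as the paper: the paper's Propositions 5.3 and 5.4 carry out exactly your plan, working with $p(z)e^{-\eta z}$ in the variable $z=is$ so that your cut $[-i/2,-i\infty)$ in the $s$-plane becomes their cut $[\tfrac12,\infty)$ on the positive real $z$-axis, after which Watson's lemma at the branch point produces the same coefficient $e^{-\frac32 y}(e^y-1)^2$ that you obtain from $H(-i/2,y)$, and the factor $f(s,\eta)=1+O(e^{-2\eta})$ is peeled off at the end just as you do at the start. The technical points you flag (justifying the arc estimates for the deformation, handling the merely conditional convergence when $x=\xi$, and the exponential growth of $q$ along the cut, controlled by assuming $\eta-y\geq 2$) are precisely the ones the paper's proof addresses.
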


We need some preparations and first consider the asymptotic expansion of the following integral as $\eta \To \infty$,
\begin{equation}\label{eqn:Ietainfty}
I(\eta) = \int_{-\infty}^\infty \left(a(s) e^{-i s y}f(s, y) + e^{i s y}\bar f(s, y)\right)\frac{e^{-A \sqrt{s^2 + \frac 1 4}}}{\sqrt{s^2 +\frac 1 4}} e^{-i s\eta}ds
\end{equation}
where $A = \abs{x-\xi} \geq 0$.  We may assume $\eta -y$ is bounded below, say $\eta - y \geq 2$. 

For fixed $\xi > 0$, define the following functions on the complex plane:
\begin{eqnarray}\label{eqn:qetainfty}
q(z) & = & - \ \frac{\Gamma(1- z)\Gamma(3/2 + z)}{\Gamma(1+ z)\Gamma(3/2- z)} e^{-z y} \hF{-\half, -\half + z, 1+ z; e^{-2 y}} \nonumber \\
& & + \ e^{z y} \hF{-\half, -\half - z, 1- z; e^{-2 y}},  
\end{eqnarray}
and
\begin{equation}\label{eqn:petainfty}
p(z) = q(z)\frac{e^{ i A\sqrt{z^2 - \frac 1 4}}}{\sqrt{z^2 - \frac 1 4}}.
\end{equation}
Following the proof of Lemma \ref{lem:ksyeta}, the function $q(z)$ can be simplified as
\begin{equation}
q(z) = \sqrt{\pi}e^{-\frac  y 2}\sinh^2 y \frac{\Gamma(z+3/2)}{\Gamma(z)}\hF{\frac 3 4 - \half z, \frac 3 4+ \half z,2; - \sinh^2 y}. \label{eqn:q1F}
\end{equation}

We specify the branch of $\sqrt{z^2 - \frac 1 4}$ we choose: write
\begin{eqnarray*}
z- \half = r_1 e^{i \theta_1} & \text{with} & r_1 \geq 0 \text{ and } \theta_1 \in [0, 2\pi) \\
z+ \half = r_2 e^{i \theta_2} & \text{with} & r_2 \geq 0 \text{ and } \theta_2 \in [-\pi, \pi),
\end{eqnarray*}
then we take
\[
\sqrt{z^2 - \frac 1 4} = \sqrt{r_1 r_2} e^{i \frac{\theta_1 + \theta_2}{2}}
\]
and it has the branch cut $(-\infty, \half] \cup [\half, \infty)$. In particular, when $z = i s$ with $s \in \Real$, we have $\theta_1 + \theta_2 = \pi$ and
\[
\sqrt{(is)^2 - \frac 1 4} = i \sqrt{s^2 + \frac 1 4}.
\]
It follows that 
\[
\left(a(s) e^{- is  y} f(s, y) + e^{is  y} \bar f(s, y) \right)\frac{e^{-A \sqrt{s^2 + \frac 1 4}}}{\sqrt{s^2 + \frac 1 4}} = i p(is)
\]
and then
\begin{equation}\label{eqn:Ietap}
I(\eta) = \int_{-i \infty}^{i \infty} p(z) e^{-\eta z} dz.
\end{equation} 

\begin{proposition}\label{prop:Ietarealintegral}
The integral $I(\eta)$ has the following form
\begin{eqnarray}
I(\eta) & = & 2\sqrt{\pi} e^{-\frac \eta 2}e^{-\frac  y 2}\sinh^2 y \int_{0}^\infty \frac{\Gamma(2 + t)}{\Gamma(1/2+t)}\hF{\frac 1 2 - \half t, 1 + \half t, 2; - \sinh^2 y} \nonumber \\
& & \times \frac{\cos\left(A \sqrt{t(t+1)}\right)}{\sqrt{t(t+1)}} e^{-\eta t}dt. \label{eqn:asymIetainfty}
\end{eqnarray}
\end{proposition}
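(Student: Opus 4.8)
The plan is to read (\ref{eqn:Ietap}) as a Bromwich integral and to push the vertical contour $\set{\Re z=0}$ rightward onto the branch cut of $\sqrt{z^2-\frac14}$. First I would isolate the singularities of $p$ in the right half-plane. From the form (\ref{eqn:q1F}), $q(z)=\sqrt\pi\,e^{-y/2}\sinh^2 y\,\frac{\Gamma(z+3/2)}{\Gamma(z)}\hF{\frac34-\half z,\frac34+\half z,2;-\sinh^2 y}$ is analytic on $\set{\Re z>0}$, since the poles of $\Gamma(z+3/2)/\Gamma(z)$ all lie in $(-\infty,-\frac32]$ and the hypergeometric factor is entire in $z$ because its argument $-\sinh^2 y$ avoids the cut $[1,\infty)$. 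Hence, by (\ref{eqn:petainfty}), the only obstruction to analyticity of $p$ in $\set{\Re z>0}$ is the branch cut of $\sqrt{z^2-\frac14}$ along $[\half,\infty)$, together with the (integrable) branch point at $z=\half$. I would also record that, for the chosen branch, $z\mapsto\sqrt{z^2-\frac14}$ sends the right half-plane into the closed first quadrant (it equals $i\sqrt{s^2+\frac14}$ on the imaginary axis and $+\sqrt{t(t+1)}$ above the cut), so $\Im\sqrt{z^2-\frac14}\ge0$ and therefore $\abs{e^{iA\sqrt{z^2-1/4}}}\le1$ throughout $\set{\Re z\ge0}$ for every $A\ge0$.

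Next I would justify the deformation. Writing $p(z)e^{-\eta z}=\phi(z)\,e^{-(\eta-y)z}$ with $\phi(z)=q(z)e^{-yz}\,e^{iA\sqrt{z^2-1/4}}/\sqrt{z^2-1/4}$, the point is that $q(z)e^{-yz}$ stays bounded in $\set{\Re z\ge0}$: from (\ref{eqn:qetainfty}) and Lemma \ref{lem:asymptoticGammaF} its dominant term $e^{zy}\hF{-\half,-\half-z,1-z;e^{-2y}}$ times $e^{-yz}$ tends to $\sqrt{1-e^{-2y}}$, while along the imaginary axis $q(is)e^{-isy}=a(s)e^{-2isy}f(s,y)+\bar f(s,y)=O(1)$. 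Consequently $\abs{\phi(z)}\le C/\abs{z}\to0$ uniformly as $\abs{z}\to\infty$ in the closed right half-plane, and since $\eta-y\ge2>0$, Jordan's lemma shows that the two connecting quarter-circle arcs contribute nothing in the limit $R\to\infty$; the small circle about $z=\half$ also drops out because $1/\sqrt{z^2-\frac14}$ is integrable there. Thus $I(\eta)$ equals the integral of $p(z)e^{-\eta z}$ over the contour that runs leftward below the cut and rightward above it.

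It remains to evaluate the jump across the cut. I would substitute $t=z-\half$, so that $z^2-\frac14=t(t+1)$ and $e^{-\eta z}=e^{-\eta/2}e^{-\eta t}$, and note that $q$ is single-valued on the cut while $\sqrt{z^2-\frac14}$ equals $+\sqrt{t(t+1)}$ just above it and $-\sqrt{t(t+1)}$ just below it. The two edges then combine (the reversal of orientation below the cut and the sign change of the square root cancel, so the edges add) into
\[
I(\eta)=e^{-\eta/2}\int_0^\infty q(\half+t)\,\frac{e^{iA\sqrt{t(t+1)}}+e^{-iA\sqrt{t(t+1)}}}{\sqrt{t(t+1)}}\,e^{-\eta t}\,dt .
\]
Finally the same substitution turns $q(\half+t)$ into $\sqrt\pi\,e^{-y/2}\sinh^2 y\,\frac{\Gamma(t+2)}{\Gamma(t+\frac12)}\hF{\half-\half t,1+\half t,2;-\sinh^2 y}$, since $\frac34-\half(\half+t)=\half-\half t$ and $\frac34+\half(\half+t)=1+\half t$, and since $e^{iA\sqrt{t(t+1)}}+e^{-iA\sqrt{t(t+1)}}=2\cos(A\sqrt{t(t+1)})$; this is exactly (\ref{eqn:asymIetainfty}).

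I expect the main obstacle to be the uniform bound on $q(z)e^{-yz}$ in the closed right half-plane needed for the Jordan estimate: the two-term representation (\ref{eqn:qetainfty}) is convenient away from the positive real axis, but its individual terms acquire spurious poles at the positive integers (where $q$ itself is regular), so near the real axis I would instead bound $q$ through the manifestly analytic form (\ref{eqn:q1F}) together with the uniform large-parameter asymptotics of $\hF{\frac34-\half z,\frac34+\half z,2;-\sinh^2 y}$. A secondary point is the convergence of the starting integral when $A=0$, where the integrand decays only like $\abs{s}^{-1}$; I would first establish the identity for $A>0$ (where $\abs{e^{iA\sqrt{z^2-1/4}}}\le1$ makes everything absolutely convergent) and then let $A\to0^+$, using dominated convergence in the final integral, whose integrand is bounded by an $A$-independent integrable function (of size $t^{-1/2}$ near the origin and $t^{-1}e^{-(\eta-y)t}$ at infinity).
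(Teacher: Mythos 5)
Your proposal is correct and follows essentially the same route as the paper's proof: deform the vertical contour in (\ref{eqn:Ietap}) across the right half-plane onto the two lips of the branch cut $[\half,\infty)$, show that the large arcs and the small circle at $z=\half$ contribute nothing (using that $q$ is analytic for $\Re z>0$, that $\Im\sqrt{z^2-\frac14}\ge 0$ there, and the large-$z$ asymptotics of $q$ from (\ref{eqn:q1F})), and then combine the two edges, where $\sqrt{z^2-\frac14}=\pm\sqrt{t(t+1)}$, into the cosine after substituting $t=z-\half$. Your packaging of the arc estimate via the uniform bound on $q(z)e^{-yz}$ and Jordan's lemma is just a tidier version of the paper's explicit $e^{-(\eta-y)R\cos\theta}$ estimate, and your remarks about the spurious poles of (\ref{eqn:qetainfty}) at positive integers and about the conditionally convergent case $A=0$ are sensible refinements rather than a different method.
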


\begin{proof}
We consider the contour integral in Figure \ref{fig:contourIeta}. The arc $C_\eps$ has the center $\half$ with radius $\eps > 0$ and $C_1, C_2$ have the center $0$ with radius $R>0$. Since $p(z) e^{-\eta z}$ is analytic in the region bounded by the closed contour $\Omega$, the Cauchy Theorem implies that 
\begin{equation}\label{eqn:ointpeetaz}
\oint_\Omega p(z) e^{-\eta z}dz = 0.
\end{equation}
\begin{center}
\begin{figure}[!htp]
\includegraphics[scale=0.6]{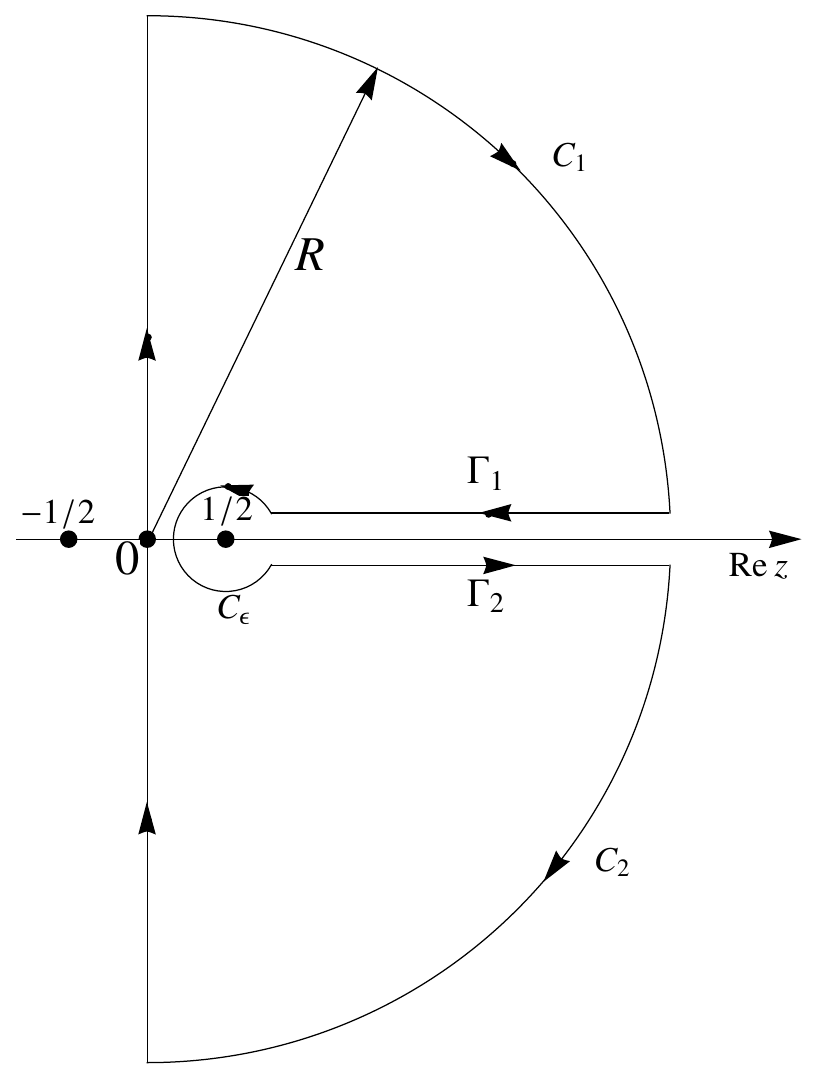}
\caption{The contour $\Omega$ used in the integral $I(\eta)$.}\label{fig:contourIeta}
\end{figure}
\end{center}

In the following we show that the integral $\int p(z) e^{-\eta z}dz$ vanishes on both $C_\eps$ and $C_1 \cup C_2$ as $\eps \To 0$ and $R \To \infty$ respectively. A similar argument as in the proof of Theorem \ref{thm:Greenintegral}  shows that 
\[
\lim_{\eps \To 0}\int_{C_\eps} p(z) e^{-\eta z}dz \To 0.
\]

\begin{claim}
We have
\begin{equation*}
\lim_{R\To \infty} \int_{C_1\cup C_2} p(z) e^{-\eta z}dz = 0.
\end{equation*}
\end{claim}

On the arcs $C_1\cup C_2$ we have $z = R e^{i\theta}$ with $\theta \in (0, \frac \pi 2]$ on $C_1$ and $\theta \in [\frac{3\pi}{2}, 2\pi)$ on $C_2$, see Figure \ref{fig:thetas}. Recall $z- \half = r_1 e^{i\theta_1}$ with $\theta_1\in [0, 2\pi)$ and $z + \half = r_2 e^{i\theta_2}$ with $\theta_2 \in [-\pi, \pi)$. On $C_1$ we have $0 \leq \theta_1 \leq \frac \pi 2 + \delta$ and $0\leq \theta_2< \frac \pi 2$ where $\delta > 0$ is a small number. On $C_2$ we have $\frac 3 2 \pi - \delta \leq \theta_1 \leq 2\pi$ and $-\frac \pi 2 < \theta_2 \leq 0$. So we have
\[
\sqrt{z^2 - \frac 1 4} = \sqrt{r_1 r_2} e^{i \frac{\theta_1 + \theta_2}{2}} = \sqrt{r_1 r_2}\left(\cos \frac{\theta_1 + \theta_2}{2} + i \sin \frac{\theta_1 +\theta_2}{2}\right)
\]
and 
\[
\Re i A \sqrt{z^2 - \frac 1 4} = - A \sqrt{r_1 r_2} \sin \frac{\theta_1 + \theta_2}{2}.
\]
\begin{center}
\begin{figure}[!htp]
\includegraphics[scale=0.7]{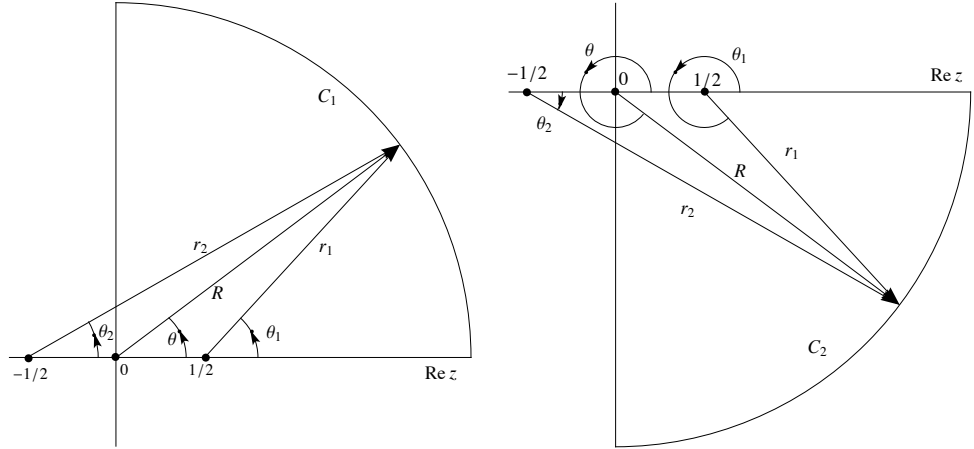}
\caption{Various angles in the contour integral along $C_1\cup C_2$.}\label{fig:thetas}
\end{figure}
\end{center}

Next we consider the asymptotic expansion of $q(z)$ for large $R > 0$. From the formula of $q(z)$ in (\ref{eqn:q1F}) and the asymptotic expansion of hypergeometric function in \cite[Section 15.12]{DLMF} we have
\begin{eqnarray*}
q(z) & = & 2\sqrt{\pi  y}e^{-\frac  y 2} \sqrt{\sinh y} \sqrt{z} \left(1 + O(z^{-1})\right)\left\{I_1(z y)\left(1+ O(z^{-2})\right) + \frac{I_0(z y)}{z}\left(A_1( y) + O(z^{-2})\right)\right\}
\end{eqnarray*}
where $I_\nu$ is the modified Bessel function and
\[
A_1( y) = \frac{3}{8 y} - \frac 3 8 \coth y < 0.
\]  

On the arc $C_1$, we have $\arg z = \theta \in [0, \pi/2]$ and so 
\begin{eqnarray*}
I_0(z y) & = & \frac{e^{z y} + i e^{-z y}}{\sqrt{2\pi  y z}}\left(1+ O(z^{-1})\right), \\
I_1(z y) & = & \frac{e^{z y} - i e^{-z y}}{\sqrt{2\pi  y z}}\left(1+ O(z^{-1}) \right).
\end{eqnarray*}
Since
\begin{eqnarray*}
\abs{e^{z y} \pm i e^{-z y}} \leq \abs{e^{z y}} + \abs{e^{-z y}} = e^{R  y \cos\theta} + e^{-R  y \cos\theta},
\end{eqnarray*}
it follows that
\begin{eqnarray*}
\abs{q(z)} & \leq & {c_1}\left(e^{R  y \cos\theta}+e^{-R  y \cos\theta}\right)
\end{eqnarray*}
for some constant $c_1 = c_1( y)$ when $R$ is large. Note that $\sqrt{r_1 r_2} \approx R$ when $R$ is large, so we have
\begin{eqnarray*}
\abs{p(z) e^{-\eta z} dz} & \leq & {c_1}\left(e^{R y \cos \theta} + e^{-R  y \cos\theta}\right) \frac{c_2 \exp\left(- A \sqrt{r_1 r_2} \sin \frac{\theta_1 + \theta_2}{2}\right)}{R}e^{-\eta R \cos \theta}  Rd\theta \\
& = & {c_1 c_2} \left\{\exp\left(R  y \cos \theta - A \sqrt{r_1 r_2}\sin\frac{\theta_1+\theta_2}{2} - \eta R \cos\theta\right)\right. \\
& & \left. + \exp\left(-R  y \cos \theta - A \sqrt{r_1 r_2}\sin\frac{\theta_1+\theta_2}{2} - \eta R \cos\theta\right)\right\}d\theta \\
& \leq & 2c_1 c_2 e^{-2R\cos \theta}d\theta.
\end{eqnarray*}
Note that $\frac{\theta_1+\theta_2}{2} \in [0, \pi]$ and we have assumed that $\eta -  y \geq 2$.  So we have
\begin{eqnarray*}
\abs{\int_0^{\pi/2} p(z) e^{-\eta z}dz} & \leq & 2c_1 c_2\int_0^{\pi/2} e^{-2R \cos\theta} d\theta \\
& \leq & 2c_1 c_2 \int_0^{\pi/4} e^{-2R\cos\theta} d\theta + 2\sqrt 2 c_1 c_2 \int_{\pi/4}^{\pi/2} e^{-2R \cos\theta} \sin \theta d\theta \\
& \leq & 2c_1 c_2 \int_0^{\pi/4} e^{-\sqrt 2 R }d\theta + \frac{\sqrt 2 c_1 c_2}{R}\int_{\pi/4}^{\pi/2} d e^{-2R \cos\theta} \\
& = & \frac{\pi}{2}c_1 c_2 e^{-\sqrt 2 R} + \frac{\sqrt{2} c_1 c_2}{R}\left(1 - e^{-\sqrt 2 R}\right)
\end{eqnarray*}
and hence 
\[
\lim_{R \To \infty} \int_{C_1} p(z) e^{-\eta z}dz = 0.
\]
On the arc $C_2$ we have different signs in the formulas of $I_0(z y)$ and $I_1(z y)$. The argument above carries through and the limit on $C_2$ also vanishes. This finishes the proof of the claim.

Thus, (\ref{eqn:Ietap}) and (\ref{eqn:ointpeetaz}) imply that
\[
I(\eta) = \int_{-\Gamma_1 \cup -\Gamma_2} p(z) e^{-\eta z} dz.
\]
Note that when $z = u \geq \half$ we have 
\[
\begin{array}{rcl}
\sqrt{z^2 - \frac 1 4} =\sqrt{u^2 - \frac 1 4} & \text{on} & \Gamma_1 \quad \text{and} \\
\sqrt{z^2 - \frac1 4} = -\sqrt{u^2 - \frac 1 4} & \text{on} & \Gamma_2. 
\end{array}
\]
It follows that 
\begin{eqnarray*}
I(\eta) & = & \sqrt{\pi}e^{-\frac  y 2}\sinh^2 y \int_{\half}^\infty \left\{ \frac{ \Gamma(3/2 + u)}{\Gamma(u)}\hF{\frac 3 4 - \half u, \frac 3 4 + \half u, 2; - \sinh^2 y}\right\} \\
& & \times \frac{e^{-\eta u}}{\sqrt{u^2 -\frac 1 4}}\left(e^{i A \sqrt{u^2 - \frac 1 4}} + e^{-i A \sqrt{u^2 - \frac 1 4}}\right) du \\
& = & 2\sqrt{\pi}e^{-\frac  y 2}\sinh^2 y \int_{\half}^\infty \left\{ \frac{\Gamma(3/2 + u)}{\Gamma(u)}\hF{\frac 3 4 - \half u, \frac 3 4 + \half u, 2; - \sinh^2 y}\right\} \frac{\cos\left(A \sqrt{u^2 - \frac 1 4}\right)}{\sqrt{u^2 -\frac 1 4}} e^{-\eta u}du \\
& = & 2 e^{-\half \eta }\sqrt{\pi}e^{-\frac  y 2}\sinh^2 y \int_{0}^\infty \left\{ \frac{\Gamma(2 + t)}{\Gamma(1/2+t)}\hF{\frac 1 2 - \half t, 1 + \half t, 2; - \sinh^2 y}\right\} \\
& & \times \frac{\cos\left(A \sqrt{t(t+1)}\right)}{\sqrt{t(t+1)}} e^{-\eta t}dt.
\end{eqnarray*}
This finish the proof of Proposition \ref{prop:Ietarealintegral}. 
\end{proof}

\begin{proposition}\label{prop:asymIetainfty}
For fixed $A= \abs{x-\xi}$ and $\eta>0$ large we have the asymptotic formula
\begin{equation}
I(\eta) \sim 2\sqrt{\pi} e^{-\frac 3 2 y}(e^{y}-1)^2 \eta^{-\half} e^{-\half \eta}\left(1 + O(\eta^{-1})\right)
\end{equation}
as $\eta \To \infty$
\end{proposition}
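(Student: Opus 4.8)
The plan is to apply Watson's lemma to the real Laplace-type integral furnished by Proposition \ref{prop:Ietarealintegral}. Writing
\[
I(\eta) = 2\sqrt{\pi}\, e^{-\frac y 2}\sinh^2 y \; e^{-\frac \eta 2}\int_0^\infty g(t)\, e^{-\eta t}\, dt, \qquad
g(t) = \frac{\Gamma(2+t)}{\Gamma(1/2+t)}\,\hF{\half - \half t, 1 + \half t, 2; -\sinh^2 y}\,\frac{\cos\left(A\sqrt{t(t+1)}\right)}{\sqrt{t(t+1)}},
\]
the large-$\eta$ behavior is controlled by $g$ near $t=0$. First I would observe that, apart from the factor $\left(t(t+1)\right)^{-1/2} = t^{-1/2}(1+t)^{-1/2}$, each factor of $g$ is analytic in $t$ at the origin: the ratio of Gamma functions is analytic, the hypergeometric function depends analytically on its parameters, and $\cos\left(A\sqrt{t(t+1)}\right)$ is an even power series in $\sqrt{t}$, hence analytic in $t$. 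Therefore $g(t) = t^{-1/2} h(t)$ with $h$ analytic near $0$, and $g$ admits an asymptotic expansion $g(t)\sim \sum_{k\ge 0} h_k\, t^{k-1/2}$ involving only half-integer powers (so no logarithmic terms appear).

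The leading coefficient is
\[
h_0 = \lim_{t\To 0} t^{1/2} g(t) = \frac{\Gamma(2)}{\Gamma(1/2)}\,\hF{\half, 1, 2; -\sinh^2 y} = \frac{1}{\sqrt{\pi}}\,\hF{\half, 1, 2; -\sinh^2 y},
\]
since $\cos 0 = 1$ independently of $A$. Watson's lemma then gives
\[
\int_0^\infty g(t)\, e^{-\eta t}\, dt = h_0\,\Gamma(\tfrac12)\,\eta^{-\frac 1 2} + O\!\left(\eta^{-\frac 3 2}\right) = \hF{\half, 1, 2; -\sinh^2 y}\,\eta^{-\frac 1 2}\left(1 + O(\eta^{-1})\right),
\]
where the next coefficient $h_1\Gamma(3/2)\eta^{-3/2}$ produces the relative $O(\eta^{-1})$ error. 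To reach the closed form I would insert the elementary evaluation $\hF{\half, 1, 2; z} = \frac{2\left(1 - \sqrt{1-z}\right)}{z}$ (valid by analytic continuation for all $z<1$, in particular for $z=-\sinh^2 y$, where $1-z=\cosh^2 y$ so $\sqrt{1-z}=\cosh y$), which yields
\[
\sinh^2 y\,\hF{\half, 1, 2; -\sinh^2 y} = 2(\cosh y - 1) = e^{-y}(e^y - 1)^2.
\]
Multiplying back the prefactor $2\sqrt{\pi}\,e^{-y/2}\sinh^2 y\, e^{-\eta/2}$ collapses the expression to $2\sqrt{\pi}\,e^{-3y/2}(e^y-1)^2\,\eta^{-1/2}e^{-\eta/2}\left(1 + O(\eta^{-1})\right)$, which is exactly the asserted asymptotics.

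The main obstacle is the rigorous justification of Watson's lemma, which demands controlling $g(t)$ as $t\To\infty$ so that the Laplace integral converges and the term-by-term expansion is legitimate. For this I would reuse the large-parameter asymptotics of the hypergeometric factor already established in the proof of Proposition \ref{prop:Ietarealintegral} via the modified Bessel functions $I_0, I_1$ (see \cite[Section 15.12]{DLMF}), which show that $g(t)$ grows at most like $e^{t y}$ as $t\To\infty$. Since we have arranged $\eta - y \ge 2$, the factor $e^{-\eta t}$ dominates this growth and renders $g(t)\,e^{-\eta t}$ integrable, so the hypotheses of Watson's lemma are met and the expansion above holds for all sufficiently large $\eta$. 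This completes the proof of Proposition \ref{prop:asymIetainfty}.
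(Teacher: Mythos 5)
Your proposal is correct and follows essentially the same route as the paper: both apply Watson's lemma to the Laplace-type integral from Proposition \ref{prop:Ietarealintegral}, extract the leading coefficient from the $t^{-1/2}$ singularity at the endpoint, and justify the lemma's hypotheses via the $e^{ty}$ growth bound already obtained from the modified Bessel asymptotics. The only cosmetic difference is that you make the endpoint coefficient explicit through the closed form $\hF{\half,1,2;z} = 2\left(1-\sqrt{1-z}\right)/z$, whereas the paper simply records the resulting Taylor coefficient $e^{-\frac{3}{2}y}(e^y-1)^2$ directly.
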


\begin{proof}
From Proposition \ref{prop:Ietarealintegral} and the formula of $q(z)$ in (\ref{eqn:q1F}) we have
\[
I(\eta) = 2\int_{\half}^\infty q(t) \frac{\cos\left(A\sqrt{t^2-\frac 1 4}\right)}{\sqrt{t^2 - \frac 1 4}}e^{-\eta t}dt
\]
At $t = \frac 1 2$, we have the following Taylor series
\[
q(t)\frac{\cos\left(A\sqrt{t^2-\frac 1 4}\right)}{\sqrt{t^2 - \frac 1 4}} =  e^{-\frac 3 2 y}(e^y -1)^2 (t-1/2)^{-\half} + O\left((t-1/2)^\half\right).
\]
To apply Watson's Lemma, see for example \cite{Wongbook}, we only need to show that there exist constants $M_1=
M_1(y)>0$ and $M_2=M_2(y) > 0$ independent of $t$ such that 
\[
\abs{q(t)\frac{\cos\left(A\sqrt{t^2 - \frac 1 4}\right)}{\sqrt{t^2 - \frac 1 4}}} \leq M_1 e^{M_2 t} \quad \text{for large } t> 0.
\]
Since $t\geq \half$, we have $\arg t = 0$ and the asymptotic expansion of the modified Bessel function has a simpler form, i.e., the $e^{-z\xi}$ term does not appear. So for large $t$ we have
\begin{eqnarray*}
q(t) & = & 2\sqrt{\pi  y} e^{-\frac  y 2} \sqrt{\sinh y} \sqrt{t} \left(1+ O(t^{-1})\right) \frac{e^{t  y}}{\sqrt{2\pi  y t}}\left(1+ O(t^{-1})\right) \\
& = & \sqrt 2 e^{-\frac  y 2} \sqrt{\sinh  y}e^{t y}\left(1+ O(t^{-1})\right).
\end{eqnarray*}
For example, we can take $M_1 = \sqrt 2 e^{-\frac  y 2}\sqrt{\sinh y}$ and $M_2 =  y$.

Now Watson's Lemma yields
\[
\int_{\half}^\infty q(t) \frac{\cos\left(A\sqrt{t^2-\frac 1 4}\right)}{\sqrt{t^2 - \frac 1 4}}e^{-\eta t}dt \sim \left(e^{-\frac 3 2 y}(e^ y -1)^2 \Gamma(1/2) \eta^{-\half} + O(\eta^{-\frac 3 2})\right) e^{-\half \eta}
\]
which gives the desired asymptotic formula of $I(\eta)$.
\end{proof}

From the asymptotic expansion of $I(\eta)$ in Proposition \ref{prop:asymIetainfty} we show Theorem \ref{thm:asymGetainfty}.

\begin{proof}[Proof of Theorem \ref{thm:asymGetainfty}]
Since 
\[
k(s, y, \eta) = \left(a(s) e^{-i s y} f(s, y) + e^{is  y}\bar f (s, y)\right)e^{-i s\eta} f(s,\eta)
\]
and $f(s,\eta) = 1 + O(e^{-2\eta})$ as $\eta \To \infty$, we have
\[
\int_{-\infty}^\infty k(s,y, \eta) \frac{\exp\left(-\abs{x-\xi}\sqrt{s^2 + \frac 1 4}\right)}{\sqrt{s^2 + \frac 1 4}} ds = I(\eta) + O(e^{-2\eta}).
\]
Using the asymptotic expansion of $I(\eta)$ in Proposition \ref{prop:asymIetainfty}, we have
\begin{eqnarray*}
G(x, y, \xi, \eta) & = & \frac{e^{y + \eta}}{\pi \sqrt{(e^{2y}-1)(e^{2\eta}-1)}}\left( I(\eta) + O(e^{-2\eta})\right) \\
& = & \frac{2}{\sqrt \pi}\frac{(e^y -1)^{\frac 3 2}}{e^{\frac y 2}(e^y +1)^\half} \frac{e^{\half \eta}}{\eta^\half (e^{2\eta}-1)^\half}\left(1 + O(e^{-\frac 3 2\eta})\right). 
\end{eqnarray*}
\end{proof}

\subsection{Asymptotic expansions along the rays $\eta = m\abs{\xi}$}

\begin{thm}\label{thm:asymGreenrays}
For each $m>0$, the Green function has the following asymptotic expansion along the ray $\eta = m\abs{\xi}$ with $\abs{\xi}\To \infty$:
\begin{eqnarray}
G(x,y,\xi, \eta) & = & \frac{\sqrt{m}\Gamma\left(\frac{3}{2} + \frac{m}{2\sqrt{m^2 +1}}\right) \sinh^{\frac 3 2}(y)}{8(m^2+1)^{\frac 3 4}\Gamma\left(1+\frac{m}{2\sqrt{m^2 +1}}\right)} \hF{\frac 3 4- \frac{m}{4\sqrt{m^2 + 1}}, \frac 3 4+ \frac{m}{4\sqrt{m^2 + 1}},2; -\sinh^2 y} \nonumber \\
& & \times \exp\left(\frac{\text{sgn}(\xi) x}{2\sqrt{m^2 + 1}}\right) e^{-\frac{\sqrt{m^2 + 1}}{2m}\eta}\left(\eta^{-1} + O\left(\eta^{-\half}\right)\right). \nonumber
\end{eqnarray}
\end{thm}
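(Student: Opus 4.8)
The plan is to treat the statement as a steepest-descent (saddle-point) problem for the two-sided contour representation of Theorem \ref{thm:Greenintegral},
\[
G(x,y,\xi,\eta) = \frac{e^{y+\eta}}{\pi\sqrt{(e^{2y}-1)(e^{2\eta}-1)}}\int_{-\infty}^{\infty} k(s,y,\eta)\,\frac{e^{-\abs{x-\xi}\sqrt{s^2+\frac14}}}{\sqrt{s^2+\frac14}}\,ds.
\]
First I would insert the closed form of $k$ from Lemma \ref{lem:ksyeta}, which factors the $y$-dependence as $\frac{\Gamma(3/2+is)}{\Gamma(is)}\sqrt\pi\,e^{-y/2}\sinh^2 y\,\hF{\frac34-\half is,\frac34+\half is,2;-\sinh^2 y}$ times $e^{-is\eta}f(s,\eta)$, and use $f(s,\eta)=1+O(e^{-2\eta})$ so the amplitude is $\eta$-independent to all algebraic orders. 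Imposing the ray relation $\abs{x-\xi}=\frac{\eta}{m}-\mathrm{sgn}(\xi)\,x$ collapses the two exponentials into a single large-parameter factor $e^{-\eta\psi(s)}$ with
\[
\psi(s)=is+\frac1m\sqrt{s^2+\tfrac14},
\]
multiplied by the bounded amplitude $e^{\mathrm{sgn}(\xi)\,x\sqrt{s^2+1/4}}$ and the smooth $y$-factor above.

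Next I would locate the saddle by solving $\psi'(s)=i+\frac1m\frac{s}{\sqrt{s^2+1/4}}=0$, giving $s_0=-\frac{im}{2\sqrt{m^2+1}}$; then $\sqrt{s_0^2+\frac14}=\frac{1}{2\sqrt{m^2+1}}$, so that $\psi(s_0)=\frac{\sqrt{m^2+1}}{2m}$, which is precisely the decay rate $e^{-\frac{\sqrt{m^2+1}}{2m}\eta}$ in the statement, and $\psi''(s_0)=\frac{2(m^2+1)^{3/2}}{m}>0$. The key structural point is that $is_0=\frac{m}{2\sqrt{m^2+1}}$ is \emph{real}: evaluating the amplitude at $s_0$ turns the gamma ratio into $\frac{\Gamma(3/2+\frac{m}{2\sqrt{m^2+1}})}{\Gamma(is_0)}$ and the hypergeometric factor into $\hF{\frac34-\frac{m}{4\sqrt{m^2+1}},\frac34+\frac{m}{4\sqrt{m^2+1}},2;-\sinh^2 y}$, matching the stated kernel, while $e^{\mathrm{sgn}(\xi)\,x/(2\sqrt{m^2+1})}$ comes from the bounded exponential at $s_0$.

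I would then justify shifting the contour from the real axis down to the horizontal steepest-descent line $\Im s=-\frac{m}{2\sqrt{m^2+1}}$ through $s_0$ by Cauchy's theorem, exactly as in the proofs of Theorem \ref{thm:Greenintegral} and Proposition \ref{prop:Ietarealintegral}: the poles of $\Gamma(3/2+is)$ sit on the positive imaginary axis, the branch points of $\sqrt{s^2+\frac14}$ at $\pm\frac i2$ lie outside the strip $-\frac{m}{2\sqrt{m^2+1}}\le\Im s\le0$ (since $\frac{m}{2\sqrt{m^2+1}}<\half$), and the connecting arcs at $\Re s\To\pm\infty$ vanish because $\Re\psi\To+\infty$ there. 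On the shifted line the standard Laplace/Watson estimate gives the leading contribution $h(s_0)\,e^{-\eta\psi(s_0)}\sqrt{2\pi/(\eta\,\psi''(s_0))}$. Collecting this with the prefactor and simplifying via $e^{2y}-1=2e^y\sinh y$, so that $\frac{e^{y}}{\sqrt{e^{2y}-1}}\cdot e^{-y/2}\sinh^2 y=\frac{\sinh^{3/2}y}{\sqrt2}$, together with $\frac{e^\eta}{\sqrt{e^{2\eta}-1}}\To1$, assembles all the remaining constants and the stated power of $\eta$.

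The main obstacle will be the rigorous contour estimates rather than the formal saddle computation: one must bound the growth of the amplitude $q(z)$ on the large arcs via its modified-Bessel asymptotics (as in Proposition \ref{prop:Ietarealintegral}) together with the gamma and hypergeometric bounds of Lemma \ref{lem:asymptoticGammaF}, and check that these are dominated both by the Gaussian decay off the saddle and by the exponential decay along the connecting arcs, so that only the saddle survives. A secondary delicate point is uniform error control down to the line through $s_0$, and in particular the behavior as $m$ grows: the saddle $s_0$ migrates toward the branch point $-\frac i2$, but for each fixed $m$ it stays a positive distance away, so the saddle is nondegenerate and the single-saddle steepest-descent expansion applies.
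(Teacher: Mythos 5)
Your proposal is correct and is essentially the paper's own argument: the authors likewise apply the saddle-point method to the Fourier--Laplace representation with amplitude given by Lemma \ref{lem:ksyeta}, find the same saddle $z_0=-\tfrac{im}{2\sqrt{m^2+1}}$ with the same decay rate $\tfrac{\sqrt{m^2+1}}{2m}\eta$, and justify the deformation by Cauchy's theorem with vanishing connecting arcs; the only cosmetic differences are that they work with the one-sided integral in $\xi$ (taking real parts at the end, so the imaginary-axis segment of the contour drops out) and deform onto the steepest-descent hyperbola rather than your horizontal line through the saddle, both of which yield the same Watson-type leading term. Note also that your leading power $\eta^{-1/2}$ agrees with what the paper's own proof produces ($\xi^{-1/2}$), rather than with the internally inconsistent factor $\left(\eta^{-1}+O\left(\eta^{-1/2}\right)\right)$ printed in the theorem statement.
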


\begin{proof}
We first assume that $\xi > 0$ so that $\abs{\xi - x} = \xi -x$ for large $\xi$. From Lemma \ref{lem:ksyeta}, 
\[
k(s,y, \eta) = \sqrt \pi e^{-\frac  y 2}\sinh^2 ( y) \hF{\frac 3 4 - \half i s, \frac 3 4 + \half i s,2 ; - \sinh^2  y}\frac{\Gamma(3/2 + is)}{\Gamma(i s)} e^{-i s \eta}f(s,\eta). 
\]
We consider the real part of the following integral
\[
I(\xi) = \sqrt \pi \int_0^\infty e^{-\frac  y 2}\sinh^2 ( y) \hF{\frac 3 4 - \half i s, \frac 3 4 + \half i s,2 ; - \sinh^2  y}\frac{\Gamma(3/2 + is)}{\Gamma(i s)} e^{-i s \eta}\frac{e^{-\left(\xi-x\right)}\sqrt{s^2 + \frac 1 4}}{\sqrt{s^2 + \frac 1 4}}ds.
\]

Let 
\[
g( x, y, z) = \sqrt \pi e^{-\frac  y 2} \sinh^2( y) \hF{\frac 3 4 - \half i z, \frac 3 4+ \half i z,2; - \sinh^2 y} \frac{\Gamma(3/2+ i z)e^{x \sqrt{z^2 + \frac 1 4}}}{\Gamma(i z)\sqrt{z^2 + \frac 1 4}}
\]
and 
\[
\phi(z) = - \left(i m z + \sqrt{z^2 + \frac 14}\right)
\]
for $z\in \Cpx$. So the integral can be written as
\[
I(\xi) = \int_C g(x, y, z) e^{\xi \phi(z)} dz,
\]
where $C$ denotes the positive real axis. 

\begin{claim}
For $\xi \To \infty$, we have the  asymptotic expansion
\begin{eqnarray*}
\Re I(\xi) & = & e^{-\frac{\sqrt{m^2 +1}}{2}\xi}\left\{\frac{m\pi \Gamma\left(\frac{3}{2} + \frac{m}{2\sqrt{m^2 +1}}\right)}{2(m^2+1)^{\frac 3 4}\Gamma\left(1+\frac{m}{2\sqrt{m^2 +1}}\right)} \exp\left(\frac{x}{2\sqrt{m^2+1}}\right) \right. \\
& & \left. \times e^{-\frac y 2}\sinh^2(y)\hF{\frac 3 4- \frac{m}{4\sqrt{m^2 + 1}}, \frac 3 4+ \frac{m}{4\sqrt{m^2 + 1}},2; -\sinh^2 y} \xi^{-\half}+ O(1)\right\}
\end{eqnarray*}
\end{claim}

We show the above asymptotic expansion by the method of the steepest descent, see for example \cite[II.4]{Wongbook}. Write $z = u + i v$ and let $z_0 = - i v_0$ with
\[
v_0 =  \frac{m}{2\sqrt{m^2 + 1}}.
\]
Let $C_2$ be half of the hyperbola defined by the equation
\[
v = - \frac{m\sqrt{1+4(m^2+1)u^2}}{2\sqrt{1+m^2}} \quad \text{with} \quad u\geq 0.
\]
Denote $C_1$ the line segment from $z=0$ to $z_0$ on the $\Im z$-axis and $C_R$ the circular arc centered at the origin with radius $R$ connecting $C_2$ and the positive $\Re z$-axis, see Figure \ref{fig:contourhyperbola}.
\begin{center}
\begin{figure}[!htp]
\includegraphics[scale=0.8]{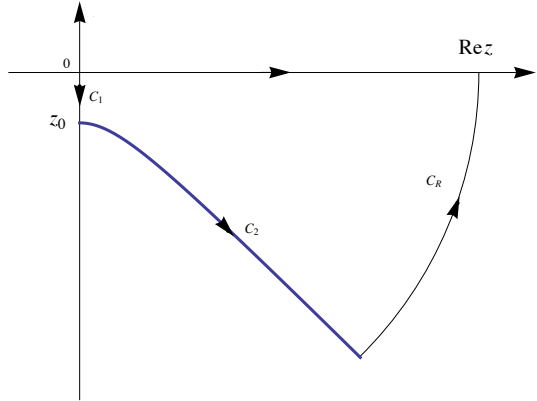}
\caption{Steepest descent path of the integral $I(\xi)$}\label{fig:contourhyperbola}
\end{figure}
\end{center}
Since $g(x, y, z) e^{\xi \phi(z)}$ is analytic in the region bounded by $\Re z$-axis, $C_1$, $C_2$ and $C_R$, the Cauchy Theorem implies that 
\[
I(\xi) = \lim_{R \To \infty} \int_{C_1 \cup C_2 \cup C_R} g(x,y,z)e^{\xi \phi(z)}dz.
\]
It can be shown that for any fixed $\xi > 0$ the integral along $C_R$ vanishes as $R \To \infty$. So we have
\[
I(\xi) = \int_{C_1} g(x, y, z) e^{\xi \phi(z)}dz + \int_{C_2}g(x, y, z) e^{\xi \phi(z)}dz.
\]
The integral along $C_1$ can be written as
\begin{eqnarray*}
I_1(\xi) & = & \int_{C_1} g(x, y, z) e^{\xi \phi(z)} dz \\
& = & - i \int_0^{v_0} g(x,y, -i v) e^{\xi \phi(-i v)}dv
\end{eqnarray*}
Note that $g(x, y, -i v)$ is a real-valued function and $\phi(-i v)$ is also a real-valued function since $v_0< \half$. So we have $\Re I_1(\xi) = 0$ for any $\xi > 0$. Next we consider the integral along $C_2$. We have
\[
z^2 + \frac1 4 = \frac{1}{4(m^2+1)} -(m^2 -1)u^2 - i \frac{m u \sqrt{1+4(m^2+1)u^2}}{\sqrt{m^2+1}}
\]
and
\[
\abs{z^2 + \frac 1 4} = \frac{1+ 4(m^2 +1)^2 u^2}{4(m^2 + 1)}.
\]
From the formula of the principal square root of a complex number 
\[
\sqrt{x+ i y} = \frac{1}{\sqrt 2}\left(\sqrt{\abs{x+iy}+ x}+ i \text{sgn}(y) \sqrt{\abs{x+i y} -x} \right),
\]
we have
\[
\sqrt{z^2 + \frac 1 4} = \frac{\sqrt{1+4(m^2+1)u^2}}{2\sqrt{m^2 + 1}} - i m u
\]
and
\[
\phi(z) =  - \frac{\sqrt{m^2 + 1}\sqrt{1+4(m^2 +1)u^2}}{2}.
\]
Along the curve $C_2$, let
\[
\tau = \phi(z_0) - \phi(z) = -\frac{\sqrt{m^2 + 1}}{2} - \phi(z)
\]
and we have $\tau \in [0, \infty)$. Around $z = z_0$ we have the following expansion
\begin{eqnarray*}
\phi(z) & = & \phi(z_0) - (m^2 + 1)^{\frac 3 2}(z-z_0)^2 + O\left((z-z_0)^3\right), \\
g(x, y,z) & = & b_0 + O(z-z_0)
\end{eqnarray*}
with
\begin{eqnarray*}
b_0 & = & \frac{m\sqrt \pi \Gamma\left(\frac{3}{2} + \frac{m}{2\sqrt{m^2 +1}}\right)}{\Gamma\left(1+\frac{m}{2\sqrt{m^2 +1}}\right)} \exp\left(\frac{x}{2\sqrt{m^2+1}}\right) \\
& & \times e^{-\frac y 2}\sinh^2(y)\hF{\frac 3 4- \frac{m}{4\sqrt{m^2 + 1}}, \frac 3 4+ \frac{m}{4\sqrt{m^2 + 1}},2; -\sinh^2 y}.
\end{eqnarray*}
It follows that 
\[
g(z) \frac{dz}{d\tau} = \frac{b_0}{2(m^2 +1)^{\frac 3 4}}\tau^{-\half} + O(1)
\]
and so we have the following asymptotic expansion as $\eta \To \infty$:
\begin{eqnarray*}
\Re I(\xi) = I_2(\xi) & = & e^{-\frac{\sqrt{m^2+1}}{2}\xi}\left(\frac{b_0}{2(m^2+1)^{\frac 3 4}}\Gamma(1/2) \xi^{-\half} + O(1)\right) \\
& = & e^{-\frac{\sqrt{m^2+1}}{2}\xi}\left(\frac{\sqrt{\pi} b_0}{2(m^2+1)^{\frac 3 4}} \xi^{-\half} + O(1)\right).
\end{eqnarray*}
This finishes the proof of the claim. 

The asymptotic expansion of the Green function follows from the one of $\Re I(\xi)$ for $\xi > 0$.  When $\xi < 0$ we have a similar expansion, except the factor $\exp\left(\frac{x}{2\sqrt{m^2 + 1}}\right)$ is replaced by 
\[
\exp\left(-\frac{x}{2\sqrt{m^2 + 1}}\right).
\]
So we have finished the proof of Theorem \ref{thm:asymGreenrays}. 
\end{proof}

\medskip{}

\section{The Martin kernel and Martin boundary}

In this section, we first determine the Martin compactification $\hat \Sigma$ of $\Sigma$ with respect to the operator $\calL = \Delta -1$, see Theorem \ref{thm:MartincompSigma}. Then we prove Theorem \ref{thm:uniquenessWpos} in the introduction at the end of the section. 

\smallskip

Fix a reference point $(x_0, y_0)\in \Sigma$, then the Martin kernel is given by
\begin{equation*}
K(x, y, \xi, \eta) =
\left\{
\begin{array}{cl} 
1, & \text{if }  (x, y)=(\xi, \eta)=(x_0, y_0) \\
& \\
\dfrac{G(x, y, \xi, \eta)}{G(x_0, y_0, \xi, \eta)}, & \text{otherwise}.
\end{array}
\right.
\end{equation*}
From the asymptotic expansion of the Green function along various paths in the previous section we can determine Martin kernel functions of such cases. 

For any $\xi \in \Real$, denote $\omega_{\xi} = (\xi, 0)$ on the boundary $\set{\eta = 0}$, and $\omega_\infty = \lim_{\eta \To \infty}(\xi_0, \eta)$ while fixing $\xi_0 > 0$. Recall the short notation of Gauss hypergeometric function
\[
f(s,y) = \hF{-\half, -\half + i s, 1+i s; e^{-2 y}}.
\]
Using Theorems \ref{thm:asymGeta0}, \ref{thm:asymGetainfty} and \ref{thm:asymGreenrays}, we  immediately obtain 
\begin{proposition}\label{prop:Martinkernel}
The limits of the Martin kernels as $(\xi, \eta)$ diverges to infinity are given by the following cases. 
\begin{enumerate}[(a)]
\item For any $\xi \in \Real$, when $\eta \To 0$ the Martin kernel is given by 
\begin{equation}
K(x, y, \omega_{\xi}) = \frac{\sqrt{1-e^{-2y_0}}}{\sqrt{1-e^{-2y}}} \frac{\int_{-\infty}^\infty \frac{\Gamma(3/2+ is)}{\Gamma(i s)}e^{- is y}f(s,y) \frac{1}{\sqrt{\frac 1 4 + s^2}}e^{-\abs{x-\xi}\sqrt{s^2 + \frac 1 4}}ds}{\int_{-\infty}^\infty \frac{\Gamma(3/2+ is)}{\Gamma(i s)}e^{- is y_0}f(s,y_0) \frac{1}{\sqrt{\frac 1 4 + s^2}}e^{-\abs{x_0-\xi}\sqrt{s^2 + \frac 1 4}}ds}
\end{equation}
with $x\ne \xi$.

\item When $\xi>0$ is fixed and $\eta \To \infty$, the Martin kernel is given by
\begin{equation}
K(x, y, \omega_\infty) = A(y_0)\frac{\left(e^{y}-1\right)^2}{e^{\frac y 2}\sqrt{e^{2 y}-1}},
\end{equation}
where $A(y_0)>0$ is a constant such that $K(x_0, y_0, \omega_\infty) = 1$.

\item Along the ray $\eta = \xi \tan\theta$ with $\theta\in (0, \pi/2)\cup (\pi/2, \pi)$, when $\eta\To \infty$ the Martin kernel is given by 
\begin{equation}
K(x, y, \theta) = A(x_0, y_0,\theta) \sinh^{\frac 3 2}(y)\hF{\frac 3 4 - \frac{\sin\theta}{4}, \frac 3 4 + \frac{\sin\theta}{4},2;-\sinh^2 y} \exp\left(\frac{\cos\theta}{2}x\right),
\end{equation}
where $A(x_0,y_0, \theta)$ is a constant such that $K(x_0,y_0,\theta) = 1$. 
\end{enumerate}
\end{proposition}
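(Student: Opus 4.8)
The plan is to read off all three kernels directly from the asymptotic expansions of the Green function established in the previous section, since the Martin kernel is nothing but the ratio $K(x,y,\xi,\eta)=G(x,y,\xi,\eta)/G(x_0,y_0,\xi,\eta)$. The guiding observation is that, in each of Theorems \ref{thm:asymGeta0}, \ref{thm:asymGetainfty} and \ref{thm:asymGreenrays}, the leading term splits off a multiplicative factor that depends on the diverging variable $\eta$ (and on any overall constant fixed by the approach parameter) but not on the evaluation point $(x,y)$. This factor is common to numerator and denominator and therefore cancels in the ratio, while the error terms are of strictly smaller order and disappear in the limit; what remains is the quotient of the $(x,y)$-dependent pieces, with the normalization $K(x_0,y_0,\cdot)=1$ fixing the residual multiplicative constant. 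First I would record that, by Harnack's principle, these pointwise limits are automatically limits in the Martin topology and are independent of the sequence chosen to approach a given boundary point, so that $\omega_\xi$, $\omega_\infty$ and $\theta$ are well defined.

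For case (a) I would substitute the expansion of Theorem \ref{thm:asymGeta0}: the common factor $\eta^{3/2}/\sqrt{2\pi}$ cancels in the ratio, the remainder $O(\eta^{5/2})$ contributes $O(\eta)\To 0$, and the only surviving prefactor is $1/\sqrt{1-e^{-2y}}$ in the numerator against $1/\sqrt{1-e^{-2y_0}}$ in the denominator, producing the stated constant $\sqrt{1-e^{-2y_0}}/\sqrt{1-e^{-2y}}$ in front of the quotient of the two spectral integrals (both of which retain their dependence on the boundary parameter $\xi$). Here I would note the standing restriction $x\ne\xi$, and likewise $x_0\ne\xi$ for the reference point, since the leading integral diverges on the diagonal $\abs{x-\xi}=0$; the value of $K(\cdot,\omega_\xi)$ on this line is then recovered by continuity and harmonicity. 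For case (b) I would use Theorem \ref{thm:asymGetainfty}, where the entire $\eta$-dependent factor $e^{\eta/2}/(\eta^{1/2}(e^{2\eta}-1)^{1/2})$ is independent of both $(x,y)$ and $\xi$, hence cancels outright; what is left is $(e^y-1)^2/(e^{y/2}\sqrt{e^{2y}-1})$ up to the normalizing constant $A(y_0)$, and I would remark that this kernel depends on neither $x$ nor $\xi$.

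The only step requiring genuine care is case (c), and I expect the bookkeeping there to be the main (if modest) obstacle. Theorem \ref{thm:asymGreenrays} is phrased in terms of $m$ with $\eta=m\abs{\xi}$, so I would set $m=\abs{\tan\theta}$ and translate its constants into $\theta$, treating the two ranges separately: for $\theta\in(0,\pi/2)$ one has $\xi>0$, $m=\tan\theta$ and $\sqrt{m^2+1}=\sec\theta$, while for $\theta\in(\pi/2,\pi)$ one has $\xi<0$, $m=-\tan\theta$ and $\sqrt{m^2+1}=\abs{\sec\theta}$. In both ranges the identity $m/\sqrt{m^2+1}=\sin\theta$ turns the hypergeometric parameters into $\frac34\mp\frac{\sin\theta}{4}$, and the crucial check is that the sign flip of $\xi$ compensates the sign of $\cos\theta$, so that $\mathrm{sgn}(\xi)\,x/(2\sqrt{m^2+1})=\tfrac{\cos\theta}{2}x$ uniformly in both quadrants. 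Since the factor $e^{-\frac{\sqrt{m^2+1}}{2m}\eta}(\eta^{-1}+O(\eta^{-1/2}))$ and every constant depending only on $m$ are shared by numerator and denominator, they cancel, leaving $A(x_0,y_0,\theta)\sinh^{3/2}(y)\,\hF{\frac34-\frac{\sin\theta}{4},\frac34+\frac{\sin\theta}{4},2;-\sinh^2 y}\exp(\tfrac{\cos\theta}{2}x)$, as claimed. Throughout case (c) I would also confirm that the $O(\eta^{-1/2})$ remainders are genuinely negligible against the leading $\eta^{-1}$ behavior, so that the ratio converges.
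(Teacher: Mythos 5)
Your proposal is correct and follows exactly the route the paper takes: the paper's "proof" of this proposition is simply the remark that it follows immediately from Theorems \ref{thm:asymGeta0}, \ref{thm:asymGetainfty} and \ref{thm:asymGreenrays} by forming the ratio $G(x,y,\xi,\eta)/G(x_0,y_0,\xi,\eta)$ and cancelling the common $\eta$-dependent factors, which is precisely what you carry out. Your extra bookkeeping in case (c) (the substitution $m=\abs{\tan\theta}$, the identity $m/\sqrt{m^2+1}=\sin\theta$, and the check that $\mathrm{sgn}(\xi)$ compensates the sign of $\cos\theta$) correctly fills in the translation the paper leaves implicit.
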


\begin{remark}
The function $K(0, y, \theta)$ is the unique (non-negative) solution, up to a constant multiple, to the differential equation 
\[
- w''(y) + P(y) w(y) = \lambda w(y) \quad \text{and} \quad w(0) = 0
\]
with $\lambda = \frac{1}{4}\cos^2\theta \in [0, \frac{1}{4}]$. It grows like $e^{\frac{\sin\theta}{2} y}$ as $y \To \infty$ if $\theta\ne 0$ or $\pi$. When $\theta=0$ and $\pi$, the function $K(0, y, 0)=K(0, y, \pi)$ grows like $\log\left(\cosh y \right)$ as $y \To \infty$.
\end{remark}

\begin{lemma}\label{lem:Kxixomegalim}
For any fixed $(x, y)\in \Sigma$, we have the following limits.
\begin{eqnarray*}
\lim_{\theta \To \frac\pi 2 }K(x, y, \theta) & = & K(x, y, \omega_\infty) \\
\lim_{\xi\To \infty} K(x, y, \omega_{\xi}) & = & \lim_{\theta\To 0}K(x, y, \theta) \\
\lim_{\xi\To -\infty} K(x, y, \omega_{\xi}) & = & \lim_{\theta\To \pi} K(x, y, \theta).
\end{eqnarray*}
\end{lemma}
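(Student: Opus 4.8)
The plan is to establish the three limits separately, reducing each to an explicit computation with the kernel formulas of Proposition~\ref{prop:Martinkernel}; throughout I use that both members of each identity are positive $\calL$-harmonic functions normalized to equal $1$ at $(x_0,y_0)$, so it suffices to match them up to a positive constant.

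For the first limit I would simply send $\theta\To\pi/2$ in Proposition~\ref{prop:Martinkernel}(c). Since $\sin\theta\To1$, $\cos\theta\To0$, and the Gauss hypergeometric function is continuous in its parameters, $K(x,y,\theta)$ converges locally uniformly to a constant multiple of $\sinh^{3/2}(y)\hF{\frac12,1,2;-\sinh^2 y}$. It then remains to identify this with the profile of Proposition~\ref{prop:Martinkernel}(b). For this I would use the elementary closed form $\hF{\frac12,1,2;z}=\frac{2}{z}\left(1-\sqrt{1-z}\right)$; at $z=-\sinh^2 y$ it gives $\hF{\frac12,1,2;-\sinh^2 y}=\cosh^{-2}(y/2)$, and a one-line simplification shows that $\sinh^{3/2}(y)\cosh^{-2}(y/2)$ is a constant multiple of $(e^y-1)^2/(e^{y/2}\sqrt{e^{2y}-1})$. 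The constant cancels against the normalizations, giving the first identity.

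For the second limit the right-hand side, $\lim_{\theta\To0}K(x,y,\theta)$, is read off directly from Proposition~\ref{prop:Martinkernel}(c) as the normalization of $\sinh^{3/2}(y)\hF{\frac34,\frac34,2;-\sinh^2 y}e^{x/2}$. The left-hand side requires an asymptotic analysis of the integral in Proposition~\ref{prop:Martinkernel}(a) as $\xi\To+\infty$. Writing $g(s)=\sqrt{s^2+\frac14}$ and $h(s,y)=\frac{\Gamma(3/2+is)}{\Gamma(is)}e^{-isy}f(s,y)$, for large $\xi$ one has $\abs{x-\xi}=\xi-x$, and using the symmetry $h(-s,y)=\overline{h(s,y)}$ the numerator reduces to the real integral $N(x,y;\xi)=\int_{-\infty}^\infty \Re\!\left[h(s,y)\right]g(s)^{-1}e^{-(\xi-x)g(s)}\,ds$. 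Since $g$ attains its minimum $\tfrac12$ at $s=0$ with $g''(0)=2$ and grows linearly at infinity (so the integrand decays exponentially and Watson's lemma applies), this is a Laplace-type integral concentrating at $s=0$. The delicate point is that $\Re[h(s,y)]$ is even and vanishes to \emph{second} order there, so the leading term is governed by its coefficient of $s^2$. I would extract this coefficient from the key identity
\[
\Re\!\left[h(s,y)\right]=\frac{\sqrt\pi}{2}\,s\left(s^2+\tfrac14\right)\tanh(\pi s)\,e^{-y/2}\sinh^2 y\,\hF{\tfrac34-\tfrac{is}{2},\tfrac34+\tfrac{is}{2},2;-\sinh^2 y},
\]
which follows by comparing the coefficient of $\eta^2$ in the two expansions of $\Re k(s,y,\eta)$ as $\eta\To0$ --- the one obtained in the proof of Theorem~\ref{thm:asymGeta0} and the one coming from Lemma~\ref{lem:ksyeta}. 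Letting $s\To0$ yields $\Re[h(s,y)]\sim\frac{\pi^{3/2}}{8}\,e^{-y/2}\sinh^2 y\,\hF{\frac34,\frac34,2;-\sinh^2 y}\,s^2$, whence Watson's lemma gives $N(x,y;\xi)\sim c_2(y)\,e^{-(\xi-x)/2}\frac{\sqrt\pi}{2}(\xi-x)^{-3/2}$ with $c_2(y)=\frac{\pi^{3/2}}{4}e^{-y/2}\sinh^2 y\,\hF{\frac34,\frac34,2;-\sinh^2 y}$.

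Forming $N(x,y;\xi)/N(x_0,y_0;\xi)$ and letting $\xi\To\infty$, the algebraic factors cancel in the limit and the exponentials leave $e^{(x-x_0)/2}$, so $\lim_{\xi\To\infty}K(x,y,\omega_\xi)=\frac{\sqrt{1-e^{-2y_0}}}{\sqrt{1-e^{-2y}}}\frac{c_2(y)}{c_2(y_0)}e^{(x-x_0)/2}$. Using $\sqrt{1-e^{-2y}}=e^{-y/2}\sqrt{2\sinh y}$ one checks that $c_2(y)/\sqrt{1-e^{-2y}}$ is a constant multiple of $\sinh^{3/2}(y)\hF{\frac34,\frac34,2;-\sinh^2 y}$, so the limit agrees with $\lim_{\theta\To0}K(x,y,\theta)$ after normalization, proving the second identity. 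The third identity comes from the $\xi\To-\infty$ version of the same computation: now $\abs{x-\xi}=x-\xi\To+\infty$, the factor again concentrates at $s=0$, and the only change is that the surviving exponential becomes $e^{-(x-x_0)/2}$, which matches $\lim_{\theta\To\pi}K(x,y,\theta)$ (where $\cos\theta\To-1$); equivalently this reflects the symmetry $x\mapsto-x$. The main obstacle is precisely this Laplace analysis, because the amplitude $\Re[h]$ degenerates to order $s^2$ at the critical point; the whole argument hinges on the explicit identity for $\Re[h]$ furnished by Lemma~\ref{lem:ksyeta}, after which only routine asymptotics and algebra remain.
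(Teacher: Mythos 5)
Your proposal is correct and takes essentially the same route as the paper: the second and third limits are proved, exactly as in the paper's own proof, by substituting $t=\sqrt{s^2+\frac 1 4}$ and applying Watson's lemma to the Laplace-type integral concentrating at $s=0$, with the ratio of the resulting asymptotics leaving the factor $e^{(x-x_0)/2}$ (resp.\ $e^{-(x-x_0)/2}$). The only difference is one of explicitness: where the paper leaves the vanishing order $\alpha$ and the coefficient $h_0(y)$ abstract and identifies $(1-e^{-2y})^{-1/2}h_0(y)$ with the relevant ODE solution only in a follow-up remark, you compute $\alpha=2$ and $h_0$ explicitly from the identity furnished by the proof of Lemma \ref{lem:ksyeta} and match the hypergeometric profiles by hand (likewise your evaluation $\hF{\half,1,2;z}=\frac{2}{z}\left(1-\sqrt{1-z}\right)$ makes explicit the first identity, which the paper simply reads off from the defining formulas) --- a sharpening of the same argument rather than a different one.
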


\begin{proof}
The first identity follows from the defining equations of $K(x,y,\theta)$ and $K(x,y,\omega_\infty)$ in Proposition \ref{prop:Martinkernel}. Next we show the second identity. The third one follows by a similar argument. 

We assume that $\xi > x$ and denote $A = \xi -x> 0$. Let
\begin{eqnarray*}
I(A,y) = \int_0^\infty h(s,y) \frac{1}{\sqrt{s^2 + \frac 1 4}} e^{- A\sqrt{s^2 + \frac 1 4}}ds
\end{eqnarray*}
with
\[
h(s,y) = \Re\left\{\frac{\Gamma(3/2+i s)}{\Gamma(i s)} e^{- is y}f(s,y)\right\}.
\]
The Martin kernel can be written as 
\[
K(x, y, \omega_{\xi}) = \frac{\sqrt{1-e^{-2 y_0}}}{\sqrt{1-e^{-2 y}}}\frac{I(A, y)}{I(A,y_0)}.
\]
Denote by
\[
\Psi(x, y) = \lim_{\xi\To \infty} K(x, y, \omega_{\xi}).
\]
Since for any $\xi \in \Real$, $K(x, y, \omega_{\xi})$ is a positive $\calL$-harmonic function on $\Sigma^2$ and has value $1$ at $(x_0, y_0)$, so is $\Psi(x, y)$.

In the following we consider the asymptotic expansion of $I(A, y)$ as $A \to \infty$. Let $t = \sqrt{s^2 + \frac 1 4}$ and then $s= \sqrt{t^2 - \frac 1 4}$. The integral $I(A, y)$ can be written as
\[
I(A, y) = \int_{\frac 1 2}^\infty \frac{h(s,y)}{s} e^{- A t}dt.
\]
Suppose that $h(s,y)$ has the following expansion at $s = 0$:
\[
h(s,y) = s^{\alpha}\left(h_0(y)+ O(s)\right)
\]
with $h_0(y) \ne 0$ for $y > 0$, then we have
\[
\frac{h(s,y)}{s} = \left(t-\half\right)^{\frac{\alpha - 1}{2}}\left(h_0(y) + O\left(\sqrt{t-\half}\right)\right)
\]
and the following asymptotic expansion as $A \To \infty$ from Watson's lemma 
\begin{eqnarray*}
I(A, y) & = & e^{-\frac A 2} \frac{\Gamma((\alpha +1)/2)}{A^{\frac{\alpha +1 }{2}}}h_0(y)\left( 1+ O(A^{-1})\right) \\
& = & e^{\half x} e^{-\frac \xi 2}\frac{\Gamma((\alpha +1)/2)}{(\xi-x)^{\frac{\alpha +1 }{2}}}h_0(y)\left( 1+ O(A^{-1})\right).
\end{eqnarray*}
It follows that 
\[
\Psi(x, y) = e^{\half x}\frac{h_0(y)}{\sqrt{1-e^{-2 y}}}\frac{\sqrt{1-e^{-2 y_0}}}{h_0(y_0)e^{\half x_0}},
\]
and hence $\Psi(x, y) = K(x, y, 0)$.
\end{proof}
\begin{remark}
It can also be shown explicitly that the function $(1-e^{-2 y})^{-\half} h_0(y)$ solves the differential equation
\[
-w''(y) + P(y)w(y) = \frac 1 4 w(y) \quad \text{with}\quad w(0) = 0.
\]
\end{remark}

Recall the following definition of Martin compactification that is equivalent to the one with minimal Martin boundary in Section 2, see \cite[Definition 6.2]{Taylor}. Here we exclude the case where all positive solutions to $L w = 0$ are proportional. Note that a topological space is said to be $\sigma$-compact if it is the union of countably many compact subspaces. This property holds for any Riemannian manifold. In the notions of Martin compactification and Martin boundary, we drop the letter $L$.  
 
\begin{definition}\label{defn:MartincompTaylor}
Let $M$ be a complete smooth manifold that is $\sigma$-compact and $L$ be a second order strictly elliptic partial differential operator with smooth coefficients for which $L 1 \leq 0$. The Martin compactification  is the compactification $\hat M$ such that
\begin{enumerate}
\item the normalized Green functions, i.e., the Martin kernel functions $K(p, q)$, extend continuously to $\hat M$ for each $p \in M$; and
\item the extended functions separate the points of the ideal boundary $\partial \hat M = \hat M \backslash M$.
\end{enumerate} 
\end{definition}
The ideal boundary of the compactification satisfying condition (2) in Definition \ref{defn:MartincompTaylor} is the \emph{minimal Martin boundary} $\pd_e M$, see \cite[Remark I.7.7]{BorelJi}.

\begin{thm}\label{thm:MartincompSigma}
The Martin compactification of $\Sigma^2$ is homeomorphic to the half disk $\hat \Sigma = \set{(u,v)\in \Real^2 : u^2 + v^2 \leq 1, v\geq 0}$ and $\partial \hat \Sigma = \pd_e \Sigma$ is the minimal Martin boundary. Moreover we have
\begin{enumerate}
\item the real line $\Real\subset S_\infty(\Sigma)$ is identified with the interval $(-1,1)$ of $u$ with $v=0$, 
\item the vertical half line $\xi = \xi_0>0$ is identified with the point $(0,1)$, 
\item the asymptotic ray $\eta = \xi \tan \theta$ is identified with the semi-circle 
\[
\set{\omega_\theta = (\cos \theta, \sin \theta) : \theta \in (0, \pi/2)\cup (\pi/2, \pi)}\subset \pd \hat \Sigma,
\]
\item for any $\omega \in \partial \hat \Sigma \backslash \set{\omega_{\xi}}$ we have
\begin{equation}\label{eqn:Kxixomegaylim}
\lim_{(x, y)\To \omega}K(x, y, \omega_{\xi})  = 0
\end{equation}
\end{enumerate}
\end{thm}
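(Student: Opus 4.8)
The plan is to verify directly the two defining conditions of the Martin compactification in Definition \ref{defn:MartincompTaylor} for the closed half-disk $\hat\Sigma$, with $\Sigma$ sitting in its interior through the same coordinates used for the geometric compactification in Theorem \ref{thm:geometriccompactification}. Thus I parametrize $\pd\hat\Sigma$ so that the diameter $\set{v=0,\,-1<u<1}$ corresponds to the points $\omega_\xi=(\xi,0)$ on $\set{\eta=0}$, the top vertex $(0,1)$ to $\omega_\infty$, and the open semicircle to the rays $\omega_\theta$, with the corners $(\pm 1,0)$ arising as the common limits $\xi\To\pm\infty$ and $\theta\To 0,\pi$. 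It then suffices to show: (i) for each fixed $(x,y)\in\Sigma$ the normalized Green function $(\xi,\eta)\mapsto K(x,y,\xi,\eta)$ extends continuously to $\hat\Sigma$; and (ii) the resulting boundary kernels separate the points of $\pd\hat\Sigma$. Granting (i) and (ii), Definition \ref{defn:MartincompTaylor} and the remark following it identify $\hat\Sigma$ with the Martin compactification and $\pd\hat\Sigma$ with the minimal Martin boundary $\pd_e\Sigma$, while the identifications (1)--(3) are read off from the parametrization.

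The boundary values of the extension in (i) are exactly the kernels computed in Proposition \ref{prop:Martinkernel}(a)--(c). The real content is to promote the path-wise asymptotics of Theorems \ref{thm:asymGeta0}, \ref{thm:asymGetainfty} and \ref{thm:asymGreenrays} to genuine continuity, i.e.\ to show that every sequence $(\xi_n,\eta_n)$ converging to a boundary point $\omega$ in the half-disk topology satisfies $K(x,y,\xi_n,\eta_n)\To K(x,y,\omega)$. For $\omega$ on the open semicircle this should follow from the fact that the steepest-descent expansion in Theorem \ref{thm:asymGreenrays} is uniform for the slope $m=\eta/\xi$ in compact subsets of $(0,\infty)$, so the convergence is locally uniform and $\theta\mapsto K(\cdot,\omega_\theta)$ is continuous; the analogous uniformity at $\set{\eta=0}$ and, via Theorem \ref{thm:asymGetainfty}, at the vertex $(0,1)$ handles those parts. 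The matching across the vertex and the two corners, where two different parametrizations of $\pd\hat\Sigma$ meet, is precisely the content of Lemma \ref{lem:Kxixomegalim}, which gives $\lim_{\theta\To\pi/2}K(\cdot,\omega_\theta)=K(\cdot,\omega_\infty)$, $\lim_{\xi\To+\infty}K(\cdot,\omega_\xi)=\lim_{\theta\To0}K(\cdot,\omega_\theta)$ and $\lim_{\xi\To-\infty}K(\cdot,\omega_\xi)=\lim_{\theta\To\pi}K(\cdot,\omega_\theta)$. I expect this continuity step to be the main obstacle: the expansions of Section 5 are derived along specific families of curves, and the work lies in controlling arbitrary approaches and in gluing the two parametrizations at the corners.

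Condition (ii) is comparatively routine from the explicit formulas. Each $K(x,y,\omega_\theta)$ factors as a function of $y$ times the pure exponential $\exp\left(\frac{\cos\theta}{2}x\right)$; since $\cos$ is injective on $(0,\pi)$, distinct $\theta$ yield distinct $x$-growth rates, hence distinct functions, and $\theta=\pi/2$ recovers the $x$-independent kernel $K(\cdot,\omega_\infty)$. On the other hand each $K(x,y,\omega_\xi)$ is a superposition whose $x$-profile is peaked, through the factor $e^{-\abs{x-\xi}\sqrt{s^2+1/4}}$, at the distinct value $\xi$; in particular it is not a pure exponential in $x$, so it is distinguished both from the other $\omega_{\xi'}$ and from all semicircle kernels. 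Therefore the boundary kernels are pairwise distinct and (ii) holds.

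Finally, for statement (4) I must show $K(x,y,\omega_\xi)\To 0$ as $(x,y)\To\omega$ for every $\omega\in\pd\hat\Sigma\setminus\set{\omega_\xi}$. Since $K(x,y,\omega_\xi)=\frac{\sqrt{1-e^{-2y_0}}}{\sqrt{1-e^{-2y}}}\,I(x,y)/I(x_0,y_0)$ with $I(x,y)=\int_{-\infty}^{\infty}\frac{\Gamma(3/2+is)}{\Gamma(is)}e^{-isy}f(s,y)\frac{e^{-\abs{x-\xi}\sqrt{s^2+1/4}}}{\sqrt{s^2+1/4}}\,ds$ and $I(x_0,y_0)$ a fixed positive constant, it is enough to control $I(x,y)$ in three regimes. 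When $\abs{x}\To\infty$ (approach to a corner, or to a semicircle point $\omega_\theta$ with $\theta\ne\pi/2$) the factor $e^{-\abs{x-\xi}\sqrt{s^2+1/4}}$ forces exponential decay of $I$. When $x$ stays bounded and $y\To\infty$ (approach to $\omega_\infty$), $I(x,y)$ is an oscillatory integral in $s$ of frequency $y$ with absolutely integrable amplitude, hence vanishes by the Riemann--Lebesgue lemma. When $y\To 0$ with $x\To\xi'\ne\xi$ (approach to another real-line point), the expansion used in the proof of Theorem \ref{thm:asymGeta0} gives $I(x,y)=O(y^2)$, which beats the $(1-e^{-2y})^{-1/2}=O(y^{-1/2})$ blow-up of the prefactor and yields $K(x,y,\omega_\xi)=O(y^{3/2})\To 0$. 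This establishes (4) and completes the proof.
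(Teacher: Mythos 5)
Your proposal is correct in outline and shares most of the paper's skeleton: both arguments rest on Definition \ref{defn:MartincompTaylor}, take the boundary kernels from Proposition \ref{prop:Martinkernel}, glue the two boundary parametrizations at the vertex and the corners via Lemma \ref{lem:Kxixomegalim}, and verify (\ref{eqn:Kxixomegaylim}) by the same asymptotic mechanisms (exponential decay in $\abs{x-\xi}$, a Riemann--Lebesgue argument as $y\To\infty$, and the $O(y^2)$ expansion beating the $O(y^{-1/2})$ prefactor as $y\To 0$). The genuinely different choice is your treatment of the bottom boundary $\set{\omega_{\xi}}$. The paper does not argue there from the explicit formulas alone: it observes that $\calL$ is coercive and that the strip $\Real\times(0,1]$ has curvature pinched between two negative constants, and invokes Ancona's localization theorem \cite[Corollary 16]{Ancona} to conclude at once that each $\omega_{\xi}$ is a \emph{minimal} Martin boundary point, that it has a neighborhood basis of geodesic cones matching the topology of $\tilde{\Sigma}$, and that (\ref{eqn:Kxixomegaylim}) holds at the other points of $\set{y=0}$. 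You replace all of this by the $O(y^2)$ computation (which the paper deploys only for the corners, though it indeed needs only $x$ bounded away from $\xi$) together with the abstract characterization in Definition \ref{defn:MartincompTaylor}. Your route is more self-contained, but it shifts the entire burden onto the joint continuity of $(\xi,\eta)\mapsto K(x,y,\xi,\eta)$ up to $\set{\eta=0}$, i.e.\ uniformity in $\xi$ of the expansion of Theorem \ref{thm:asymGeta0}, which you rightly flag as the main outstanding step and which Ancona's theorem lets the paper bypass near the bottom. Two smaller repairs: the separation of distinct $\omega_{\xi}$, $\omega_{\xi'}$ by ``peakedness'' should be made checkable --- e.g.\ note that $K(\cdot,\omega_{\xi})$ depends on $x$ only through $\abs{x-\xi}$ and is nonconstant in $x$, or combine (\ref{eqn:Kxixomegaylim}) with the maximum principle as in the remark following the theorem; and the integral formula for $K(x,y,\omega_{\xi})$ is only valid for $x\ne\xi$, so approaches to $\omega_\infty$ with $x\To\xi$ need a word (Harnack, or continuity of the kernel in $x$).
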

\begin{proof}
Denote $S_\infty = S_\infty(\Sigma)$. First note that $\calL = \Delta - 1$ is coercive, see \cite[p. 498]{Ancona}. Since any $\omega_{\xi}$($\xi \in \Real$) can be approached by the geodesic $\gamma(t) = (\xi, t)$ with $t\in (0, 1]$ and the curvature of the subset $\Real \times (0, 1] \subset \Sigma$ is pinched by two negative constants,  it follows from \cite[Corollary 16]{Ancona} that $\omega_{\xi}$ is a minimal point on the Martin boundary and the limit in equation (\ref{eqn:Kxixomegaylim}) for $\omega \in S_{\infty} \backslash \set{\omega_{\xi}}$. In the Martin compactification each $\omega_{\xi}$ admits a basis of neighborhoods consisting of geodesic cones that agree with the basis of neighborhood in the topology of $\tilde{\Sigma}$. It follows that the collection of all $\omega_{\xi}$ that is $\Real \subset S_\infty$ embeds into the Martin compactification. 

The previous Lemma \ref{lem:Kxixomegalim} shows that Martin kernel functions $K(x, y, \xi, \eta)$ extend continuously to $\hat \Sigma$ for any $(x, y)\in \Sigma$, and it is clear to see that extended functions separate the points of $\hat \Sigma$. From Definition \ref{defn:MartincompTaylor} $\hat \Sigma$ is the Marin compactification and $\partial \hat \Sigma$ is the minimal Martin boundary.

In the following we show equation (\ref{eqn:Kxixomegaylim}) for other points $\omega \in \pd \hat \Sigma$. If $\omega = \omega_\theta$ is an asymptotic ray $y = x \tan \theta$ with $\theta \in (0, \pi)$, then we have $y \To \infty$ as $(x, y)$ approaches $\omega_\theta$. Note that $f(s, y)\To 1$ as $y \To \infty$. The rest part of the integral with $y$ in the numerator of $K(x, y, \omega_{\xi})$ is the Fourier coefficient of the following $L^2(-\infty, \infty)$-function in $s$:
\[
\frac{\Gamma(3/2 + is)}{\Gamma(is)} \frac{e^{-\abs{x-\xi}\sqrt{s^2 + \frac 1 4}}}{\sqrt{s^2 + \frac 1 4}}.
\]
So it converges to zero as $y \To \infty$ and thus we have
\[
\lim_{(x,y) \To \omega_\theta} K(x, y, \omega_{\xi}) = 0.
\]

 Next we consider the case when $y \To 0$ and $\abs{x}\To \infty$. We assume that $x\ne \xi$ when $x \To \pm \infty$. We have the following Taylor expansions:
\[
\sqrt{1-e^{-2y}} = \sqrt 2 y^{\half} + O\left(y^{\frac 3 2}\right)
\]
and
\[
\Re \frac{\Gamma(3/2 + is)}{\Gamma(is)} e^{- i s  y}f(s,  y) = \frac{\sqrt \pi}{2} s \left(s^2 + \frac 1 4\right)\tanh(\pi s)  y^2 + O( y^3).
\]
It follows that 
\begin{eqnarray*}
& & \int_{-\infty}^\infty \frac{\Gamma(3/2 + is)}{\Gamma(is)} e^{- i s  y}f(s,  y) \frac{e^{-\abs{x-\xi}}\sqrt{s^2 + \frac 1 4}}{\sqrt{s^2 + \frac 1 4}}ds \\
& = & 2\int_0^\infty \Re \frac{\Gamma(3/2 + is)}{\Gamma(is)} e^{- i s  y}f(s,  y) \frac{e^{-\abs{x-\xi}}\sqrt{s^2 + \frac 1 4}}{\sqrt{s^2 + \frac 1 4}}ds \\
& = & O( y^2)
\end{eqnarray*}
and hence $K(x, y, \omega_{\xi})$ converges to zero as $ y \To 0$. This finishes the proof of Theorem \ref{thm:MartincompSigma}.
\end{proof}

\begin{remark}
It follows from Theorem \ref{thm:geometriccompactification} that the Martin compactification $\hat \Sigma$ is homeomorphic to the geometric compactification $\tilde \Sigma$. The Martin kernel function is given by $K(x,y, 0)$ and $K(x,y,\pi)$ along the geodesics asymptotic to $\eta = \log \xi$ ($\xi \To \infty$) and $\eta = \log(-\xi)$ ($\xi \To -\infty$) respectively.
\end{remark}
\begin{remark}
From equation (\ref{eqn:Kxixomegaylim}) we know that 
\[
\lim_{(x, y)\To \omega_{\xi}} K(x, y, \omega_{\xi}) \ne 0
\]
if the limit exists. Otherwise, $K(x, y, \omega_{\xi})$ is the trivial function by the maximal principle which contradicts the fact that $K(x_0, y_0, \omega_{\xi}) = 1$. 
\end{remark}

We finish this section by showing Corollary \ref{cor:Dirichlet}.

\begin{proof}[Proof of Corollary \ref{cor:Dirichlet}]
We argue by contradiction. Assume that the Dirichlet problem at infinity is solvable for the Laplace operator $\Delta$. Let $f\geq 0$ be a continuous function on $S_\infty(\Sigma)$ such that $f = 0$ on the part $\set{v = 0}$ and $f>0$ on the part $\set{\omega_\theta : 0 < \theta < \pi}$. Then there is a harmonic function $F$ on $\Sigma^2$ such that the following properties hold: 
\begin{enumerate}
\item $F(x, y) = 0$ as $y$ approaches $0$;
\item $F(x,y) = f(\omega_{\frac \pi 2})$ as $(x,y)$ approaches infinity along the type (ii) geodesic, i.e., $x$ fixed and $y \To \infty$;
\item $F(x,y) = f(\omega_\theta)$ as $(x,y)$ approaches infinity along the type (iv) geodesic.
\end{enumerate}
It follows that $F$ is bounded and positive on $\Sigma^2$ by the maximum principle. Since $(\Sigma^2, ds^2)$ is conformal to the half-plane with the hyperbolic metric $g_H$, $F$ is also a positive harmonic function with respect to the metric $g_H$. Since $F$ vanishes on the boundary $\set{y=0}$, it is a positive constant multiple of $h_{\infty} = y$, the Martin kernel function at $\infty \in \pd_\Delta \Sigma$, see for example \cite[Remark 4.1]{Ancona}. This contradicts the previous conclusion that $F$ is bounded. 
\end{proof}

\begin{remark}\label{rem:Dirichletuniqueness}
From the proof of Corollary \ref{cor:Dirichlet}, it is easy to see that the Dirichlet problem at infinity for the Laplace operator $\Delta$ has a unique solution only when $f \in C^0\left(S_\infty (\Sigma)\right)$ is constant on the semi-circle $\set{\omega_{\theta} : 0 \leq \theta \leq \pi}$.
\end{remark}

\medskip

\section{Proof of Theorem \ref{thm:uniquenessWpos}}
 
\begin{proof}[Proof of Theorem \ref{thm:uniquenessWpos}]
We assume that $W$ is not the trivial solution so that $W$ is positive on $\Sigma$. From Theorem \ref{thm:Martinintegralrep}, there is a unique Borel measure $\nu$ on $\pd \hat \Sigma$ with $\nu(\pd \hat \Sigma) = 1$ such that 
\begin{equation}\label{eqn:Wintegral}
W(x, y) = \int_{\pd \hat \Sigma} K(x, y, \omega) d\nu(\omega).
\end{equation} 
By Proposition \ref{prop:Martinkernel} and Lemma \ref{lem:Kxixomegalim}, the proof now follows from the following
\begin{claim}
The measure $\nu$ is supported by the one-point set $\set{\theta = \pi/2}$, i.e., 
\[
\nu\left(\set{\theta = \frac \pi 2}\right) = 1.
\]
\end{claim}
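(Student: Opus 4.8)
The plan is to locate the support of $\nu$ by reading off the boundary behaviour of the kernels $K(\cdot,\omega)$ established in Section 6, and to use the two hypotheses on $W$ to eliminate every point of $\pd\hat\Sigma$ except the apex $\omega_\infty$ (the point $\theta=\pi/2$). Starting from the representation $W=\int_{\pd\hat\Sigma}K(\cdot,\omega)\,d\nu(\omega)$, I would first dispose of the flat arc $\set{\omega_\xi:\xi\in\Real}$. By Theorem \ref{thm:MartincompSigma} each $\omega_\xi$ is a minimal point reached by a geodesic running into $\set{y=0}$, and by \eqref{eqn:Kxixomegaylim} the kernel $K(\cdot,\omega_{\xi_0})$ fails to vanish only at $\omega_{\xi_0}$ itself while every other kernel tends to $0$ there. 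Hence, exactly as in the proof of Theorem \ref{thm:MartincompSigma} (using the coercivity of $\calL$ and the negative pinching of the curvature on $\Real\times(0,1]$), a positive mass of $\nu$ near some $\omega_{\xi_0}\in\set{y=0}$ would force a positive boundary trace of $W$ at $\omega_{\xi_0}$, contradicting the hypothesis $W=0$ on $\set{y=0}$. This shows $\nu$ is carried by the closed semicircle $\set{\omega_\theta:\theta\in[0,\pi]}$, which I henceforth parametrize by $\theta$.

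Next I would exploit the growth hierarchy of the surviving kernels. By Proposition \ref{prop:Martinkernel}(c) and the Remark following it, along the vertical line $x=a$ one has $K(a,y,\omega_\theta)\sim C_\theta(a)\,e^{\frac{\sin\theta}{2}y}$ as $y\To\infty$ for $\theta\in(0,\pi)$, with merely logarithmic growth at the two corners, whereas at the apex $K(a,y,\omega_\infty)=A(y_0)W_0(y)\sim A(y_0)\,e^{y/2}$. Thus $\theta=\pi/2$ is the unique boundary point whose kernel attains the maximal rate $e^{y/2}$. Writing $\psi(y)=e^{-y/2}W(a,y)=\int_0^\pi e^{-y/2}K(a,y,\omega_\theta)\,d\nu(\theta)$, the integrand converges pointwise to $A(y_0)\,\mathbf{1}_{\set{\theta=\pi/2}}$, and (as one checks from the explicit kernel) it is bounded by a $\nu$-integrable function uniformly in $y$; dominated convergence then yields $\lim_{y\To\infty}\psi(y)=A(y_0)\,\nu(\set{\theta=\pi/2})$. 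Since hypothesis \eqref{eqn:Wgrowth} gives $\psi(y)\ge e^{-b/2}W(a,b)>0$ for $y\ge b$, this limit is positive, so $\nu(\set{\theta=\pi/2})>0$.

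It remains to show that $\nu$ charges no other point of the semicircle, and this is the crux. The growth hypothesis enters here in its sharp form: from $\pd_y W-\half\coth(y)W\ge0$ one computes $\pd_y\!\left(e^{-y/2}W\right)=e^{-y/2}\!\left(\pd_y W-\half W\right)\ge\half(\coth y-1)e^{-y/2}W\ge0$, so $\psi$ is non-decreasing and therefore $\psi(y)\le A(y_0)\,\nu(\set{\theta=\pi/2})$ for every $y$. On the other hand, each term $e^{-y/2}K(a,y,\omega_\theta)$ with $\theta\ne\pi/2$ rises and then decays to $0$; so any positive mass off the apex would make $\psi$ overshoot its limit and subsequently decrease, contradicting monotonicity. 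Hence $\nu$ is supported on $\set{\theta=\pi/2}$, and being a probability measure it equals the point mass there. The main obstacle is precisely this final step: with only the integrated inequality \eqref{eqn:Wgrowth} in place of the differential one, $\psi$ need not be monotone, and excluding off-apex mass then requires a more delicate Tauberian analysis of the $\theta$-integral concentrated near $\theta=\pi/2$, together with the uniform (Watson-type) estimates needed to justify interchanging the limit and the integration.
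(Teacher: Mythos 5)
Your overall strategy --- kill the flat arc using $W|_{\set{y=0}}=0$, then use the fact that $K(\cdot,\omega_{\pi/2})$ is the unique kernel with the extremal growth rate $e^{y/2}$ to concentrate $\nu$ at the apex --- is exactly the paper's, but both halves of your implementation have genuine gaps. For the flat arc, your mechanism is wrong as stated: you claim that positive $\nu$-mass \emph{near} some $\omega_{\xi_0}$ forces a positive boundary trace of $W$ at $\omega_{\xi_0}$, but by (\ref{eqn:Kxixomegaylim}) every kernel $K(\cdot,\omega_{\xi})$ with $\xi\neq\xi_0$ tends to $0$ at $\omega_{\xi_0}$, so a non-atomic measure on $\Real$ could a priori produce a positive $\calL$-harmonic function whose pointwise boundary limit vanishes at \emph{every} point of $\set{y=0}$; turning ``mass near $\xi_0$'' into ``positive trace at $\xi_0$'' is a Fatou-type statement that is neither in the paper nor proved by you. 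The paper avoids this entirely: it takes a compact interval $[-r,r]$ of positive mass, forms $h^*(x,y)=\int_{-r}^{r}K(x,y,\omega_{\xi})\,d\nu(\xi)$, shows $h^*\To 0$ as $\abs{x}+y\To\infty$ (uniformly over the compact parameter set), and then observes that if $h^*$ also vanished as $y\To 0$ the maximum principle for $\calL=\Delta-1$ would force $h^*\equiv 0$, contradicting $h^*(x_0,y_0)=\nu([-r,r])>0$. You need some argument of this kind, not a boundary-trace claim.

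For the semicircle, you have in effect proved the theorem under the hypothesis $\pd_yW-\half\coth(y)W\ge 0$, which the paper explicitly flags as a \emph{strictly stronger} assumption than (\ref{eqn:Wgrowth}), and you concede at the end that without it the monotonicity of $\psi(y)=e^{-y/2}W(a,y)$ --- the engine of your exclusion step --- is unavailable and the argument does not close. That concession is the gap: the claim as stated assumes only (\ref{eqn:Wgrowth}). The paper's route is to set $J(x,y,\theta)=\pd_yK(x,y,\theta)-\half K(x,y,\theta)$, differentiate under the integral to get $\pd_yW-\half W=\int_0^{\pi}J(0,y,\theta)e^{a\cos\theta/2}\,d\nu(\theta)$, and use that $J(0,y,\theta)\To-\infty$ for $\theta\neq\pi/2$ while $J(0,y,\pi/2)\To 0^{+}$, so that any mass off the apex makes $\pd_yW-\half W$ strictly negative for large $y$, contradicting the growth condition in its differential form (\ref{eqn:Wgrowthde}), which the paper asserts is equivalent to (\ref{eqn:Wgrowth}). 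Two further points you only assert and would need to justify: the dominating function for your limit $\psi(y)\To A(y_0)\nu(\set{\theta=\pi/2})$ requires the Watson-lemma asymptotics of $K(a,y,\omega_\theta)$ to be uniform in $\theta$ over the closed semicircle (including the corners, where the growth degenerates to logarithmic), and your ``overshoot'' step needs the quantitative comparison that the off-apex contribution $O(e^{(\sin\theta-1)y/2})$ dominates the deficit $A(y_0)\nu(\set{\theta=\pi/2})-e^{-y/2}K(a,y,\omega_{\pi/2})\nu(\set{\theta=\pi/2})=O(e^{-y})$; neither is automatic.
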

We show the claim in two steps: first $\nu\left(\Real\right) = 0$ by the boundary condition $W(x,0) = 0$ for all $x\in \Real$, and then $\nu$ is concentrated at $\theta = \pi/2$ by the inequality (\ref{eqn:Wgrowth}). 

For the first step, we follow a similar argument in \cite[p. 65]{Brawn}. It is sufficient to show that if a finite Borel measure $\nu$ on $\Real$ has $\nu (\Real) > 0$, then
\[
h(x, y) = \int_{\Real} K(x, y, \omega_{\xi})d\nu (\xi) 
\]
cannot vanish on the $x$-axis. When $\nu(\Real)> 0$, there is a number $r> 0$ such that the $\nu$-measure of the interval $[-r,r]$ is positive. Let 
\[
h^*(x,y) = \int_{-r}^r K(x, y, \omega_{\xi})d\nu(\xi).
\]
Then $h^*$ is non-negative and $\calL$-harmonic on $\Sigma^2$. It is majorized by $h$, i.e., $h^*\leq h$ on $\Sigma^2$. Since $h^*(x_0, y_0) = \nu_0 = \nu([-r,r])>0$, $h^*$ is positive on $\Sigma^2$. From Theorem \ref{thm:MartincompSigma} we have $K(x, y, \omega_{\xi}) \To 0$ as $\abs{x} + y \To \infty$. Let $\set{(x_j, y_j)}_{j=1}^\infty$ be a sequence of points such that $\abs{x_j}+ y_j \To \infty$ as $j \To \infty$. For fixed $j$ since $K(x_j, y_j, \omega_{\xi})$ is continuous in $\xi$, it achieves the maximal value on $[-r,r]$. Let $\xi_j \in [-r, r]$ with
\[
K(x_j, y_j, \omega_{\xi_j}) = \max_{\xi \in [-r,r]}K(x_j, y_j,\omega_{\xi})
\]
and so we have
\[
h^*(x_j, y_j)\leq \nu_0 K(x_j, y_j, \omega_{\xi_j})
\]
It follows from the compactness of $[-r,r]$ that the limit of $K(x_j,y_j,\omega_{\xi_j})$ is zero as $j\To \infty$. So we have $h^*(x_j, y_j) \To 0$ as $j \To \infty$, i.e., $h^*(x, y) \To 0$ as $\abs{x} + y \To \infty$. It follows that $h^*(x,y)$ cannot vanish on the $x$-axis. Otherwise we would have $h^*(x, y) = 0$ everywhere that contradicts the fact $h^*(x_0,y_0)> 0$. So the function $h(x, y)$ cannot vanish on the $x$-axis. Therefore, the Borel measure $\nu$ in the claim must have $\nu\left(\Real\right) = 0$ because $W(x,0) = 0$ for all $x\in \Real$.

Now we proceed to the second step. Since $\nu(\Real) = 0$, the integral formula (\ref{eqn:Wintegral}) can be written as
\[
W(x, y) = \int_0^{\pi} K(x, y, \theta) d\nu(\theta).
\]
Inequality (\ref{eqn:Wgrowth}) is equivalent to 
\begin{equation}\label{eqn:Wgrowthde}
\pd_y W(a, y) - \half W(a, y)\geq 0 \quad \text{for all}\quad y \geq b.
\end{equation}
Let 
\[
J(x, y,\theta) = \pd_{y} K(x, y, \theta) - \half K(x, y, \theta). 
\]
Then, we have
\[
J(x,y,\theta) =J(0, y, \theta) e^{\frac{\cos\theta}{2} x}.
\]
It follows that 
\begin{eqnarray}\label{eqn:dWintegral}
\pd_y W(a, y) - \half W(a, y) & = & \int_0^\pi J(a, y, \theta) d\nu (\theta) \nonumber \\
& = & \int_0^\pi J(0, y, \theta) e^{\frac{a\cos\theta}{2}} d\nu(\theta).
\end{eqnarray}
We may assume that $a=0$, otherwise $e^{\frac{a\cos\theta}{2}}d\nu(\theta)$ defines another finite non-negative Borel measure $\tilde{\nu}$ on the semi-circle and the argument below also holds using the measure $\tilde{\nu}$.

From the asymptotic expansion of $K(0, y, \theta)$ in Remark 6.2, we have
\[
J\left(0, y, \frac\pi 2\right) > 0 \quad \text{and} \quad \lim_{y \To \infty} J\left(0, y,  \frac\pi 2\right) = 0.
\]
Also, for $\theta \ne \frac \pi 2$,
\[
J(0, y, \theta) \To -\infty \text{ at least linearly, as }y \To \infty.
\]
Assume that $\nu$ is not concentrated at $\theta = \frac \pi 2$. Then there exists some $\eps \in (0,\pi/2)$ such that $\nu(I_{\eps}) = \nu_0 > 0$ with $I_\eps = [0, \frac \pi 2 - \eps)\cup (\frac \pi 2 + \eps, \pi]$.
From the asymptotic behavior of $J(0, y, \theta)$, there exist $\delta> 0$ and $N > 0$ such that when $y > N$ we have
\begin{eqnarray*}
J(0, y, \theta) < - \delta & \text{for} & \theta \in I_\eps \\
J(0, y, \theta) < \nu_0 \delta & \text{for} & \theta \in [0, \pi] - I_\eps.
\end{eqnarray*}
The integral formula (\ref{eqn:dWintegral}) yields
\begin{eqnarray*}
\pd_y W(0, y) - \half W(0, y) & < & - \delta \nu_0 + \nu_0 \delta(1-\nu_0) \\
& = & - \nu_0^2 \delta 
\end{eqnarray*}
when $y > N$. This contradicts inequality (\ref{eqn:Wgrowthde}) for $y \geq \max\set{N, b}$ and we finish the proof.
\end{proof}

\medskip{}

\appendix

\section{Singular Sturm-Liouville problem and a Spectral  Theorem}\label{sec:STspectrum}

In this section we consider the Sturm-Liouville problem of the operator $- D_x^2 + P(x)$ on $(0, \infty)$, i.e., the spectrum of this operator. It is related to the Green's function $G_2(y,\eta, \lambda)$ of the operator $L_2 = D_y^2 - P(y) + \lambda$ in Section 4. For the basics of Sturm-Liouville problem, see for example \cite{Joergens} and \cite{Kodaira} and the references therein.

\smallskip

Recall the positive function
\[
P(x) = \frac{e^{4x} + 10 e^{2x}+1}{4(e^{2x}-1)^2}
\]
and consider the following 2nd order differential operator 
\begin{equation}\label{eqn:Adefn}
\left(A w\right)(x) = - w''(x) + P(x) w(x) 
\end{equation}
with the domain
\[
D(A) = \set{w \in L^2(0, \infty) : w \text{ and } w' \in \acloc(0, \infty), A w \in L^2(0, \infty)}.
\]
Here $\acloc(0,\infty)$ stands for the set of all locally absolutely continuous functions on $(0, \infty)$. Equation $A w = 0$ has the following two linearly independent solutions
\[
\frac{e^{x/2}}{\sqrt{e^{2x}-1}}\quad \text{and} \quad \frac{e^{2x}+1}{e^{x/2}\sqrt{e^{2x}-1}}.
\]
Since the second function is not square-integrable near $x=0$ and $x \To \infty$, we have limit-point case for $x =0$ and $x = \infty$. In particular, the operator $A$ with domain $D(A)$ is self-adjoint.

The differential equation $A w = \lambda w$ for any $\lambda \in \Cpx$ is solved in Appendix B and the two linearly independent solutions are denoted by $w_1(x)$ and $w_2(x)$, see Proposition \ref{prop:wfdmsolutions}. Note that a number $\lambda \in \Cpx$ is in the discrete spectrum $\sigma_d(A)$ of $A$ if the equation $A w = \lambda w$ has a solution $w\in L^2(0,\infty)$. 

\begin{thm}\label{thm:Adiscretespectrum}
The discrete spectrum $\sigma_d(A)$ of $A$ is empty.
\end{thm}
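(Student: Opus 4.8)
The plan is to split the real axis of candidate eigenvalues at the threshold $\lambda = \half$... more precisely at $\lambda=\frac14$, the value to which $P$ tends at infinity, and to rule out each regime by a different mechanism. Since $A$ is self-adjoint, every $\lambda\in\sigma_d(A)$ is real (even though the definition allows $\lambda\in\Cpx$), so it suffices to treat real $\lambda$. The one computational input I would extract at the outset is the elementary identity
\[
P(x) - \frac 1 4 = \frac{e^{4x}+10e^{2x}+1 - (e^{2x}-1)^2}{4(e^{2x}-1)^2} = \frac{3 e^{2x}}{(e^{2x}-1)^2} > 0 \qquad (x>0),
\]
which says the potential stays strictly above its asymptotic value $\frac14$.

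For $\lambda \le \frac14$ I would argue by positivity of the quadratic form. If $Aw=\lambda w$ with $0\ne w\in L^2(0,\infty)$, then $w\in D(A)$, is smooth, and (limit-point case at both endpoints) contributes no boundary terms when one multiplies $Aw=\lambda w$ by $\bar w$ and integrates by parts. This gives
\[
0 = \int_0^\infty\!\left(\abs{w'}^2 + (P(x)-\lambda)\abs{w}^2\right)dx \ge \int_0^\infty\!\left(\abs{w'}^2 + \frac{3e^{2x}}{(e^{2x}-1)^2}\abs{w}^2\right)dx,
\]
using $\lambda\le\frac14$ together with the identity above. Both integrands are nonnegative and the weight $\frac{3e^{2x}}{(e^{2x}-1)^2}$ is strictly positive, so $w\equiv 0$, a contradiction. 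Hence no eigenvalue lies in $(-\infty,\frac14]$; the strict positivity of $P-\frac14$ is exactly what also disposes of the threshold $\lambda=\frac14$.

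For $\lambda>\frac14$ the form inequality is no longer decisive, so I would instead use the explicit solutions. From the definitions of $w_1,w_2$ and the facts that $(e^{2y}-1)^{-\half}\sim e^{-y}$ and the hypergeometric factors tend to $1$ as $y\To\infty$, one reads off $w_1(\lambda,y)\sim e^{-\alpha y}$ and $w_2(\lambda,y)\sim e^{\alpha y}$, with relative error $1+O(e^{-2y})$. For real $\lambda>\frac14$ the quantity $\alpha=\alpha(\lambda)$ is purely imaginary, $\alpha=-i\sqrt{\lambda-\frac14}$, so every solution behaves at infinity like an oscillatory combination $c_1 e^{-i\beta y}+c_2 e^{i\beta y}$ with $\beta=\sqrt{\lambda-\frac14}>0$. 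Its square modulus has mean value $\abs{c_1}^2+\abs{c_2}^2$, which is positive unless $c_1=c_2=0$; hence no nontrivial solution is square integrable near infinity, and there is no $L^2$ eigenfunction for $\lambda>\frac14$. Combining the two regimes yields $\sigma_d(A)=\emptyset$.

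The part I expect to require the most care is precisely this region $\lambda\ge\frac14$, the exclusion of eigenvalues embedded in (or at the bottom of) the continuous spectrum; the subthreshold part is essentially free once $P-\frac14>0$ is in hand. I would also note that the subthreshold conclusion can alternatively be read off from the analyticity of $G_2(y,\eta,\lambda)$ on the principal branch established earlier, since an eigenvalue below $\frac14$ would force a pole of the resolvent kernel there. One should, however, resist detecting eigenvalues purely through the Wronskian relation $W(w_a,w_b)=-2\alpha\,a(\lambda)$: the apparent zeros of $a(\lambda)$ at positive integer $\alpha$ are illusory, because $w_2$ degenerates at those points and the product $a(\lambda)w_2$ must be read as a limit, so that route demands extra bookkeeping that the form and asymptotic arguments sidestep entirely.
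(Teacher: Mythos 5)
Your proof is correct, and for the subthreshold regime it takes a genuinely different route from the paper. For $\lambda>\frac14$ the two arguments coincide: both read off from $w_{1,2}(\lambda,x)\sim e^{\mp\alpha x}$ with $\alpha=-is$ purely imaginary that every solution is asymptotically a nontrivial oscillatory combination, hence not square integrable at infinity. For $\lambda\le\frac14$, however, the paper works entirely with the explicit special-function solutions, splitting into three sub-cases ($\alpha$ non-integer and $\ne\half$, with a further sub-split at half-integers where $w_2$ degenerates to a Jacobi polynomial; $\alpha=\half$, i.e.\ $\lambda=0$; and $\alpha=m\in\Zeit_{\ge0}$, where it switches to the Legendre pair $P^m_{1/2},Q^m_{1/2}$ and invokes the logarithmic blow-up of $Q^m_{1/2}$ at $z=1$) and checks non-integrability at one endpoint or the other in each case. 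Your variational argument, resting on the single identity $P(x)-\frac14=3e^{2x}/(e^{2x}-1)^2>0$, disposes of all of $(-\infty,\frac14]$ at once, including the threshold $\lambda=\frac14$ and the delicate half-integer/integer values of $\alpha$, with no special functions at all; what it gives up is the finer information the paper's casework incidentally records about which fundamental solution fails to be $L^2$ at which endpoint (information reused in the proof of Theorem A.2). The one step you should tighten is the claim that the limit-point property by itself kills the boundary terms in $\int_0^\infty \bar w\,(Aw)\,dx=\int_0^\infty\left(\abs{w'}^2+P\abs{w}^2\right)dx$: the limit-point condition directly controls the skew form $u\bar v'-u'\bar v$, not $w'\bar w$ separately. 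The clean justification is that, being in the limit-point case at both endpoints, $A$ is the unique self-adjoint realization and therefore coincides with the Friedrichs extension of the minimal operator, whose form representation yields exactly the quadratic-form identity you use; with that (standard) remark inserted, the argument is complete.
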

\begin{proof}
Since $A$ is self-adjoint, the spectrum $\sigma(A)$ is in $\Real$. We separate our discussion into different intervals of $\Real$. When $\lambda = 0$, the two linearly independent solutions of $A w = 0$ are not square integrable near $x = 0$. In the following we assume that $\lambda \ne 0$.
\smallskip

\noindent \textbf{Case 1}: $\lambda > \frac 1 4$. Write $\lambda = \frac 1 4 + s^2$ with $s>0$. Then the linearly independent solutions of $A w = \lambda w$ are given by
\begin{eqnarray*}
w_1(x) = \frac{e^{(1-i s)x}}{\sqrt{e^{2x}-1}}\hF{-\half, -\half + i s, 1+ i s; e^{-2x}}
\end{eqnarray*}
and $w_2(x) = \overline{w_1(x)}$, the complex conjugate. Since 
\[
\hF{-\half, -\half + i s, 1+ is; 0} = 1
\]
we have the following asymptotic expansion as $x\To \infty$:
\[
w_1(x) \sim \cos(sx) - i \sin(sx)
\]
and so no linear combination of $w_1(x)$ and $w_2(x)$ is square-integrable near $x = \infty$.

\noindent \textbf{Case 2}: $\lambda = \frac 1 4 - \alpha^2$ with $\alpha>0$ not an integer and $\alpha \ne \half$. When $\alpha$ is not a half integer, neither $w_1(x)$ nor $w_2(x)$ in Proposition \ref{prop:wfdmsolutions} is square integrable near $x = 0$. If $\alpha$ is a half integer, then $w_2(x)$ is given by the Jacobi polynomial. The solution $w_1(x)$ is not square integrable near $x=0$. When $x\To \infty$, we also have
\[
w_2(x) \sim e^{\alpha x}
\]
which is not square integrable. 

\noindent \textbf{Case 3}: $\lambda = \frac 1 4 - m^2$ with $m =0,1,2, \cdots$. The first solution $w_1(x)=P^m_\half\left(\frac{1+e^{-2x}}{1-e^{-2x}}\right)$ is not square integrable near $x=0$. The second solution is given by
\[
w_2(x) = Q^m_\half\left(\frac{1+e^{-2x}}{1-e^{-2x}}\right).
\]
with $w_2(0) = 0$. As $x\To \infty$, we have $w_2(x)$ approaches $Q^m_\half(1)$ which is unbounded by the asymptotic expansion $Q^m_\half(z)$ at $z = 1$. This finishes all the cases and proves Theorem \ref{thm:Adiscretespectrum}.
\end{proof}

For a complex number $z = re^{i\phi}$ with $r \geq 0$ and $\phi\in (-\pi, \pi]$, the principal square root is given by $\sqrt{z} = \sqrt{r} e^{i\frac{\phi}{2}}$. Recall the following functions in (\ref{eqn:as}) and (\ref{eqn:wsy}) of Theorem \ref{thm:Greenintegral}:
\begin{equation*}
a(s) = - \frac{\Gamma(1- i s)\Gamma(\frac 3 2 + i s)}{\Gamma(1+i s)\Gamma(\frac 3 2 - i s)}
\end{equation*}
and
\begin{equation*}
w(s, x) = \frac{e^{(1- i s)x}}{\sqrt{e^{2x}-1}} \hF{-\half, -\half + is, 1+ is; e^{-2x}}
\end{equation*}
for $s, x\in [0, \infty)$.

\begin{thm}\label{thm:expansionS}
For any function $h \in L^2(0, \infty)$ we have 
\begin{equation}
h(x) = \int_0^\infty K(x,y) h(y) dy,
\end{equation}
where the kernel is given by
\begin{equation}\label{eqn:Kxy}
K(x,y) = \frac{1}{\pi} \int_0^\infty \Re\set{a(s) w(s,x) w(s,y) + \bar w(s,x) w(s,y)}ds.
\end{equation}
\end{thm}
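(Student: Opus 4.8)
The plan is to recognize $K(x,y)$ as the integral kernel of the spectral resolution of the self-adjoint operator $A$ and to read off the completeness relation $\int_0^\infty K(x,y)h(y)\,dy = h(x)$ from Stone's formula. Since $A$ is in the limit-point case at both endpoints it is self-adjoint, and by Theorem \ref{thm:Adiscretespectrum} its discrete spectrum is empty; moreover the defining relation $-L_2 = A - \lambda$ shows that $G_2(x,y,\lambda)$ is precisely the integral kernel of the resolvent $(A-\lambda)^{-1}$. From Section 4 the map $\lambda\mapsto G_2(x,y,\lambda)$ is analytic on $\Cpx\setminus[\tfrac14,\infty)$ with a branch cut along $[\tfrac14,\infty)$, so the spectrum of $A$ is purely continuous and equals $[\tfrac14,\infty)$. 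First I would invoke Stone's formula for the spectral projections: for $h\in L^2(0,\infty)$,
\[
h(x) = \lim_{\eps\To 0^+}\frac{1}{2\pi i}\int_{1/4}^\infty\int_0^\infty\left[G_2(x,y,\lambda+i\eps)-G_2(x,y,\lambda-i\eps)\right]h(y)\,dy\,d\lambda,
\]
where the $\lambda$-integral is confined to $[\tfrac14,\infty)$ because $G_2$ has no jump below $\tfrac14$.

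Next I would compute the jump across the cut, reusing the boundary values already obtained in the proof of Theorem \ref{thm:Greenintegral}. Writing $\lambda = \tfrac14 + s^2$ with $s>0$, the two determinations of $\alpha(\lambda)=\sqrt{\tfrac14-\lambda}$ on the upper and lower edges are $\alpha(\lambda+i0)=-is$ and $\alpha(\lambda-i0)=is$, so the formulas on $\Gamma_1$ and $\Gamma_2$ give
\[
G_2(x,y,\lambda+i0)=-\frac{1}{2is}\left(w(s,x)\bar w(s,y)+\bar a(s)\bar w(s,x)\bar w(s,y)\right)
\]
and
\[
G_2(x,y,\lambda-i0)=\frac{1}{2is}\left(\bar w(s,x)w(s,y)+a(s)w(s,x)w(s,y)\right).
\]
Using $\bar a(s)=a(-s)$ and $\bar w(s,x)=w(-s,x)$, the jump collapses to
\[
G_2(x,y,\lambda+i0)-G_2(x,y,\lambda-i0)=-\frac{1}{is}\Re\left\{a(s)w(s,x)w(s,y)+\bar w(s,x)w(s,y)\right\}.
\]
Finally, with $d\lambda=2s\,ds$ and $\frac{1}{2\pi i}\cdot\bigl(-\tfrac{1}{is}\bigr)\cdot 2s=\tfrac1\pi$, the double integral becomes $\int_0^\infty K(x,y)h(y)\,dy$ with $K$ exactly the kernel in the statement, which is the assertion.

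The main obstacle is the rigorous justification of this limiting-absorption step: interchanging $\lim_{\eps\To0}$ with the $\lambda$- and $y$-integrations and controlling $G_2(x,y,\lambda\pm i\eps)$ uniformly near the bottom of the continuous spectrum $s=0$ (where $\alpha\To0$ and $a(\lambda)\To1$, so the factor $1/(2\alpha a(\lambda))$ in (\ref{eqn:G2w1w2}) is singular) and as $s\To\infty$. A further subtlety is that the $s$-integral defining $K(x,y)$ does not converge absolutely, since its integrand grows like $s^{3/2}$ (the remark following Theorem \ref{thm:asymGeta0}); the identity must therefore be read as an iterated integral, integrating $h$ against $w(s,\cdot)$ first, or equivalently in the $L^2$/spectral sense. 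I would handle all of this by first establishing the identity on the dense class $h\in C_c^\infty(0,\infty)$, where the asymptotics of $a(s)$ and $w(s,x)$ from Lemma \ref{lem:asymptoticGammaF} make the inner integral decay rapidly in $s$ and permit Fubini and dominated convergence; the $\tanh(\pi s)$ factor exhibited in Lemma \ref{lem:ksyeta} shows the spectral density vanishes to high order at $s=0$, removing the apparent singularity there. Once the spectral-transform map is shown to be bounded (indeed isometric up to the constant $1/\pi$) into $L^2$, the completeness identity extends from $C_c^\infty$ to all of $L^2(0,\infty)$ by continuity, yielding the theorem. The only remaining care is the orientation bookkeeping that fixes the overall sign to $+1/\pi$, which is identical to the computation already carried out for the two-dimensional Green's function in Theorem \ref{thm:Greenintegral}.
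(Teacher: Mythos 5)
Your proposal is correct, and it reaches the kernel by a genuinely different (though closely related) route from the paper. The paper does not use Stone's formula directly: it runs the Weyl--Kodaira machinery from \cite{Joergens} and \cite{Kodaira}, choosing a regular point $c$, forming the fundamental system $u_1,u_2$ normalized at $c$, computing the connection matrix $(m_{jk})$ to $w_a=w_1+a(\lambda)w_2$ and $w_b=w_1$, then the characteristic matrix $(G_{jk})$ and the spectral matrix $(\rho_{jk})$, and finally verifying by a $2\times 2$ matrix computation that $\sum_{j,k}u_j(s,x)\rho_{jk}'(s)u_k(s,y)$ collapses to $\frac1\pi\Re\set{a(s)w(s,x)w(s,y)+\bar w(s,x)w(s,y)}$; the restriction of the spectral measure to $[\tfrac14,\infty)$ falls out because $(m_{jk})$ is real for $\lambda\le\tfrac14$, and the expansion theorem itself is quoted from \cite[12.10]{Joergens}. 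Your route instead reads the spectral density off the jump of the resolvent kernel $G_2$ across the cut, reusing the boundary values $\alpha=\mp is$ already computed on $\Gamma_1,\Gamma_2$ in the proof of Theorem \ref{thm:Greenintegral}; your sign and Jacobian bookkeeping ($d\lambda=2s\,ds$, net factor $1/\pi$) is right, and since Kodaira's $\rho_{jk}'$ is by construction $\frac1\pi$ times the boundary value of $\Im G_{jk}$, the two computations are two faces of the same identity. What the paper's route buys is that all the analytic content you flag as "the main obstacle" --- the limiting absorption, the interchange of limits, the fact that the $s$-integral converges only in the iterated/$L^2$ sense, the behavior at the spectral bottom $s=0$ and the extension from a dense class to all of $L^2$ --- is delegated wholesale to the cited expansion theorem, at the cost of the matrix algebra; your route makes the kernel appear immediately and more transparently, but to be complete it would have to carry out the density/Parseval argument you only sketch (your observations that the $\tanh(\pi s)$ factor in Lemma \ref{lem:ksyeta} kills the apparent singularity at $s=0$, and that Theorem \ref{thm:Adiscretespectrum} together with Case 1 of its proof rules out embedded eigenvalues so that Stone's formula gives the full identity resolution, are exactly the right ingredients). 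Either way the substance is the same: the solutions $w_a,w_b$, the Wronskian $-2\alpha a(\lambda)$, and their boundary values on $[\tfrac14,\infty)$.
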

\begin{proof}
Suppose $\lambda$ is a complex number on the upper half plane, i.e., $\Im \lambda > 0$. Denote $\alpha = \sqrt{\frac 1 4 - \lambda}$ the principal square root of $\frac 1 4 -\lambda$, see also equation (\ref{eqn:alphaphi}). Since $\alpha$ is not a real number, by Proposition  \ref{prop:wfdmsolutions} the two linearly independent solutions are given by
\begin{eqnarray*}
w_1(\lambda, x) & = & e^{(1-\alpha)x}\left(e^{2x}-1\right)^{-\half}\hF{-\half, -\half +\alpha, 1+\alpha; e^{-2x}}\\
w_2(\lambda, x) & = & e^{(1+\alpha)x}\left(e^{2x}-1\right)^{-\half}\hF{-\half, -\half -\alpha, 1-\alpha; e^{-2x}}.
\end{eqnarray*}
The Wronskian of $w_1$ and $w_2$ is given by
\[
W(w_1, w_2) = w_1(\lambda, x)\pd_x w_2(\lambda, x) - w_2(\lambda, x) \pd_x w_1(\lambda, x) = 2\alpha.
\]

Let 
\begin{eqnarray*}
w_a(\lambda, x) & = & w_1 + a(\lambda) w_2 \\
w_b(\lambda, x) & = & w_1
\end{eqnarray*}
with
\[
a(\lambda) = - \frac{\hF{-\half, -\half + \alpha, 1+\alpha; 1}}{\hF{- \half, -\half -\alpha, 1-\alpha;1}} = - \frac{\Gamma(1+\alpha)\Gamma(\frac 3 2 - \alpha)}{\Gamma(1-\alpha)\Gamma(\frac 3 2 + \alpha)}
\]
then $w_a$ is square integrable near $x=0$. When $x$ approaches $\infty$ we have the following expansions,
\[
w_1(x) \sim e^{-\alpha x} \quad \text{and} \quad w_2(x) \sim e^{\alpha x}.
\]
Write $\alpha^2 = \frac 1 4 - \lambda = r e^{i \phi}$. We have $\Im(\alpha^2) < 0$ and then $\phi \in (-\pi, 0)$. It follows that $\frac{\phi}{2} \in (-\pi/2, 0)$ and $\alpha = \sqrt r \cos(\phi/2) + i \sqrt r \sin(\phi/2)$. So the real part $\Re \alpha> 0$. It follows that any square integrable solution near $x= \infty$ is a multiple of $w_b$.

We follow the construction in \cite[Section 12]{Joergens} (see also \cite{Kodaira}). Take a regular point $c\in (0, \infty)$ and consider the fundamental system $u_1(\lambda, x )$ and $u_2(\lambda, x)$ with the following boundary conditions: 
\begin{eqnarray*}
u_1(\lambda, c) = 1 & & \pd_x u_1(\lambda,c) =0; \\
u_2(\lambda, c) = 0 & & \pd_x u_2(\lambda,c) =1. 
\end{eqnarray*}
So the matrix $(m_{jk}(\lambda))$ is given by
\begin{eqnarray*}
m_{11}(\lambda) = w_a(\lambda, c) & & m_{12} (\lambda) = \pd_x w_a(\lambda, c), \\
m_{21}(\lambda) = w_b(\lambda, c) & & m_{22} (\lambda) = \pd_x w_b(\lambda, c)
\end{eqnarray*}
and
\begin{eqnarray*}
m(\lambda) & = & \det (m_{jk}) = W(w_a , w_b) \\
& = & W(w_1 + a(\lambda) w_2, w_1) \\
& = & - a(\lambda) W(w_1, w_2) \\
& = & 2\alpha \frac{\Gamma(1+\alpha)\Gamma(\frac 3 2 - \alpha)}{\Gamma(1-\alpha) \Gamma(\frac 3 2 + \alpha)}\\
& = & 2\alpha a(\lambda).
\end{eqnarray*}
The characteristic matrix $\left(G_{jk}(\lambda)\right)$ and the matrix function $\left(\rho_{jk}(\lambda)\right)$ are defined for $\lambda \in \Real$ as in \cite[Section 12]{Joergens}. If $\lambda \leq \frac 1 4$, then $\alpha \geq 0$ is a real number. So $(m_{jk})$ and then $(G_{jk})$ are real matrices and thus $\rho_{jk} = 0$ ($j,k = 1,2$). Note that the discrete spectrum $\sigma_d(A)$ is empty by Theorem \ref{thm:Adiscretespectrum}. 

Let 
\[
s= \sqrt{\lambda - \frac 1 4} > 0.
\]
Then we have $\alpha= - i s$ and 
\[
a(s^2 + 1/4) = a(s), \quad w_2(s^2 + 1/4,x) = w(s,x).
\]
For the fixed $c\in (0, \infty)$, let 
\[
b_1(s) = w(s,c) \quad \text{and}\quad b_2(s) = \pd_x w(s,x)|_{x= c}.
\]
Since $w_1(s^2 + \frac 1 4, x) = \overline{w_2(s^2 + \frac 1 4, x)} = \bar w(s,x)$, we have
\begin{eqnarray*}
\overline{b_1(s)} b_2(s) - b_1(s) \overline{b_2(s)} = W(w_1, w_2) = - 2 i s.
\end{eqnarray*}
Denote $m_{jk}(s^2+1/4)$ by $m_{jk}(s)$. Then we have
\begin{equation*}
\begin{pmatrix}
m_{11}(s) & m_{12}(s) \\
m_{21}(s) & m_{22}(s)
\end{pmatrix}
=
\begin{pmatrix}
\overline{b_1(s)} + a(s) {b_1(s)} & \overline{b_2(s)} + a(s) b_2(s) \\
\overline{b_1(s)} & \overline{b_2(s)}
\end{pmatrix},
\end{equation*}
and
\[
m(s)= m(s^2 + 1/4) =2 i s a(s).
\]
So the fundamental system is given by
\begin{equation*}
\begin{pmatrix}
u_1(s, x) \\
\\
u_2 (s, x)
\end{pmatrix}
=
\frac{i}{2s}
\begin{pmatrix}
-\overline{b_2(s)} & b_2(s) \\
& \\
\overline{b_1(s)} & - b_1(s) 
\end{pmatrix}
\begin{pmatrix}
w(s,x) \\
\\
\bar{w}(s,x)
\end{pmatrix}.
\end{equation*}
The characteristic matrix $\left(G_{jk}\right)$ is given by
\begin{eqnarray*}
G_{11} & = & \frac{i \left(\overline{b_1(s)}^2 + a(s) \abs{b_1(s)}^2\right)}{2sa(s)} \\
G_{12} & = & \frac{i\left(\overline{b_1(s)} \,\overline{b_2(s)} + a(s) b_1(s)\overline{b_2(s)}\right)}{2s a(s)} \\
G_{21} & = & \frac{i\left(\overline{b_1(s)}\, \overline{b_2(s)} + a(s)\overline{b_1(s)} {b_2(s)}\right)}{2s a(s)} \\
G_{22} & = & \frac{i \left(\overline{b_2(s)}^2+ a(s) \abs{b_2(s)}^2\right)}{2 s a(s)}
\end{eqnarray*}
and then 
\begin{eqnarray*}
\rho'_{11}(s) & =& \frac{1}{\pi}\left(\abs{b_1(s)}^2+ \Re \frac{b_1(s)^2}{\overline{a(s)}}\right) \\
\rho'_{12}(s) = \rho'_{21}(s) & = & \frac{1}{\pi}\Re \left(\overline{b_1(s)}b_2(s) + \frac{b_1(s)b_2(s)}{\overline{a(s)}}\right) \\
\rho'_{22}(s) & = & \frac{1}{\pi}\left(\abs{b_2(s)}^2 + \Re \frac{b_2(s)^2}{\overline{a(s)}}\right).
\end{eqnarray*}
Note that $\abs{a(s)}^2=1$, so we have
\[
\left(\rho'_{jk}(s)\right) = \frac{1}{2\pi}
\begin{pmatrix}
b_1 & \bar b_1 \\
b_2 & \bar b_2
\end{pmatrix}
\begin{pmatrix}
a & 1 \\
1 &  \bar a
\end{pmatrix}
\begin{pmatrix}
b_1 &  b_2 \\
\bar b_1 & \bar b_2
\end{pmatrix}.
\]
It follows that 
\begin{eqnarray*}
\sum_{j,k=1}^2 u_j(s,x) \rho_{jk}'(s) u_k(s,y)
& = & 
\begin{pmatrix}
u_1 & u_2 
\end{pmatrix}
\begin{pmatrix}
\rho'_{11} & \rho'_{12} \\
\rho'_{21} & \rho'_{22}
\end{pmatrix}
\begin{pmatrix}
u_1 \\
u_2 
\end{pmatrix} \\
& = & - \frac{1}{8 \pi s^2}
\begin{pmatrix}
w(s,x) & \bar w(s,x)
\end{pmatrix} \cdot \\
& & 
\begin{pmatrix}
-\bar b_2 & \bar b_1 \\
b_2 & - b_1 
\end{pmatrix}
\begin{pmatrix}
b_1 & \bar b_1 \\
b_2 & \bar b_2 
\end{pmatrix}
\begin{pmatrix}
a & 1 \\
1 & \bar a
\end{pmatrix}
\begin{pmatrix}
b_1 & b_2 \\
\bar b_1 & \bar b_2
\end{pmatrix} \cdot
 \\
& & \begin{pmatrix}
- \bar b_2 &  b_2 \\
\bar b_1 & - b_1
\end{pmatrix} 
\begin{pmatrix}
w(s,y) \\
\bar w(s,y)
\end{pmatrix}.
\end{eqnarray*}
A straightforward computation shows that the product of the five $2\times 2$-matrices above equals to
\begin{eqnarray*}
- 4 s^2
\begin{pmatrix}
a & 1\\
1 & \bar a
\end{pmatrix}.
\end{eqnarray*}
So we have
\begin{eqnarray*}
\sum_{j,k=1}^2 u_j(s,x) \rho'_{jk}(s) u_k(s,y) & = & \frac{1}{2\pi}
\begin{pmatrix}
w(s,x) & \bar w(s,x)
\end{pmatrix}
\begin{pmatrix}
a(s) & 1 \\
1 & \bar a(s)
\end{pmatrix}
\begin{pmatrix}
w(s,y) \\
\bar w(s,y)
\end{pmatrix} \\
& = &\frac{1}{\pi}\Re \left\{a(s)  w(s,x) w(s,y) + \bar w(s, x) w(s,y)\right\}.
\end{eqnarray*}
Now the eigenfunction expansion follows from \cite[12.10]{Joergens} or \cite[Theorem 1.13]{Kodaira}.
\end{proof}

\medskip{}

\section{A second order linear ordinary differential equation}

In this section we solve equation $A w = \lambda w$ given in (\ref{eqn:Adefn}) using special functions. First, let us recall the following special functions:
\begin{itemize}
\item The hypergeometric function 
\[
\hF{a,b,c; z} = \, _2F_1(a,b,c;z)
\]
solves the hypergeometric differential equation
\[
z(1-z)w''(z) + \left[c- (a+b+1)z\right] w'(z) - ab w(z) = 0
\]
with constants $a, b, c\in \Cpx$. 
\item The associated Legendre functions $P_\nu^\mu(z)$ and $Q_\nu^\mu(z)$ solve the  equation
\[
(1-z^2) w''(z) - 2z w'(z) + \left[\nu(\nu +1) - \frac{\mu^2}{1-z^2}\right] w(z) = 0
\]
with constants $\mu, \nu \in \Cpx$.
\item The Jacobi polynomial $\Jp_n^{(\alpha, \beta)}(z)$ is a solution of the  equation  
\[
(1-z^2) w''(z) + \left(\beta - \alpha - (\alpha + \beta + 2)z\right) w'(z) + n\left(n+ \alpha + \beta + 1\right) w(z) = 0
\]
with positive integer $n$ and constants $\alpha, \beta \in \Cpx$.
\end{itemize}
For detailed discussions and properties of these special functions, see \cite{DLMF} and \cite{OLBC}.

\begin{proposition}\label{prop:wfdmsolutions}
Let $\lambda \in \Cpx$. The general solution to the equation 
\begin{equation}\label{eqn:wdd}
w''(x) +\left(- \frac{e^{4x} + 10 e^{2x} +1}{4(e^{2x}-1)^2} + \lambda\right) w(x) = 0
\end{equation}
is given by $w = c_1 w_1 + c_2 w_2$, where $c_1, c_2\in \Cpx$ are constants, and $w_1$ and $w_2$ are defined for $x \in (0, \infty)$ as follows: 
\begin{enumerate}
\item If $\lambda = \frac 1 4 - \alpha^2$ with non-integer $\alpha$ and $\alpha \ne \half$, then we have
\begin{eqnarray*}
w_1(x) & = & \frac{e^{(1-\alpha)x}}{\left(e^{2x} - 1\right)^{\half}} \hF{- \half, -\half + \alpha, 1+\alpha; e^{-2x}} \\
w_2(x) & = & \frac{e^{(1+\alpha)x}}{\left(e^{2x} - 1\right)^{\half}} \hF{- \half, -\half - \alpha, 1-\alpha; e^{-2x}}.
\end{eqnarray*}

\item If $\lambda = \frac 1 4 - m^2$ with $m=0, 1,2, \ldots$, then we have
\begin{eqnarray*}
w_1(x) & = & P^m_\half\left(\frac{1+e^{-2x}}{1-e^{-2x}}\right) \\
w_2(x) & = & Q^m_\half\left(\frac{1+e^{-2x}}{1-e^{-2x}}\right),
\end{eqnarray*}
with $w_2(0) = 0$ and
\[
w_1(x) = \frac{\Gamma(3/2+m)}{\Gamma(3/2-m)\Gamma(1+m)}\frac{e^{(1-m)x}}{(e^{2x}-1)^\half}\hF{-\half, -\half +m, 1+m; e^{-2x}}.
\]
\end{enumerate}
\end{proposition}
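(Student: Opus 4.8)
The plan is to reduce equation (\ref{eqn:wdd}) to a classical hypergeometric or Legendre equation through a single change of variable, after first simplifying the potential. The key preliminary step is an algebraic identity: writing $u = e^{2x}$ and using $u^2 + 10u + 1 = (u-1)^2 + 12u$, one finds
\[
\frac{e^{4x} + 10e^{2x} + 1}{4(e^{2x}-1)^2} = \frac14 + \frac{3e^{2x}}{(e^{2x}-1)^2} = \frac14 + \frac{3}{4\sinh^2 x}.
\]
Thus (\ref{eqn:wdd}) is the Schr\"odinger-type equation $w'' + \left(\lambda - \tfrac14 - \tfrac34 \sinh^{-2} x\right) w = 0$. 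Setting $\tfrac14 - \lambda = \alpha^2$ with $\alpha = \alpha(\lambda)$ as in (\ref{eqn:alphaphi}), this reads $w'' - \left(\alpha^2 + \tfrac34 \sinh^{-2} x\right) w = 0$ and depends on $\lambda$ only through $\alpha^2$; the resulting symmetry $\alpha \mapsto -\alpha$ will manufacture the second solution from the first.

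For case (1) I would obtain the hypergeometric solutions directly. Substituting $z = e^{-2x} \in (0,1)$, so that $\sinh^{-2} x = 4z/(1-z)^2$, $w_x = -2z w_z$ and $w_{xx} = 4z^2 w_{zz} + 4z w_z$, followed by the Frobenius ansatz $w = z^{\alpha/2}(1-z)^{-1/2} F(z)$ --- which is exactly $w = e^{(1-\alpha)x}(e^{2x}-1)^{-1/2} F$ --- reduces the equation to the hypergeometric equation for $F$ with parameters $a = -\half$, $b = -\half + \alpha$, $c = 1 + \alpha$. Equivalently, these parameters are read off from the Riemann scheme, the indicial exponents being $\pm \alpha/2$ at $z = 0$ and $-\half, \tfrac32$ at $z = 1$. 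Choosing the Frobenius solution $F = \hF{-\half, -\half + \alpha, 1 + \alpha; z}$ gives $w_1$, and replacing $\alpha$ by $-\alpha$ gives $w_2$. These are well defined and linearly independent precisely when neither $1 + \alpha$ nor $1 - \alpha$ is a non-positive integer and the exponents $\pm\alpha/2$ differ by a non-integer, i.e. when $\alpha \notin \Zeit$; the value $\alpha = \half$ is set aside only because it corresponds to $\lambda = 0$, already treated at the start of Appendix \ref{sec:STspectrum}. The Wronskian $W(w_1, w_2) = 2\alpha \neq 0$ computed in the proof of Theorem \ref{thm:expansionS} confirms independence.

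For case (2), where $\alpha = m \in \set{0,1,2,\ldots}$, the substitution $\alpha \mapsto -\alpha$ fails (it forces $c = 1 - m$, a pole of the hypergeometric normalization) and the two Frobenius solutions at $z = 0$ coalesce, so a genuinely different second solution is needed. Here I would instead use $\zeta = \coth x = \tfrac{1+e^{-2x}}{1-e^{-2x}} \in (1,\infty)$. Since $d\zeta/dx = 1 - \zeta^2$, the chain rule gives $w_{xx} = (1-\zeta^2)^2 w_{\zeta\zeta} - 2\zeta(1-\zeta^2) w_\zeta$, and with $1 - \zeta^2 = -\sinh^{-2} x$ the equation becomes the associated Legendre equation of degree $\nu = \half$ (from $\nu(\nu+1) = \tfrac34$) and order $\mu = \alpha = m$ (from $\mu^2 = \tfrac14 - \lambda$). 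Its standard independent solutions are $P^m_\half(\coth x)$ and $Q^m_\half(\coth x)$, giving $w_1$ and $w_2$. I would then record $w_2(0) = 0$ from $Q^m_\half(\zeta) \to 0$ as $\zeta \to \infty$ (i.e. $x \to 0$), and derive the displayed hypergeometric expression for $w_1 = P^m_\half$ from the representation \cite[14.3.6]{DLMF} together with the Euler transformation already used in the proof of Lemma \ref{lem:connectionformula}: under $\zeta = \coth x$ one has $\half - \half \zeta = (1 - e^{2x})^{-1}$ and $\tfrac{\zeta + 1}{\zeta - 1} = e^{2x}$, so the hypergeometric argument becomes $e^{-2x}$ after the transformation.

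The main obstacle I anticipate is precisely this degenerate integer case. The continuation of the case-(1) solution $w_1$ remains valid there (its parameter $c = 1 + m$ is a positive integer, so no pole occurs), but pinning down the exact constant $\Gamma(3/2+m)/(\Gamma(3/2-m)\Gamma(1+m))$ in the identity $w_1 = P^m_\half(\coth x)$ requires careful tracking of the Gamma-function prefactors and of the branch choices for $(e^{2x}-1)^{-1/2}$ and $(\tfrac{\zeta+1}{\zeta-1})^{\mu/2}$ across the connection and transformation formulas; likewise the endpoint behaviour of $Q^m_\half$ must be read off from its expansion at $\zeta = 1$. By contrast, once the substitution in case (1) is performed, the reduction to the hypergeometric equation and the identification of $(a,b,c)$ are entirely mechanical.
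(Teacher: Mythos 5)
Your proposal is correct and follows essentially the same route as the paper: for non-integer $\alpha$ the ansatz $w = e^{(1-\alpha)x}(e^{2x}-1)^{-1/2}F(e^{-2x})$ reduces (\ref{eqn:wdd}) to the hypergeometric equation with parameters $(-\half,-\half+\alpha,1+\alpha)$ and the $\alpha\mapsto-\alpha$ symmetry supplies the second solution, while for $\alpha=m\in\Zeit_{\geq 0}$ the substitution $\zeta=\coth x$ yields the associated Legendre equation with $\nu=\half$, $\mu=m$, giving $P^m_\half$ and $Q^m_\half$ exactly as in the paper's Appendix B. The preliminary rewriting of the potential as $\tfrac14+\tfrac34\sinh^{-2}x$ is a pleasant (and correct) streamlining that the paper leaves implicit, but it does not change the argument.
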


\begin{remark}
(a) In Case (1), if $\alpha = i s$ is a pure imaginary number, then $w_1$ and $w_2$ are complex conjugate to each other. 

(b) In Case (1), if $\alpha = \half + m$ with $m = 1, 2, 3, \ldots$ then the hypergeometric function $\hF{-\half, -1-m, \half-m; z}$ reduces to the Jacobi polynomial $\Jp_n^{\alpha,\beta}$ as
\[
\hF{-\half, -1-m, \half -m; z} = \frac{(m+1)!}{(\frac 1 2- m)_{m+1}}\Jp_{m+1}^{-m - \half,-2}(1-2z).
\]
In particular we have $\hF{-\half, -1-m, \half - m; 1} = 0$. 

(c) In Case (1), two linearly fundamental solutions can also be given by associated Legendre functions $P^\alpha_\half$ and $Q^\alpha_\half$. Note that in this case $Q^\alpha_\half$ is a linear combination of $w_1(x)$ and $w_2(x)$ given in the proposition. 
\end{remark}

\begin{proof}
We start with the first case as $\lambda = \frac 1 4 - \alpha^2$. Let
\begin{equation}\label{eqn:transhy}
h(x) = e^{-\alpha x}\left(e^{2x} - 1\right)^{\half} w(x).
\end{equation}
Then equation (\ref{eqn:wdd}) can be written as 
\begin{equation}\label{eqn:hyperGFh}
\left(1-e^{2x}\right)h''(x) + 2(c-1-(a+b)e^{2x}) h'(x) - 4 ab e^{2x} h(x) = 0,
\end{equation}
that is the hypergeometric DE in $z = e^{2x}$ with $a = -\frac 1 2 + \alpha$, $b = -\frac 1 2$ and $c = \alpha + 1$. When $\alpha$ is not an integer, it has the fundamental solutions 
\begin{eqnarray*}
h_1(x) & = & e^{(1- 2\alpha) x}\hF{- \half, -\half + \alpha, 1+\alpha; e^{-2x}} \\
h_2(x) & = & e^{x} \hF{-\half, - \half - \alpha, 1-\alpha; e^{-2x}}
\end{eqnarray*}
with $e^{-2x}\in (0, 1]$. Since $a+b - c= -2 < 0$, $h_1$ and $h_2$ are absolutely convergent at $e^{-2x} = 1$, i.e., $x = 0$. By the transformation formula (\ref{eqn:transhy}), we have the fundamental solutions $w_1$ and $w_2$ in Case (1). 

When $\alpha=m$ with $m=0, 1, 2, \ldots$, let 
\[
z = \frac{1+e^{-2x}}{1- e^{-2x}} \in (1,\infty).
\]
Then equation (\ref{eqn:wdd}) is equivalent to the following Legendre differential equation,  
\begin{equation}\label{eqn:LegendreODE}
(1-z^2) w''(z) - 2 z w'(z) + \left(\frac 3 4 - \frac{m^2}{1-z^2}\right) w = 0.
\end{equation}
Its fundamental solutions are given by associated Legendre functions $P_{\half}^{m}(z)$ and $Q_{\half}^{m} (z)$ with the  Wronskian
\[
W\left(P_\half^m (z), Q_\half^m(z)\right) = \frac{(-1)^m \Gamma(3/2+m)}{(1-z^2)\Gamma(3/2-m)} \ne 0.
\]
So we have
\begin{eqnarray*}
w_1(x) & = & P_\half^m (z) = \frac{\Gamma(3/2+m)}{\Gamma(3/2-m)}P^{-m}_\half (z)\\
& = & \frac{\Gamma(3/2+m)}{\Gamma(3/2-m)}\frac{\sqrt{z+1}}{\sqrt 2 \Gamma(m+1)} \left(\frac{z-1}{z+1}\right)^\frac{m}{2} \hF{-\half, -\half + m, 1+m; \frac{z-1}{z+1}} \\
& = & \frac{\Gamma(3/2+m)}{\Gamma(3/2-m)\Gamma(1+m)}\frac{e^{(1-m)x}}{(e^{2x}-1)^\half}\hF{-\half, -\half +m, 1+m; e^{-2x}},
\end{eqnarray*}
which agrees with the first solution in the previous case. The function $Q_\half^m(z)$ can also be represented by hypergeometric function as
\begin{eqnarray*}
w_2(x) & = & Q_\half^m(z) \\
& = & \frac{(-1)^m \sqrt{\pi}\Gamma(3/2+m)}{2\sqrt 2}\frac{(z^2 - 1)^\frac{m}{2}}{z^{\frac 3 2 +m}}\hF{\frac m2 + \frac 3 4, \frac m 2 +\frac 5 4, 2; \frac{1}{z^2}}.
\end{eqnarray*}
Since $z\To \infty$ as $x \To 0$, we have $w_2(0) = 0$. 
\end{proof}

\medskip



\end{document}